\theoremstyle{plain}
\newtheorem{theorem}{Theorem}[section]
\newtheorem*{theorem*}{Theorem}
\newtheorem{lemma}[theorem]{Lemma}
\newtheorem*{lemma*}{Lemma}
\newtheorem{corollary}[theorem]{Corollary}
\newtheorem*{corollary*}{Corollary}
\newtheorem{claim}[theorem]{Claim}
\newtheorem{remark}[theorem]{Remark\textbf{}}
\newtheorem*{remark*}{Remark}
\newtheorem{proposition}[theorem]{Proposition}
\theoremstyle{definition}
\newtheorem{definition}[theorem]{Definition}
\def\G{\mathcal{G}}
\def\mD{\mathbb{D}}
\def\deg{\mathrm{deg}}
\def\ds{\rule{0pt}{1.5ex}}
\newcommand{\ep}{\varepsilon}
\newcommand{\fal}{\lfloor \alpha \rfloor}
\newcommand{\ceal}{\lceil \alpha \rceil}
\newcommand\DOI[1]{{DOI:#1}}
\newcommand\junk[1]{}
\newcommand{\dd}[3]{\operatorname{\mathbb D}(#1,#2,#3)}
\newcommand{\dds}[4]{\operatorname{\mathbb D}(#1,#2,#3,#4)}
\newcommand{\lb}{\operatorname{LEG}}
\newcommand{\pppp}[1]{\partial^{++}(#1)}
\newcommand{\Smax}[1][r]{\Sigma_{\max}(#1)}
\newcommand{\amax}[1][r]{x_{\max}(#1)}
\newcommand{\Smin}[1][r]{\Sigma_{\min}(#1)}
\newcommand{\amin}[1][r]{x_{\min}(#1)}
\newcommand{\FG}{fully graphic}
\newcommand{\jms}{\varphi^*_{J\!M\!S}}
\newcommand{\hflip}[3]{(#1,#2)\Rightarrow (#1,#3)}
\newcommand{\appropto}{\mathrel{\vcenter{
  \offinterlineskip\halign{\hfil$##$\cr
    \propto\cr\noalign{\kern2pt}\sim\cr\noalign{\kern-2pt}}}}}
\begin{document}

\begin{frontmatter}
%\title{Random graphs with any given fully graphic degree sequence can be sampled rapidly}

%\title{Graphs with a fully graphic degree sequence can be sampled rapidly}
%\title{Full characterization of $P$-stable degree sequence regions}

\title{Any fully graphic region of degree sequences can be sampled rapidly}

\author[renyi]{Péter L. Erdős\fnref{elp}}
\author[NEU]{Gábor Lippner\fnref{lg}}
\author[NEU]{Na'ama Nevo}
\author[renyi]{Lajos Soukup}
\address[renyi]{HUN-REN Alfréd Rényi Institute of Mathematics, Reáltanoda u 13--15 Budapest, 1053 Hungary\\
\texttt{$<$erdos.peter,soukup.lajos$>$@renyi.hun-ren.hu}}
\address[NEU]{Northeastern University, 360 Huntington Ave, Boston, MA, 02115\\ \texttt{$<$g.lippner,nevo.n$>$@northeastern.edu}}
\fntext[elp]{PLE was supported in part by NKFIH grant SNN~135643, K~132696 and he would like to thank the Institute for Computational and Experimental Research in Mathematics (ICERM) in Providence, Rhode Island, for its hospitality during the Fall research semester in 2024.}
\fntext[lg]{GL was supported in part by Simons Foundation Collaboration Grant No 953420}

\begin{abstract}
Let $n>c_1\ge c_2$ and $\Sigma$ be positive integers with $n\cdot c_1\ge \Sigma \ge n\cdot c_2.$ Let  $\mD=\dds{n}{\Sigma}{c_1}{c_2}$ denote the set of all degree sequences of length $n$ with the even sum $\Sigma$ and satisfying  $c_1\ge d_i\ge c_2.$ We show that if all degree sequences in $\mD$ are graphic, then $\mD$ is $3n^{13}$-stable. (The concept of $P$-stability  was introduced by Jerrum and Sinclair in 1990.) In particular, this implies that the switch Markov-chain mixes rapidly on all such degree sequences.

In this paper we also study the inverse direction. We show the following: if all graphic sequences of a degree sequence region satisfy the $p(n)$-stability condition then the overwhelming majority of the sequences in the region is graphic. This answers affirmatively a question raised in the paper  \DOI{10.1016/j.aam.2024.102805}.
\end{abstract}
\begin{keyword}
\FG\ degree sequences, simple degree sequence region,  $P$-stable degree sequences, switch Markov chain
\end{keyword}
\end{frontmatter}

%%%%%%%%%%%%%%%%%%%%%%%%%%%%%
\section{Introduction}\label{sec:intro}
%%%%%%%%%%%%%%%%%%%%%%%%%%%%%

Generating uniformly random graphs with a given degree sequence is a fundamental, and long researched, task in graph theory. There are two standard approaches to it: the configuration model and MCMC methods. On one hand, the configuration model has known limitations when the degree sequence is heterogeneous. On the other hand, while MCMC methods work well in practice, rigorous results on convergence rates are difficult to obtain.

The simplest MCMC method, proposed by Rao, Jana and Bandyopadhyay in \cite{RJB}, is the so-called swap Markov-chain: one step consists of replacing a pair of edges $(v_1w_1), (v_2w_2)$ with $(v_1 w_2), (v_2,w_1)$. This is allowed only when such a change does not produce multi-edges. It was conjectured by Kannan, Tetali, and Vempala in 1997~\cite{KTV97} that this Markov-chain mixes in polynomial time for all graphic degree sequences. However, the conjecture is open to this day. In fact, it has been verified only for a few families of degree sequences.

Jerrum and Sinclair~\cite{JS90} introduced a notion - called $P$-stability - for degree sequences, in order to study a different Markov-chain for graph generation. However, it turned out to be a useful concept for the swap Markov-chain as well. So useful, in fact, that in~\cite{P-stable} not only was it shown that $P$-stability implies fast mixing of the swap Markov-chain, but also that all previously known cases can be proved this way.

However, verifying $P$-stability of degree sequence families is still a challenging task. There are only a handful of known results in this direction - see Section~\ref{sec:old_results}.
\textbf{The main contribution of this paper is a novel method to show $P$-stability, and in turn polynomial-time mixing, for a broad new class of degree sequences.} Essentially, we show that when $P$-stability fails for a sequence, then there is another degree sequence with the same total number of edges and the same maximum and minimum degree that is not graphic. \textbf{We also show a partial converse to this:} if a degree sequence is not graphic, there is almost always an other (graphic) sequence with the same total number of edges and same maximum and minimum degree that is not $P$-stable. This positively answers a question raised in~\cite{fully}.

The outline of the paper is as follows. We introduce the key definitions in Section~\ref{sec:prelim}. In Section~\ref{sec:old_results} we list the most important known results about $P$-stability. Our new results are described in Section~\ref{sec:new_results}. The proofs of the main Theorems~\ref{tm:main} and~\ref{thm:sigmabound} are given in Sections~\ref{sec:fgps} and~\ref{sec:almost}, respectively.

%%%%%%%%%%%%%%%%%%%%%%%%%%%%%
\subsection{Preliminaries}\label{sec:prelim}
%%%%%%%%%%%%%%%%%%%%%%%%%%%%%

Here we recall (and slightly expand) the notions of various degree sequence regions from~\cite{fully} that are central to stating our results, as well as the concept of $P$-stability.

\paragraph{Degree sequence regions} A \textbf{degree sequence} is a sequence $\mathbf{d} = (d_1, \ldots , d_n)$ of positive integers, where no $d_i$ can exceed $n-1$, with even sum denoted by $\Sigma$. (In this paper, the parameter $\Sigma$ is always even.) We will often use $c_1 = c_1(\mathbf{d}) := \max (\mathbf{d})$ and $c_2 = c_2(\mathbf{d}) := \min (\mathbf{d})$) to denote the largest and smallest degrees of the sequence. We say $\mathbf{d}$ is \emph{graphical} if there exists a simple graph on the vertex set $[n]$ such that vertex $i$ has degree $d_i$ for every $i \in [n].$

We refer to a subset of all degree sequences given by a formula $\varphi = \varphi(\mathbf{d})$ as \textbf{degree sequence region} (or just region, for short) and denote it as
\begin{equation}
  \mD[\varphi] = \{ \mathbf{d} : \varphi(\mathbf{d})\}.
\end{equation}
We say that a region is \textbf{simple} if $\varphi$ depends only on $n, \Sigma, c_1, c_2$. In particular, for positive integers $n > a \geq b$ we denote
\begin{equation}\label{eq:VSR}
\dds{n}{\Sigma}{a}{b}=\mD\left[\ |\mathbf{d}|=n, a \geq c_1 \geq c_2 \geq b, \mbox{ and } {\textstyle\sum_{i=1}^n}d_i=\Sigma\ \right].
\end{equation}
When $\varphi$ doesn't even depend on $\Sigma$, we call the region \textbf{very simple}. In particular, we write
\begin{equation}\label{eq:SR}
\dd{n}{a}{b}= \mD[|\mathbf{d}|=n, a\geq c_1 \geq c_2 \geq b] = \bigcup \left\{\dds{n}{\Sigma}{a}{b}\ : \ n\cdot a\ge \Sigma \ge n\cdot b \right \}.
\end{equation}
Typically, a region $\mD$ may contain both graphic and non-graphic elements. We will be specifically interested in regions in which every sequence is graphic. Such regions are called \textbf{fully graphic}.

\paragraph{$P$-stability} The notion of $P$-stability of an infinite set of graphic degree sequences was introduced by Jerrum and Sinclair in \cite{JS90}. The notion plays a crucial role in complexity theory and graph theory.

Given a graphic degree sequence ${D}$ of length $|D|=n,$ let $\mathcal{G}(D)$ denote the set of all realizations of $D$ and let
\begin{equation}\label{eq:localP}
\pppp {{D}}=\sum_{1\le i< j\le n}{|\mathcal{G}({D}+1^{+i}_{+j})|}/{|\mathcal{G}({D})|},
\end{equation}
where the vector $1^{+i}_{+j}$  consists of all zeros, except at the $i$th and $j$th coordinates, where the values are $+1$. The operation $D\mapsto {D}+1^{+i}_{+j}$  is called  a \textbf{perturbation operation} on the degree sequences. We define the analogous operation $D+1^{-i}_{-j}$ similarly. Let us emphasize that we do not assume that $i$ and $j$ are different, so for example the operation $D \mapsto D+1^{+j}_{+j}$ is also  defined and it adds 2 to the $j^{th}$ coordinate.

An infinite region $\mD$ is called {\bf $P$-stable } if there is a polynomial $p(n)$ such that
\begin{equation}\label{eq:pstab}
\forall D \in \mD: \mbox{ $D$ is graphic} \Rightarrow \pppp{{D}}\le p(|{D}|)
\end{equation}
 The notion can be extended meaningfully for finite regions by fixing the polynomial $p(n)$ instead of just assuming its existence. Thus, we will say that a region is $p$-stable when it satisfies \eqref{eq:pstab} for a concrete polynomial $p = p(n)$. Let us emphasize that we do not require that   every element of a $P$-stable family to be graphic. $P$-stability has alternative, but equivalent, definitions  that use different perturbation operations (see the Appendix in \cite{fully} a for detailed explanation).

%%%%%%%%%%%%%%%%%%%%%%%%%%%%%
\subsection{Previous results}\label{sec:old_results}
%%%%%%%%%%%%%%%%%%%%%%%%%%%%%

Since the introduction of $P$-stability, only the following four infinite $P$-stable degree sequence regions were found:
\begin{enumerate}[(P1)]
\item (Jerrum, McKay, Sinclair \cite{JMS92}) The very simple  degree sequence region $\mathbb D[{\varphi}_{\ds \! {J\!M\!S}}]$ is $P$-stable, where $\varphi_{\ds \! {J\!M\!S}}\equiv (c_1-c_2+1)^2 \le 4 c_2(n-c_1-1)$.
\item (Jerrum, McKay, Sinclair \cite{JMS92}) The simple  degree sequence region $\mathbb D[\jms]$ is $P$-stable, where $    {\jms} \equiv (\Sigma-nc_2) (nc_1-\Sigma)\le (c_1-c_2)\left\{(\Sigma-nc_2)(n-c_1-1)+ (nc_1-\Sigma)c_2\right\}.$
\item (Greenhill, Sfragara \cite{G18}) The  simple degree sequence region $\mathbb D[{\varphi}_{\ds \! {GS}}]$ is $P$-stable, where  $\varphi_{\ds \! {GS}}\equiv (2\le c_2\text{ and }  3\le c_1 \le \sqrt{\Sigma/9})$. (This result was not announced explicitly, but \cite[Lemma 2.5]{G18} clearly proved this fact.)
\item (Gao, Greenhill \cite{GG20}) $\mathbb D[{\varphi}_{\ds \! {GG}}]$ is $P$-stable, where  $\varphi_{\ds \! {GG}}\equiv (\sum_{j=1}^{c_1} d_j + 6c_1+2 \le \sum_{j=c_1+1}^n d_j)$.  Here the degrees are understood to be in non-increasing order.
\end{enumerate}
Note that region (P1) is very simple, regions (P2) and (P3) are simple, while (P4) is not a simple region.

The paper~\cite{fully} was inspired by the observation that all of these regions are fully graphic. This is basically obvious for (P4) and was proved in~\cite{fully} for the others. This prompted a more thorough investigation of a potential connection between $P$-stability and the fully graphic property of regions. In~\cite{fully} it was shown that in the case of very simple regions the two notions are nearly equivalent. We summarize their main findings briefly, and without some of the technical details, in the following statements. We refer the reader to~\cite{fully} for the exact formulation of Theorem~\ref{tm:old2} below.

\begin{theorem}[{\cite[Theorems 4.2, 4.4]{fully}}]\label{tm:old}~
Every fully graphic very simple region is $(n^9)$-stable. Thus the maximal fully graphic very simple region  	
\begin{displaymath}
\mD_{\max}\coloneqq\bigcup\{\dd{n}{c_2}{c_1}:\text{ $\dd{n}{c_2}{c_1}$ is fully graphic}\}
\end{displaymath}
is $(n^9)$-stable, and the switch Markov chain is rapidly mixing on $\mD_{\max}$ with $O(n^{11})$ mixing time.
\end{theorem}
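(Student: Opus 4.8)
The plan is to prove the statement in three moves: (a) show that every fully graphic very simple region $\dd{n}{c_1}{c_2}$ satisfies $\pppp{D}\le n^9$ for every graphic $D$ in it; (b) deduce the same bound for $\mD_{\max}$; (c) feed $(n^9)$-stability into the known reduction to rapid mixing. The only substantive work is (a), and I would attack it by \emph{pinning down exactly which triples $(n,c_1,c_2)$ are fully graphic and recognising the answer}. The region is fully graphic iff every length-$n$ integer sequence with all entries in $[c_2,c_1]$ and even sum is graphic; by the Ruch--Gutman theorem (graphicness descends under majorization among sequences of a fixed sum) it suffices to test, for each admissible even sum $S$, the unique majorization-maximal such sequence $M_S$ — an almost two-block sequence $(c_1^{(a)},v,c_2^{(n-1-a)})$. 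Running the \eg\ inequalities on these (the binding index turns out to be $k=a$ with $a\ge c_2$, and the intermediate term $v$ and parity only shift things by $O(1)$) collapses to the single requirement $a^2-a(c_1+c_2+1)+nc_2\ge 0$ for all $a$, i.e.\ to the discriminant condition $(c_1+c_2+1)^2\le 4c_2 n$. And this is \emph{algebraically identical} to the condition $(c_1-c_2+1)^2\le 4c_2(n-c_1-1)$ defining region (P1): expanding both sides, each is equivalent to $c_1^2+c_2^2+1+2c_1c_2+2c_1+2c_2\le 4c_2 n$. So, apart from degenerate cases, a fully graphic very simple region \emph{is} the Jerrum--McKay--Sinclair region (P1).

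It then remains to (i) handle the degenerate exceptions and (ii) make (P1) quantitative. For (i): when $c_1=c_2$ the region is a family of near-regular sequences, which is trivially well-behaved; when $c_1=n-1$, (P1) is empty but the region can still be fully graphic, and here the standard "dominating-vertex" reduction applies — a vertex of degree $n-1$ is joined to everyone, so after stripping all such vertices one lands in a region with strictly smaller maximum degree on fewer vertices, handled by induction; and there is a thin $O(1)$-wide boundary layer (e.g.\ $(c_1+c_2+1)^2=4c_2 n+1$ with $c_1+c_2$ even, where the region is fully graphic yet just fails (P1)), which one disposes of by noting that the Jerrum--McKay--Sinclair injection tolerates an additive $O(1)$ slack in its inequality at the cost of enlarging the polynomial. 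For (ii): re-reading the Jerrum--McKay--Sinclair injection \cite{JMS92} for (P1) yields a concrete polynomial bound $\pppp{D}\le q(n)$ with $\deg q$ comfortably below $9$ (essentially $\binom{n}{2}$ perturbation pairs, each contributing an $n^{O(1)}$ switching multiplicity); inflating generously to $n^9$ gives $(n^9)$-stability uniformly over all fully graphic very simple regions.

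Step (b) is then formal: if $D\in\mD_{\max}$ with $|D|=n$, then $D$ lies in some fully graphic $\dd{n}{a}{b}$ with $a\ge c_1(D)\ge c_2(D)\ge b$, hence in the sub-box $\dd{n}{c_1(D)}{c_2(D)}$, which is therefore also fully graphic; by (a), $\pppp{D}\le n^9$. Since the bound depends on $n$ only, $\mD_{\max}$ is $(n^9)$-stable. For step (c) I would invoke the reduction of \cite{P-stable}, which turns $p(n)$-stability of a degree sequence region into rapid mixing of the switch Markov chain with mixing time polynomial in $n$ and $p(n)$ — of order (number of edges)$\,\cdot\,p(n)$ up to logarithmic factors; as the number of edges here is $O(n^2)$, substituting $p(n)=n^9$ yields the advertised $O(n^{11})$ mixing time.

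The main obstacle is the characterisation in step (a): reducing full-graphicness to the family $\{M_S\}$ via Ruch--Gutman, correctly identifying the binding \eg\ index, and keeping the parity and intermediate-degree refinements under control so that the condition genuinely collapses to the clean discriminant inequality (the algebraic coincidence with (P1) is pleasant, but one must be sure there is no hidden gap beyond the $O(1)$ boundary layer and the $c_1=n-1$ family). A secondary obstacle is extracting an explicit, honestly verified polynomial from the Jerrum--McKay--Sinclair argument — the sampling and mixing literature usually states (P1) only qualitatively — and confirming that $n^9$ is large enough to absorb both that polynomial and the $O(1)$-slack tolerance used for the boundary regions.
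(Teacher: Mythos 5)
Your reduction of full graphicness to the majorization-extremal two-block sequences, and the algebraic identity $(c_1+c_2+1)^2\le 4nc_2 \Leftrightarrow (c_1-c_2+1)^2\le 4c_2(n-c_1-1)$, are both sound (the latter is exactly the observation made in the paper's proof of Lemma~\ref{lem:fully}), and your steps (b) and (c) are fine. The genuine gap is in how you dispose of the regions that are fully graphic yet violate (P1). Your analysis only shows that such a region satisfies the JMS inequality with an additive $O(1)$ slack (equivalently $0<Q\le O(1)$, or $Q>0$ with no integer between the two roots), and you then assert that the Jerrum--McKay--Sinclair injection ``tolerates an additive $O(1)$ slack at the cost of enlarging the polynomial.'' That assertion is unproven and is not a routine constant-chase: in \cite{JMS92} the polynomial comes from the bounded length of the alternating path, while the inequality enters as a strict counting contradiction in the structural lemma; with slack the contradiction can simply fail, and enlarging the polynomial does not rescue it --- one would have to prove that (slightly longer) witness trails still exist under the weakened inequality, which is not in \cite{JMS92}. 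In this boundary window the binding hypothesis is full graphicness itself (an integrality statement about which Erd\H{o}s--Gallai indices are attainable), not the real discriminant condition, so any honest treatment must use full graphicness directly --- at which point you have essentially reconstructed the intended proof rather than bypassed it. Since this thin layer is precisely what distinguishes the theorem from a restatement of (P1), the proposal as written does not prove the statement.

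For comparison, the paper (following \cite{fully}) argues contrapositively and uniformly, with no case split between (P1) and a boundary layer: if $\Gamma_G(v_p)=\Gamma_G(v_q)$ and there is no $7$-witness trail, the JMS structure $(S,X,Y,Z,R)$ of Lemma~\ref{th:L1} is available, and deleting all edges inside $R$ yields a graph with a hostile configuration whose degree sequence still lies in $\dd{n}{c_1}{c_2}$, because every $r\in R$ keeps degree at least $|X|-1\ge c_2$ by Lemma~\ref{th:L1}(v); by Lemma~\ref{lem:Sou-3} the corresponding unperturbed sequence is not graphic, contradicting full graphicness. The $n^9$ bound then follows from the trail-counting argument used to derive Theorem~\ref{tm:main} from Theorem~\ref{th:alter} (a $7$-trail is determined by at most $8$ vertices), and the $O(n^{11})$ mixing time is imported from \cite{P-stable}. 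Two further soft spots in your write-up: the explicit polynomial ``comfortably below $n^9$'' attributed to \cite{JMS92} is asserted rather than derived (as you note, that source is quantitative only implicitly), and the $c_1=n-1$ dominating-vertex induction is only sketched --- though that case is nearly vacuous, since a fully graphic very simple region with $c_1=n-1$ forces $c_2\ge n-2$.
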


\begin{theorem}[{\cite[Theorem 5.8]{fully}}]\label{tm:old2}
If the very simple region $\dd{n}{c_1}{c_2}$ is not fully graphic then it is contained in a slightly larger very simple region $\dd{n}{c_1\cdot (1+\ep)}{c_2\cdot(1-\ep)}$ that is not $P$-stable, for some suitable $\ep = o(1)$ function.
\end{theorem}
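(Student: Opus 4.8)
The plan is to prove the statement directly by exhibiting, inside the enlarged region $\dd{n}{c_1(1+\ep)}{c_2(1-\ep)}$, a single graphic degree sequence $D$ for which the quantity $\pppp{D}$ exceeds any prescribed polynomial in $n$; this is exactly what non-$P$-stability demands. Start from a non-graphic $\mathbf d^*\in\dd{n}{c_1}{c_2}$. By the \eg\ theorem it fails the graphicality inequality at some level $k^*$ with a positive deficiency, and this failure localizes a \emph{bottleneck}: the $k^*$ largest coordinates of $\mathbf d^*$ collectively demand more edge-endpoints than the remaining $n-k^*$ coordinates can absorb. Since a very simple region is just an integer box (all coordinates lying in $[c_2,c_1]$) cut out by the even-sum condition, there is room to rearrange the coordinates away from the bottleneck without leaving the slightly enlarged box.

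I would then build the witness $D$ from $\mathbf d^*$ in two moves. First, adjust a bounded number of coordinates (or uniformly rescale the bottom block) so that the \eg\ bound becomes \emph{exactly} tight at level $k^*$ while staying strictly slack at every other level; this makes $D$ graphic. Second, arrange the part of $D$ below level $k^*$ so that it contains a large block $B_0$ of vertices of a common degree $d_0\in[c_2,c_1]$ which, by the level-$k^*$ tightness, are forced in every realization of $D$ to send essentially all of their edges to the top $k^*$ vertices (which in turn are forced to form a clique). All of these moves change $D$ only within the $\ep$-dilated box — this is exactly what the multiplicative slack $1\pm\ep$ buys — and one checks that $\ep=o(1)$ suffices (roughly $\ep\gtrsim 1/c_1$; if $c_1$ were bounded then the original region would be fully graphic for all large $n$, so that case does not arise). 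The level-$k^*$ tightness also keeps $|\mathcal{G}(D)|$ under control: the forced clique-plus-rigid-attachment structure bounds it by a polynomial, and with enough care in the way the cross-edges are arranged one can even make $|\mathcal{G}(D)|=1$.

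The explosion then comes from a single perturbation $D\mapsto D+1^{+i}_{+j}$ with $i,j\in B_0$. It relaxes precisely the level-$k^*$ \eg\ inequality, so the block $B_0$ is released from its forced attachment pattern and may be rewired internally; counting only these rewirings — realizing essentially any graph with $B_0$'s own degrees on the vertex set $B_0$ — makes $|\mathcal{G}(D+1^{+i}_{+j})|$ super-polynomial. Hence $\pppp{D}\ge |\mathcal{G}(D+1^{+i}_{+j})|/|\mathcal{G}(D)|$ beats every polynomial, and $\dd{n}{c_1(1+\ep)}{c_2(1-\ep)}$ is not $P$-stable.

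The step I expect to be the real obstacle is reconciling the two opposite requirements on $D$: the level-$k^*$ tightness must be robust enough to hold $|\mathcal{G}(D)|$ down to a polynomial, yet fragile enough that a single perturbation — which can alter each \eg\ slack by only $O(1)$ — genuinely converts the rigid block $B_0$ into one with super-polynomially many internal configurations. Engineering the gadget (the choice of $k^*$, of $d_0$, of $|B_0|$, of how to arrange the cross-edges) so that this $O(1)$ of freshly created slack is precisely what flips rigid to flexible, while keeping every degree inside the $\ep$-box and the total sum even, is where essentially all the technical work sits. A softer alternative worth trying is to invoke Theorem~\ref{thm:sigmabound}: a non-graphic sequence in the original box forces many non-graphic sequences after $\ep$-dilation, so a $P$-stable enlarged region would have to be almost entirely graphic, a contradiction — but making ``many'' quantitatively strong enough to contradict ``overwhelming majority'' appears to demand comparable care.
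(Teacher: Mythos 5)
There is a genuine gap, and it sits exactly where you located the ``real obstacle'': the explosion mechanism you propose cannot work as described. If the \eg\ inequality for $D$ is tight at a single level $k^*$ and slack everywhere else, then tightness forces the top $k^*$ vertices to form a clique and every lower vertex to send $\min(d_i,k^*)$ edges upward; but a single perturbation $D\mapsto D+1^{+i}_{+j}$ with $i,j\in B_0$ increases the right-hand side of that one inequality by at most $2$, so in any realization of the perturbed sequence the block $B_0$ can acquire at most $O(1)$ internal edges. You do not get ``essentially any graph with $B_0$'s own degrees on $B_0$''; you get a bounded number of repositioned edges, hence only a polynomial factor of new realizations, and $\pppp{D}$ stays polynomial. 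Super-polynomial boundary quotients require a whole \emph{staircase} of simultaneously (near-)tight constraints, so that the one unit of slack can be absorbed at any of linearly many nested levels with roughly independent choices. This is precisely what the half-graph $\mathbf{h}_{r/2}=(1,2,\dots,r/2,r/2,\dots,r-1)$ provides: it is a rigid (threshold) sequence with a unique realization, yet $\pppp{\mathbf{h}_{r/2}}=\Theta(3^{r/2})$ by \eqref{eq:fully-iff3}. The actual proof (in \cite{fully}; the present paper only cites the statement, but its Section~\ref{sec:almost} uses the same device for simple regions) does not engineer a gadget from one tight level: it embeds the half-graph as $G[R]$ inside a padded graph (clique $X$, independent set $Y$, $G[X,R]$ complete, $G[Y,R]$ empty), whose degree sequence then inherits a boundary quotient at least $\pppp{\mathbf{h}_{r/2}}$, and the multiplicative slack $c_1(1+\ep)$, $c_2(1-\ep)$ is what guarantees such a padded sequence fits inside the enlarged very simple region.

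Your fallback idea of invoking Theorem~\ref{thm:sigmabound} is closer to the truth, since that theorem is exactly the half-graph construction, but it is not automatic either: its hypotheses require the quantitative gap \eqref{eq:beta} (i.e.\ $Q\geq\beta(c_1-c_2+1)^2$, not merely $Q>0$) and that $\Sigma$ lie in the window shrunk by $\varepsilon$, whereas ``not fully graphic'' only yields $Q>0$ and the full window of Lemma~\ref{lem:fully}. Bridging that mismatch is precisely the role of the $\ep$-dilation of the region (enlarging $c_1$ and shrinking $c_2$ inflates $Q$ and the admissible $\Sigma$-range), and carrying this out with $\ep=o(1)$ is the content of the cited proof, not a soft corollary. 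As written, your argument establishes neither the rigidity-to-flexibility jump nor the containment of a suitable witness sequence in the enlarged region.
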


%%%%%%%%%%%%%%%%%%%%%%%%%%%%%
\subsection{New results}\label{sec:new_results}
%%%%%%%%%%%%%%%%%%%%%%%%%%%%%

The goal of this paper is to extend the results of~\cite{fully} to simple regions. For the generalization of Part 1 of Theorem~\ref{tm:old} we show the following:
\begin{theorem}\label{tm:main}
If $\ \dds{n}{\Sigma}{c_1}{c_2}$ is fully graphic, then $\pppp{D}\le 3\cdot n^{13}$ for each  $D\in \dds{n}{\Sigma}{c_1}{c_2}$.
\end{theorem}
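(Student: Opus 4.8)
The plan is to bound $\pppp{D}$ by controlling, for each pair $i,j$, the ratio $|\mathcal{G}(D+1^{+i}_{+j})|/|\mathcal{G}(D)|$, and the natural way to do this is via a contrapositive argument in the spirit of Theorem~\ref{tm:old}. First I would fix $D\in\dds{n}{\Sigma}{c_1}{c_2}$ and suppose, for contradiction, that $\pppp{D}$ is large — say larger than $3n^{13}$. Since $\pppp{D}$ is a sum of at most $\binom{n+1}{2}=O(n^2)$ terms (counting $i=j$), one of the perturbation ratios $|\mathcal{G}(D+1^{+i}_{+j})|/|\mathcal{G}(D)|$ must itself be large, of order $n^{11}$. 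Morally, a single large ratio means that from a typical realization of $D$ one can raise the degrees at $i$ and $j$ in very many ways while staying simple, i.e.\ there are many ``free'' non-edges available at $i$ and $j$; the heart of the argument is to convert this abundance into the construction of a \emph{non-graphic} sequence with the same $n$, $\Sigma$, $c_1$, and $c_2$, contradicting the hypothesis that $\dds{n}{\Sigma}{c_1}{c_2}$ is fully graphic.

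The key steps, in order, would be: (1) reduce a large $\pppp{D}$ to a single large ratio $R_{ij}:=|\mathcal{G}(D+1^{+i}_{+j})|/|\mathcal{G}(D)|$; (2) use a Sinclair-type encoding/switching argument to show that if $R_{ij}$ is polynomially large then one can move mass in the other direction — that is, the sequence $D':=D-1^{-k}_{-\ell}$ (lowering two suitably chosen coordinates) has the property that $|\mathcal{G}(D')|$ is ``too small'', ideally $0$, relative to what a graphic sequence should support; (3) choose the lowered coordinates $k,\ell$ so that $D'$ still lies in a simple region with the \emph{same} extreme values $c_1,c_2$ — this is where one must be careful, using the room between $c_2$ and the actual minimum entry, and between the actual maximum entry and $c_1$, so that decreasing two mid-range coordinates by $1$ keeps $\max=c_1$ and $\min=c_2$; (4) combine with the Erdős–Gallai characterization to witness non-graphicality of $D'$, contradicting full graphicality. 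One can also phrase this as transporting the bad sequence to an auxiliary $\dds{n}{\Sigma'}{c_1}{c_2}$ with $\Sigma'=\Sigma\pm 2$ lying in the same region's closure, then invoking the $\Sigma$-independence only through $c_1,c_2$; the polynomial $3n^{13}$ should emerge as the product of the $O(n^2)$ term count from step~(1), the $O(n^9)$-type stability bound inherited from the very simple case via Theorem~\ref{tm:old} (applied after fixing $\Sigma$), and an extra $O(n^2)$ loss from the additional combinatorial freedom in choosing which coordinates to perturb in a genuinely simple (rather than very simple) region.

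The main obstacle I expect is step~(3) together with the interaction between fixing $\Sigma$ and preserving $c_1,c_2$. In the very simple setting of Theorem~\ref{tm:old} one has total freedom in $\Sigma$, so one can always slide to a nearby non-graphic sequence; here $\Sigma$ is pinned, so a perturbation that raises degrees at $i,j$ must be compensated elsewhere, and one must show the compensated sequence still has exactly $c_1$ as its max and $c_2$ as its min. The delicate case is when $D$ is already ``extremal'' — many coordinates equal to $c_1$ or many equal to $c_2$ — so that there is little or no slack to absorb the perturbation without changing the extreme values; handling this will likely require a separate argument showing that in the extremal regime $\pppp{D}$ is automatically small (e.g.\ because the realization set is rigid or because $D$ is close to a threshold sequence), or a more clever choice of the four coordinates involved in a double switch so that the net effect on the degree sequence is a shift purely among mid-range entries. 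Once that case analysis is in place, the counting in steps~(1), (2), (4) should be routine, and the stated bound $3n^{13}$ follows by bookkeeping the polynomial factors.
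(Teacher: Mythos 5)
Your high-level instinct — argue contrapositively and turn a failure of the bound into a non-graphic sequence with the same $n,\Sigma,c_1,c_2$ — is indeed the paper's guiding idea, but the proposal is missing the two mechanisms that actually make it work, and the steps you do sketch would not go through as stated. First, the bound $3n^{13}$ does not come from pigeonholing $\pppp{D}$ into a single large ratio $R_{ij}$ and multiplying polynomial losses. The paper never isolates a large ratio: it shows that (under full graphicality) every realization $G'$ of $D+1^{+i}_{+j}$ admits a short alternating \emph{witness trail} between $v_i$ and $v_j$ (length at most $11$, after a possible preliminary hinge-flip reducing to the case $\Gamma(v_i)=\Gamma(v_j)$), flips along it to land in $\mathcal G(D)$, and counts how much information is needed to invert this map; the trail has at most $12$ vertices, giving $n^{12}+2n^{13}\le 3n^{13}$ preimages per target and hence the bound on the whole sum $\pppp{D}$ at once. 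Your step (2) — that a large ratio means a typical realization of $D$ has many ``free'' non-edges at $i,j$, or that some lowered sequence $D-1^{-k}_{-\ell}$ has few or no realizations — is not a correct reading of what a large perturbation ratio gives you, and nothing in the proposal supplies the trail/encoding argument that replaces it.

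Second, the step you yourself flag as the main obstacle (your step (3): producing a non-graphic sequence while keeping $\Sigma$ fixed and the extremes equal to $c_1,c_2$) is exactly the hard core of the paper, and it is not resolved by choosing two mid-range coordinates to lower, nor by Erd\H{o}s--Gallai bookkeeping, nor by ``transporting'' to $\dds{n}{\Sigma\pm2}{c_1}{c_2}$ — the hypothesis of full graphicality is only for the given $\Sigma$, so moving to a neighboring $\Sigma$ gives no contradiction (the paper stresses precisely that staying in $\dd{n}{c_1}{c_2}$ is not enough and one must remain in $\dds{n}{\Sigma}{c_1}{c_2}$). What the paper actually does when no short witness trail exists is a structural analysis inherited from Jerrum--McKay--Sinclair: partition the vertices into $S,X,Y,Z,R$, refine $R$ into $R_0,R_1,\dots,R_k,R_\infty$ using the absence of short ``pre-witness'' trails, and then perform a controlled sequence of hinge-flip operations (downward and upward twists) that preserves $\Sigma$ and keeps all degrees in $[c_2,c_1]$, ending in a graph with a ``hostile configuration''; a separate lemma shows a hostile configuration certifies that the associated sequence $D''\in\dds{n}{\Sigma}{c_1}{c_2}$ is not graphic, contradicting full graphicality. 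None of this construction, nor any workable substitute for it, appears in the proposal, so as written it has a genuine gap at its central step.
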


\begin{corollary}
The largest fully graphic simple region
\begin{displaymath}
\mathbb D\coloneqq\bigcup\left\{\mD(n,\Sigma, c_1,c_2):
\mD(n,\Sigma, c_1,c_2) \text{ is fully graphic} \right\}
\end{displaymath}
is $P$-stable.
\end{corollary}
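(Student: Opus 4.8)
The plan is to read this off directly from Theorem~\ref{tm:main}; essentially all one has to do is unwind the definitions, since the substance is already contained in that theorem. The first observation is that $\mathbb D$ is itself fully graphic: by construction every $D\in\mathbb D$ lies in \emph{some} fully graphic simple region $\dds{n}{\Sigma}{c_1}{c_2}$ with $n=|D|$, and hence $D$ is graphic. Therefore the hypothesis ``$D$ is graphic'' appearing in the $P$-stability condition \eqref{eq:pstab} is automatically satisfied by every member of $\mathbb D$, and it only remains to exhibit a single polynomial $p$ that bounds $\pppp{D}$ uniformly over all $D\in\mathbb D$.

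For this, fix an arbitrary $D\in\mathbb D$ and choose a fully graphic simple region $\dds{n}{\Sigma}{c_1}{c_2}$ containing it (with $n=|D|$); if one prefers a canonical choice, one may take the tightest one, $\dds{n}{\Sigma(D)}{c_1(D)}{c_2(D)}$, where $\Sigma(D)$ denotes the sum of the entries of $D$, noting that any fully graphic region $\dds{n}{\Sigma}{a}{b}\ni D$ necessarily satisfies $\Sigma=\Sigma(D)$, $a\ge c_1(D)$ and $c_2(D)\ge b$, so that $\dds{n}{\Sigma(D)}{c_1(D)}{c_2(D)}\subseteq\dds{n}{\Sigma}{a}{b}$ and is fully graphic as a subset of a fully graphic region. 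Applying Theorem~\ref{tm:main} to this region immediately gives $\pppp{D}\le 3n^{13}=3|D|^{13}$.

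Since the bound $3n^{13}$ produced by Theorem~\ref{tm:main} depends only on $n$ and not on the parameters $\Sigma,c_1,c_2$, it is genuinely uniform across the whole union defining $\mathbb D$. Hence $\mathbb D$ satisfies \eqref{eq:pstab} with the concrete polynomial $p(n)=3n^{13}$, so $\mathbb D$ is $(3n^{13})$-stable and in particular $P$-stable. There is no substantive obstacle in this argument: the only point that requires any attention is that the polynomial furnished by Theorem~\ref{tm:main} is the same for every choice of $\Sigma,c_1,c_2$, which is precisely what allows the conclusion to pass to the (infinite) union of regions.
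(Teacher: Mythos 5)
Your proposal is correct and matches the paper's intended argument: the corollary is stated as an immediate consequence of Theorem~\ref{tm:main}, and your unwinding of the definitions --- every $D$ in the union lies in some fully graphic simple region, so $\pppp{D}\le 3|D|^{13}$, a bound uniform in $\Sigma,c_1,c_2$ --- is exactly what the paper leaves implicit.
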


To state the converse results, let us first recall from~\cite{fully} when a simple region $\dds{n}{\Sigma}{c_1}{c_2}$ is not fully graphic.

\begin{lemma}\label{lem:fully}
If $\dds{n}{\Sigma}{c_1}{c_2}$ is not fully graphic then
\begin{equation}\label{eq:fully-iff}
Q \coloneqq (c_1 - c_2 + 1)^2 - 4c_2(n - 1 - c_1) > 0
\end{equation}
and
\begin{equation}\label{eq:fully-iff2}
\left| \frac{\Sigma - n c_2}{(c_1 - c_2)/2} - (1 + c_1 + c_2) \right| \leq \sqrt{Q}+2.
\end{equation}
\end{lemma}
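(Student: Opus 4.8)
The plan is to reduce the lemma to one application of the Erdős--Gallai theorem to a single extremal ``staircase'' sequence. For a non-increasing sequence $\mathbf d=(d_1\ge\dots\ge d_n)$ write
\[
\mathrm{def}_k(\mathbf d)=\sum_{i=1}^{k}d_i-k(k-1)-\sum_{i=k+1}^{n}\min(d_i,k),
\]
so that, since $\Sigma$ is even, $\mathbf d$ is graphic iff $\mathrm{def}_k(\mathbf d)\le 0$ for all $1\le k\le n$. Given $n,\Sigma,c_1,c_2$ with $nc_2\le\Sigma\le nc_1$, set $p=\big\lfloor(\Sigma-nc_2)/(c_1-c_2)\big\rfloor$ and $m=c_2+(\Sigma-nc_2)-p(c_1-c_2)\in[c_2,c_1)$, and let $\mathbf d^\star$ be the non-increasing sequence consisting of $p$ entries equal to $c_1$, one entry equal to $m$, and $n-p-1$ entries equal to $c_2$ (omit the middle entry when $c_1-c_2$ divides $\Sigma-nc_2$). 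Then $\mathbf d^\star\in\dds n\Sigma{c_1}{c_2}$, and the whole argument rests on analysing this one sequence.

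First I would show that $\dds n\Sigma{c_1}{c_2}$ is fully graphic if and only if $\mathbf d^\star$ is graphic. One direction is trivial. For the other I would check that $\mathbf d^\star$ simultaneously maximises $\mathrm{def}_k$ over the region for every $k$: if $\mathbf d'$ arises from a non-increasing sequence $\mathbf d$ with entries in $[c_2,c_1]$ and sum $\Sigma$ by increasing a coordinate $d_i$ by $1$ and decreasing a later coordinate $d_j$ by $1$ ($i<j$), then $\mathrm{def}_k(\mathbf d')\ge\mathrm{def}_k(\mathbf d)$ for all $k$ --- split into the cases $k<i$, $i\le k<j$, $j\le k$ and use $d_i\ge d_j$ to see the truncations $\min(\cdot,k)$ move in the right direction. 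Since $\mathbf d^\star$ is the dominance-maximal such sequence, every member of the region is obtained from $\mathbf d^\star$ by a chain of the reverse moves through non-increasing sequences that stay inside $[c_2,c_1]^n$; hence $\mathrm{def}_k(\mathbf d)\le\mathrm{def}_k(\mathbf d^\star)$ for all such $\mathbf d$ and all $k$, so the region contains a non-graphic sequence exactly when $\mathbf d^\star$ is non-graphic.

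Next I would compute $\mathrm{def}_k(\mathbf d^\star)$ explicitly. Put $g(x)=x^2-(c_1+c_2+1)x+nc_2$; the elementary identity $(c_1+c_2+1)^2-4nc_2=(c_1-c_2+1)^2-4c_2(n-1-c_1)=Q$ shows the discriminant of $g$ is precisely $Q$. A short case analysis shows that when the region is not fully graphic one necessarily has $c_2\le p\le c_1$ (the cases $p<c_2$ and $p>c_1$ force $\mathbf d^\star$ graphic, and are handled symmetrically via the complementation $d_i\mapsto n-1-d_i$, under which $Q$ is invariant), and that in this range the maximum of $\mathrm{def}_k(\mathbf d^\star)$ over $k$ is attained at $k=p$ or $k=p+1$, with
\[
\mathrm{def}_p(\mathbf d^\star)=-g(p)-(\min(m,p)-c_2),\qquad
\mathrm{def}_{p+1}(\mathbf d^\star)=-g(p+1)-(c_1-m).
\]
Since $\min(m,p)-c_2\ge0$ and $c_1-m>0$, non-graphicness of $\mathbf d^\star$ forces $g(p)<0$ or $g(p+1)<0$; in particular $g$ attains a negative value, so $Q>0$, which is \eqref{eq:fully-iff}. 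Finally, writing $p'=(\Sigma-nc_2)/(c_1-c_2)$ one has $p\le p'<p+1$ and the left-hand side of \eqref{eq:fully-iff2} equals $\big|2p'-(c_1+c_2+1)\big|$; since one of $p,p+1$ lies strictly between the roots $\tfrac12\big((c_1+c_2+1)\pm\sqrt Q\big)$ of $g$, it follows that $p'$ lies in $\big(\tfrac12(c_1+c_2+1)-\tfrac12\sqrt Q-1,\ \tfrac12(c_1+c_2+1)+\tfrac12\sqrt Q+1\big)$, and doubling yields \eqref{eq:fully-iff2}.

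The conceptual skeleton above is short; the real work, and the main obstacle, is the bookkeeping in the last paragraph: verifying that the maximum of $\mathrm{def}_k(\mathbf d^\star)$ genuinely sits at $k\in\{p,p+1\}$, getting the two displayed formulas right in every subcase, and confirming that the two sources of slack --- the fractional part of $p'$ absorbed into the middle entry $m$, and the gap between an integer threshold and the real root of $g$ --- together never exceed the ``$+2$'' in \eqref{eq:fully-iff2} (the even-sum hypothesis on $\Sigma$ means there is no extra parity loss). A secondary delicate point is keeping the compression chain of the second paragraph entirely inside the region and non-increasing at every step; here the complementation symmetry is a convenient labour-saving device for the boundary cases.
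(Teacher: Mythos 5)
Your argument is correct and follows essentially the same route as the paper: your extremal staircase sequence $\mathbf d^\star$ is exactly the paper's $\lb(n,\Sigma,c_1,c_2)$ sequence (whose graphicality characterizes full graphicality, Lemma~\ref{lem:LEG}, cited from \cite{fully}), and deducing from an Erd\H{o}s--Gallai failure at $k=\lfloor\alpha\rfloor$ or $k=\lceil\alpha\rceil$ the inequality $k^2-(c_1+c_2+1)k+nc_2<0$, hence $Q>0$ and the $\sqrt{Q}+2$ bound via $|\alpha-k|\le 1$, is precisely the paper's computation (which cites Tripathi--Vijay \cite{TV03} for the corner-point reduction you sketch). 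The only substantive difference is that you inline proofs of the two ingredients the paper simply quotes --- the dominance-maximality reduction to $\mathbf d^\star$ and the restriction of possible Erd\H{o}s--Gallai failures to $k\in\{\lfloor\alpha\rfloor,\lceil\alpha\rceil\}$ --- which is legitimate but not a different method.
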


That is, for fixed $n,c_1,c_2$, the range of $\Sigma$ for which $\dds{n}{\Sigma}{c_1}{c_2}$ is not fully graphic is contained an interval of length roughly equal to $2\sqrt{Q}$. We will show, under mild extra conditions, that for most of these $\Sigma$ values the region is also not $P$-stable. More precisely, it contains a sequence $D$ with a large value of $\pppp D$.

\begin{theorem}\label{thm:sigmabound}
    Suppose $\beta > 0$,  $r\in \mathbb N$ and
    $\varepsilon = \tfrac{3(r+3)}{\beta(c_1 - c_2)}$.
    If
    \begin{equation}\label{eq:beta}
        (1-\beta)(c_1 - c_2 + 1)^2 \geq 4c_2(n - 1 - c_1).
    \end{equation}
    and
    \begin{equation}\label{eq:sigma_abs_bound}
    \left| \frac{\Sigma - n c_2}{(c_1 - c_2)/2} - (1 + c_1 + c_2) \right|
        \leq (1 - \varepsilon)\sqrt{Q},
    \end{equation}
    then
     the simple region $\mD(n,\Sigma,c_1,c_2)$ contains a sequence $D$ such that
    \begin{equation}\label{eq:notstable}
       \pppp D \ge  2^{r/2}.
        \end{equation}
        \end{theorem}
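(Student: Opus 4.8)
The plan is to exhibit a single graphic sequence $D\in\mD(n,\Sigma,c_1,c_2)$ together with one pair $i<j$ for which $|\mathcal G(D+1^{+i}_{+j})|\ge 2^{r/2}\,|\mathcal G(D)|$; since every summand in the definition of $\pppp{D}$ is nonnegative, that one term already yields \eqref{eq:notstable}. Both $D$ and the perturbing pair will be an adaptation --- now with $\Sigma$ held fixed and all constants tracked --- of the non-$P$-stability witness used in \cite{fully} to prove Theorem~\ref{tm:old2}.

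First I would turn the analytic hypotheses into a quantitative failure of graphicality. A direct expansion gives $Q=(c_1+c_2+1)^2-4nc_2$, so the roots of $q(x)=x^2-(c_1+c_2+1)x+nc_2$ are $k_\pm=\tfrac12\bigl((c_1+c_2+1)\pm\sqrt Q\bigr)$, and for the integer $k$ nearest to $k_\Sigma:=(\Sigma-nc_2)/(c_1-c_2)$ the extremal sequence $E=(c_1^{\,k},c_2^{\,n-k})$ --- one coordinate corrected so that its sum is exactly $\Sigma$ --- violates the $k$-th \eg\ inequality by $-q(k)=\tfrac Q4-\delta^2$ with $\delta=k-\tfrac{c_1+c_2+1}2$. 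Rewritten, \eqref{eq:sigma_abs_bound} reads $\bigl|2k_\Sigma-(c_1+c_2+1)\bigr|\le(1-\varepsilon)\sqrt Q$, so $|\delta|\le(1-\varepsilon)\tfrac{\sqrt Q}2$ up to a $+\tfrac12$ rounding correction, whence the deficiency $\tfrac Q4-\delta^2$ is at least $\tfrac{\varepsilon Q}{4}-O(\sqrt Q)$. Combining this with $Q\ge\beta(c_1-c_2+1)^2$ (which is \eqref{eq:beta}) and $\varepsilon=\tfrac{3(r+3)}{\beta(c_1-c_2)}$ yields a deficiency $V=\Omega\bigl(r\,(c_1-c_2)\bigr)$ at index $k$ --- comfortably more than the $O(r)$ binary choices we will eventually need, which is exactly the slack that lets us absorb the rounding and the additive terms.

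Next I would build $D$ by spending this budget. Starting from $E$, lower $\Theta(r)$ of the top-$k$ coordinates and raise $\Theta(r)$ of the bottom ones, each by at most $c_1-c_2$, so that $D$ still has length $n$, sum $\Sigma$, and all entries in $[c_2,c_1]$, and so that the $k$-th \eg\ inequality becomes exactly tight. That tightness forces every realization of $D$ to be a clique on $[k]$ joined to its complement by a bipartite graph of prescribed degrees; choosing the modified entries so that this bipartite degree sequence is rigid (threshold-like --- tight Gale--Ryser at every prefix) makes all realizations of $D$ share one fixed skeleton, so $|\mathcal G(D)|$ is at most polynomial in $n$ (ideally $1$). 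The remaining design freedom is used to place the pair $i<j$ so that $D\mapsto D+1^{+i}_{+j}$ relaxes the governing bottleneck --- the \eg\ inequality at $k$ together with the Gale--Ryser inequalities of the bipartite block --- by an amount proportional to the budget spent; the block that was rigid then supports at least $2^{r/2}$ realizations, obtained by independently re-routing the $\Theta(r)$ ``defect'' edges created above. Combining the two counts gives $|\mathcal G(D+1^{+i}_{+j})|\ge 2^{r/2}|\mathcal G(D)|$. (It may be technically smoother to run the argument in the complementary region of degrees $n-1-d_i$, where the same perturbation reads as an edge deletion $1^{-i}_{-j}$, since $P$-stability does not depend on which perturbation operation one uses.)

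The hard part will be this last, two-sided, realization count: the construction must make the unperturbed bottleneck tight enough that no stray realizations of $D$ appear, yet leave it exactly one edge away from collapse so that a single perturbation frees up $\ge r/2$ mutually independent binary choices --- and it must do so while $\Sigma$ is pinned exactly and every degree is confined to $[c_2,c_1]$, the two constraints that force the defect region to have size $\Theta(r)$ and that fix the shape of the constant in $\varepsilon$. A secondary, purely bookkeeping, point is to route the parity correction for $\sum d_i=\Sigma$ and the additive slacks (the ``$+3$'', and the ``$+2$'' of Lemma~\ref{lem:fully}) through the argument without eroding the exponent below $r/2$.
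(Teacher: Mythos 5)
There is a genuine gap, and it sits exactly where you flag ``the hard part'': the two-sided realization count is the entire content of the theorem, and your proposal does not supply it. You assert that after spending the Erd\H{o}s--Gallai deficiency budget to make the $k$-th inequality exactly tight and the bipartite block ``rigid,'' a single perturbation $D\mapsto D+1^{+i}_{+j}$ will ``free up $\ge r/2$ mutually independent binary choices,'' so that $|\mathcal G(D+1^{+i}_{+j})|\ge 2^{r/2}|\mathcal G(D)|$. Nothing in the sketch substantiates this: for a generic tight/threshold-like sequence, relaxing the bottleneck by one unit of degree typically multiplies the number of realizations by at most a polynomial factor, not an exponential one. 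An exponential jump requires a very particular internal structure --- in the known examples, a half-graph block, for which Jerrum--McKay--Sinclair proved $\partial^{++}(\mathbf h_{r/2})=\Theta(3^{r/2})$ --- and designing (and then actually counting realizations of) such a block while keeping $\Sigma$ exact, all degrees in $[c_2,c_1]$, and $|\mathcal G(D)|$ under control is precisely what your plan leaves open. In effect the proposal reduces the theorem to an unproved claim that is as hard as the theorem itself.

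By contrast, the paper never performs such a count. It takes the half-graph estimate \eqref{eq:fully-iff3} and the construction of \cite[Theorem 8.2]{JMS92} as a black box: it builds a split-type graph $X\uplus Y\uplus R$ with $G[X]$ complete, $G[Y]$ empty, $G[X,R]$ complete bipartite, $G[Y,R]$ empty, and $G[R]$ realizing $\mathbf h_{r/2}$, whose degree sequence automatically has $\partial^{++}\ge \partial^{++}(\mathbf h_{r/2})\ge 2^{r/2}$. The real work of the proof is then purely arithmetic: showing that by varying $x=|X|$ and the number of $X$--$Y$ edges one obtains intervals $I^x$ of achievable $\Sigma$ that overlap for consecutive $x$ between $x_{\min}$ and $x_{\max}$, and that their union $[\Sigma_{\min},\Sigma_{\max}]$ covers the range \eqref{eq:sigma_abs_bound}, with the bookkeeping producing exactly $\varepsilon\le 3(r+3)/(\beta(c_1-c_2))$. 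Your deficiency computation ($\varepsilon Q/4=\Omega(r(c_1-c_2))$) is a reasonable analogue of the paper's interval bounds, but without an explicit construction whose boundary quotient is provably exponential --- or an appeal to the half-graph result, which would essentially turn your argument into the paper's --- the proof does not go through as written.
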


\begin{remark}
  Let us set $r = \log^2 n$. The $Q >0$ requirement already implies that $c_1 -c_2 \geq \Omega(\sqrt{n})$, hence the $\ep$ obtained is $o(1)$ as $n\to \infty$, as long as  $\beta > 0$ is fixed.  Thus, when the non-fully graphic $\Sigma$ range is not negligible, then almost all of these regions have a sequence $D$ for which \eqref{eq:notstable} implies
    \[\partial^{++}(D) \geq 2^{r/2} \geq n^{c\log n},
    \] so they are not $P$-stable.
\end{remark}

%%%%%%%%%%%%%%%%%%%%%%%%%%%%%
\section{Fully graphic simple regions are $P$-stable}\label{sec:fgps}
%%%%%%%%%%%%%%%%%%%%%%%%%%%%%

The main goal of this section is  to prove Theorem~\ref{tm:main}. This will be done in many steps. We begin by recalling alternating trails and their relevance for proving $P$-stability below. Section~\ref{sec:hostile} contains preliminary reductions, while Section~\ref{sec:initial} demonstrates the main method of our proof on a warm-up case. The actual proof is done in Sections~\ref{sec:refined} through~\ref{sec:caseII}. Finally, Section~\ref{sec:apply} contains an application of Theorem~\ref{tm:main}.

Let us fix the convention that the nodes of an $n$-vertex graph $G$ will be denoted by $V(G)=\{v_1,\dots, v_n\}$.

\begin{definition}\label{df:spec-trails}
    Let $G$ be a graph, and let $v$ and $v'$ be  (not necessarily distinct) vertices of $G$. An alternating trail of edges and non-edges between $v$ and $v'$ of  odd length is called \textbf{edge-deficient} if
    its first element  is a non-edge (hence the trail contains fewer edges than non-edges). Similarly,   it is called \textbf{edge-abundant} if
    its first element  is an edge, (hence  the trail contains more edges than non-edges). An edge-abundant trail of length at most $k$ is called \textbf{$k$-witness trail}. When the length is implicitly understood, we may simply call it a witness trail.
\end{definition}

The following well-known proposition shows why witness trails are relevant for comparing realizations of degree sequences.

\begin{proposition}\label{pr:find_trail}
    Assume that $D_0$ is a degree sequence of length $n$, and let $D_1 = D_0 + 1^{+p}_{+q}$.
    Suppose that the graphs $H_0$ and $H_1$ are realizations of $D_0$ and $D_1$, respectively.
    Then there exists a witness trail between $v_p$ and $v_q$ that alternates between the edges and non-edges of $H_1$.
    \end{proposition}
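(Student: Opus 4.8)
The plan is to read the trail off the symmetric difference of the two realizations. Set $F = E(H_0)\triangle E(H_1)$ and two-color its edges: call an edge of $F$ \emph{blue} if it belongs to $H_1\setminus H_0$ and \emph{red} if it belongs to $H_0\setminus H_1$. The first step is a local degree count: for every vertex $v_i$ the number of blue edges at $v_i$ minus the number of red edges at $v_i$ equals $\deg_{H_1}(v_i)-\deg_{H_0}(v_i)$, which by $D_1 = D_0 + 1^{+p}_{+q}$ is $0$ unless $i\in\{p,q\}$; it is $+1$ at both $v_p$ and $v_q$ when $p\ne q$, and $+2$ at $v_p$ when $p=q$. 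In particular, blue and red edges are equinumerous at every vertex other than $v_p$ and $v_q$, and there is at least one blue edge at $v_p$.

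The second step is to decompose $F$ into alternating trails. At each vertex $v_i\notin\{p,q\}$ fix a bijection between the blue edge-ends and the red edge-ends at $v_i$ (possible by the count above); at $v_p$ (and at $v_q$, when distinct) do the same, but leave exactly one blue edge-end unpaired (two, when $p=q$). Regarding these pairings as a perfect matching on the half-edges incident to each vertex — with a bounded number of exceptions — and combining them with the obvious matching of the two half-edges of each edge of $F$, one gets a graph on half-edges in which every component is a path or a cycle, and the path endpoints are exactly the unpaired half-edges. Since there are exactly two unpaired half-edges (one blue at $v_p$ and one at $v_q$, or two blue at $v_p$ when $p=q$), there is precisely one path component, and read as a walk in $G$ it is a trail whose endpoints are $v_p$ and $v_q$.

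The third step is to verify that this trail is a witness trail. It begins at $v_p$ with a blue edge (the unpaired half-edge there belongs to a blue edge), ends at $v_q$ with a blue edge, and its edges strictly alternate blue/red by construction; hence its length is odd and it contains exactly one more blue edge than red edge. Since blue edges are edges of $H_1$ and red edges are non-edges of $H_1$, this is an odd-length alternating trail between $v_p$ and $v_q$ whose first element is an edge of $H_1$, i.e.\ it is edge-abundant. No edge of $F$ is repeated (each half-edge is used once along the path), so it is a genuine trail, and its length is at most $|E(H_0)|+|E(H_1)|\le n(n-1)$, so it is a $k$-witness trail for any $k\ge n^2$.

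The only point requiring care — the step I would be most careful about — is the bookkeeping in the decomposition: that the half-edge matching really yields exactly one open component and that this component joins $v_p$ to $v_q$ (and not, say, $v_p$ to $v_p$ together with a separate $v_q$-to-$v_q$ loop in the $p\ne q$ case), together with the degenerate cases where the trail has length $1$. The first follows because a disjoint union of matchings has only path and cycle components with path-endpoints exactly the unmatched half-edges, of which there are exactly two; the degenerate cases (e.g.\ $v_pv_q\in E(H_1)\setminus E(H_0)$) are immediately seen to give a valid length-$1$ witness trail. Everything else is routine.
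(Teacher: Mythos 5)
Your proof is correct and takes essentially the same approach as the paper: the same degree count on the symmetric difference $E(H_0)\triangle E(H_1)$ (balanced everywhere except a surplus of $E(H_1)\setminus E(H_0)$ edges at $v_p$ and $v_q$), followed by extracting an alternating trail of $\Delta$ joining $v_p$ to $v_q$ that starts and ends with edges of $H_1$. The only difference is cosmetic: you decompose all of $\Delta$ into alternating paths and cycles via half-edge pairings, while the paper simply takes a maximal alternating trail from $v_q$ whose first edge lies in $E(H_1)\setminus E(H_0)$ and argues by the same parity count that it must terminate at $v_p$.
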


    \begin{proof}
    Observe that for each vertex $v \in V \setminus \{v_p, v_q\}$, the numbers of neighbors of $v$ in $E(H_0) \setminus E(H_1)$ and in $E(H_1) \setminus E(H_0)$ are the same.
    In contrast, both $v_p$ and $v_q$ have one more neighbor in $E(H_1) \setminus E(H_0)$ than in $E(H_0) \setminus E(H_1)$.

    Consider the symmetric difference $\Delta = E(H_0) \triangle E(H_1)$, and take a maximal trail in $\Delta$ such that
    \begin{inparaenum}[(i)]
    \item the trail alternates between edges from $E(H_1) \setminus E(H_0)$ and from $E(H_0) \setminus E(H_1)$,
    \item the first vertex of the trail is $v_q$, and
    \item the first edge of the trail belongs to $E(H_1) \setminus E(H_0)$.
    \end{inparaenum}
    Then the last vertex of the trail must be $v_p$, and the last edge must also belong to $E(H_1) \setminus E(H_0)$.
    Hence, this maximal trail satisfies the required conditions.
    \end{proof}

A key observation, already made in~\cite{JMS92}, is that the existence of short witness trails implies $P$-stability. This is also the first step in our approach.

\begin{theorem}\label{th:alter}
Assume that  $D\in \dds{n}{\Sigma}{c_1}{c_2}$, $1\le p,q\le n$ and  let $D^\diamond=D+ 1^{+p}_{+q}$.  Suppose  $G$ is a graph with
degree sequence $D^\diamond$, moreover,  $\Gamma_G(v_p)=\Gamma_G(v_q)$.
If $\dds{n}{\Sigma}{c_1}{c_2}$ is fully graphic, then  there exists an 11-witness trail between $v_p$ and $v_q$ in $G$.
\end{theorem}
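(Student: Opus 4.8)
The plan is to argue by contraposition. Suppose $G$ has \emph{no} witness trail between $v_p$ and $v_q$ of length at most $11$; I will then produce a non-graphic degree sequence of length $n$, sum $\Sigma$, maximum $c_1$ and minimum $c_2$ — i.e.\ a non-graphic member of $\dds{n}{\Sigma}{c_1}{c_2}$ — contradicting the hypothesis that this region is fully graphic. First the bookkeeping. Put $N:=\Gamma_G(v_p)=\Gamma_G(v_q)$ and $M:=V(G)\setminus(N\cup\{v_p,v_q\})$. Since $v_q\notin\Gamma_G(v_q)=\Gamma_G(v_p)$ we get $v_pv_q\notin E(G)$; since the neighbourhoods agree, $\deg_G(v_p)=\deg_G(v_q)$, hence $d_p=d_q$ and $|N|=d_p+1\ge c_2+1\ge 2$. (If $p=q$ the neighbourhood hypothesis is vacuous but $\deg_G(v_p)=d_p+2$, and the argument below runs with $N\cup\{v_p\}$ in place of $N\cup\{v_p,v_q\}$; I describe the case $p\ne q$.) The reformulation I will use is elementary: deleting the first and last edge of a witness trail from $v_p$ to $v_q$ of length $2t+1$ leaves an alternating trail of length $2t-1$ between two vertices of $N$ that begins and ends with a non-edge; conversely, any such alternating trail inside $G[N\cup M]$ extends back by one edge on each end to a witness trail of length $2t+1$. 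Thus the hypothesis says that $G$ contains no such alternating trail of length at most $9$, and unpacking this over the lengths $1,3,5,7,9$ yields a short list of forbidden local configurations in $G$.

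These forbidden configurations form a rigidity ladder. Length $1$: $N$ spans no non-edge, so $N$ is a clique; hence $N\cup\{v_p\}$ and $N\cup\{v_q\}$ are cliques, and every non-neighbour of an $N$-vertex lies in $M$. Length $3$: every vertex of $M$ with a neighbour in $N$ is adjacent to \emph{all} of $N$ — call this set of ``$N$-complete'' vertices $M_1$ and set $M_0:=M\setminus M_1$. Length $5$: $M_0$ is independent, since an edge inside $M_0$ together with the two non-edges from its endpoints to $N$ would complete a forbidden trail. Lengths $7$ and $9$ push this one or two layers further: the vertices of $M_1$ that touch $M_0$ are forced to form a clique and to be $N$-complete, their common neighbours are controlled, and so on. After these five rounds $G$ contains a set $K\supseteq N\cup\{v_p,v_q\}$ spanning $K_{|K|}$ minus the single edge $v_pv_q$, with a tightly restricted interface to the remaining $n-|K|$ vertices and with $|K|-1\le\deg_G(w)\le c_1$ for every $w\in K\setminus\{v_p,v_q\}$, so $|K|\le c_1+1$.

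From this frozen picture of $G$ I would extract the contradiction. The block $K$, the restricted interface, and the fact that the $n-|K|$ outside vertices have degrees confined to $[c_2,c_1]$ pin down enough about $(n,\Sigma,c_1,c_2)$ that one can exhibit an explicit non-graphic member $D'$ of $\dds{n}{\Sigma}{c_1}{c_2}$ — a profile too concentrated to realize, violating an \eg\ inequality (via the forced clique $K$, or dually via the large independent set $M_0$ with its small absorbing neighbourhood). Lemma~\ref{lem:fully} is the convenient device here: it describes exactly which parameter tuples admit such a $D'$, which both locates $D'$ and tells us the rigidity must push $(n,\Sigma,c_1,c_2)$ into that range. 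In the borderline cases where a first guess for $D'$ slips outside the box $[c_2,c_1]$ or misses the sum/parity target, it is repaired by redistributing a bounded amount of degree among the $n-|K|$ vertices outside $K$, which is harmless because $|K|\le c_1+1\ll n$.

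I expect the rigidity ladder and the final extraction to be where the real work lies. Keeping track of which vertices are forced $N$-complete, which blocks merge, and checking that exactly the four rounds past ``$N$ is a clique'' (trail lengths $5,7,9,11$) already freeze enough of $G$ — rather than more — calls for a careful multi-case analysis, since a layer may be empty, a vertex may belong to several blocks, or the exploration may terminate early. Converting the resulting local structure into a genuinely non-graphic sequence with the prescribed $n,\Sigma,c_1,c_2$ is the other delicate step: it uses the degree bounds and the actual value of $\Sigma$, not just the clique $K$, and one must verify that the sequence really lands in $\dds{n}{\Sigma}{c_1}{c_2}$.
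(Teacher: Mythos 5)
Your top-level frame (contrapositive: no $11$-witness trail between $v_p$ and $v_q$ forces a non-graphic sequence in the same region, contradicting full graphicity) is indeed the paper's strategy, but two of your steps break down. First, the ``rigidity ladder'' does not say what you claim. From the absence of $5$-witness trails you cannot conclude that every vertex of $M$ with a neighbour in $N$ is adjacent to all of $N$: a vertex $m$ with one neighbour $x\in N$ and several non-neighbours in $N$ creates no $5$-witness trail, since a trail $v_p - x'\cdots m - x \cdots a - v_q$ would need a non-edge from $x$ into $N=\Gamma(v_q)$, and $N$ is already a clique by the previous rung. The correct structure (Lemma~\ref{th:L1}, taken from Jerrum--McKay--Sinclair) is the finer partition $S,X,Y,Z,R$, where $Y$ consists of the vertices missing at least \emph{two} neighbours of $X$, $K=X\cup Z$ is a clique, and vertices of $R$ miss at most one vertex of $X$; your $M_1/M_0$ dichotomy conflates $Y$, $Z$ and $R$, and the later rungs of the ladder inherit this error.

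The more serious gap is the extraction step, which is exactly where the paper's work lies and which you leave as an assertion. Producing a sequence that simultaneously (i) has the exact sum $\Sigma$, (ii) has all entries in $[c_2,c_1]$, and (iii) is \emph{provably} non-graphic is the core difficulty; the paper stresses that staying in $\dds{n}{\Sigma}{c_1}{c_2}$ rather than merely in $\dd{n}{c_1}{c_2}$ is the whole problem, and your plan to ``repair'' a first guess by ``redistributing a bounded amount of degree'' is not justified and can destroy either (ii) or (iii). The paper instead modifies $G$ itself by sum-preserving hinge-flips (the downward and upward twists of Sections~\ref{sec:caseI}--\ref{sec:caseII}), proves that the degree cap $c_1$ is never violated by exhibiting forbidden length-$5$ pre-witness trails (Lemmas~\ref{lm:hinge1}, \ref{th:Sou-5}, \ref{lm:marad}), and certifies non-graphicality of the resulting sequence combinatorially via a hostile configuration (Lemmas~\ref{lem:key} and~\ref{lem:Sou-3}), with no Erdős--Gallai computation at all. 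Finally, Lemma~\ref{lem:fully} cannot play the role you assign it: it gives only \emph{necessary} conditions on $(n,\Sigma,c_1,c_2)$ when the region fails to be fully graphic, so it cannot certify that your candidate $D'$ exists or is non-graphic; the two-way tool would be Lemma~\ref{lem:LEG}, and invoking it would still require the inequality work you have not carried out. As it stands, the proposal identifies the right contrapositive shape but leaves the decisive construction unproved.
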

\begin{proof}[Proof of Theorem \ref{tm:main} from Theorem \ref{th:alter}]
Assume that $G'$  is a graph whose degree sequence is $D'=D+1^{+i}_{+j}$ for some $1\le i\le j\le n$.

If $\Gamma_{G'}(v_i)=\Gamma_{G'}(v_j)$, then  we can apply Theorem \ref{th:alter} with $G=G'$, $p=i$ and $q=j$ to obtain  a  witness trail  $P$  between $v_i$ and $v_j$.  By flipping edges and non-edges along  $P$, we obtain a graph $G^\dag$ which is a realization of $D.$

If $\Gamma_{G'}(v_i)\ne \Gamma_{G'}(v_j)$, then  there exists an alternating trail $Q$ of length $2$ between $v_i$ and $v_j$. Assume that $Q=v_iv_mv_j$, where $(v_i,v_m)$ is an edge, and $(v_m,v_j$) is a non-edge. Flipping edges along   $Q$ yields a graph $G^*$ with degree sequence  $$D^*=D'+1^{+j}_{-i}=D+1^{+i}_{+j}+1^{-i}_{+j}=D+1^{+j}_{+j}.$$

Now, applying Theorem \ref{th:alter} to $G=G^*$ with $p=q=j$, we obtain  a witness trail $P$ from  $v_j$ to $v_j$.  Flipping edges and non-edges  $P$     results a graph $G^\dagger$, which is a realization of $D.$

To reconstruct $G'$ from $G^\dagger$, we need to determine whether we were  in case  where $\Gamma_{G'}(v_i)=\Gamma_{G'}(v_j)$ or in the case $\Gamma_{G'}(v_i)\ne \Gamma_{G'}(v_j)$.

If $\Gamma_{G'}(v_i)=\Gamma_{G'}(v_j)$, we should know $P$. The  trail  $P$ contains at most 12 vertices, so there are at most $n^{12}$ possibilities.

If $\Gamma_{G'}(v_i)\ne \Gamma_{G'}(v_j)$, we need to know both $P$ and $Q$. Since $P$ starts and ends at the same vertex, there are at most $n^{11}$ possibilities for $P$. Given $P$,  we can compute $G^*$ and identify either  $v_i$ or $v_j$. To determine   $Q$, we  need to know  its two further vertices, which gives $n^2$ possibilities for  $Q$. Knowing $Q$ we can compute  $G'$. We should know which endpoint  of $Q$ is $v_i$ and which is $v_j$. In this case we have at most $2\cdot n^2\cdot n^{11}=2\cdot  n^{13}$ possibilities.

Putting together, for a given $G^\dagger$ we have at most $n^{12}+2\cdot n^{13}\le 3\cdot n^{13}$ possibilities for $G'$, i.e. the operation $G'\mapsto G^\dagger$ from $\mathcal{G}({D}+1^{+i}_{+j})$ to  $\mathcal{G}({D})$ is
    ``$3\cdot n^{13}$-to-$1$''. Hence, $\pppp{D}\le 3\cdot n^{13}$.
\end{proof}

%%%%%%%%%%%%%%%%%%%%%%%
\subsection{Hostile configurations}\label{sec:hostile}
%%%%%%%%%%%%%%%%%%%%%%%

Before continuing our analysis, we introduce the notion of  a \textbf{hostile configuration},
which enables us to show that certain  degree sequences are not graphic.

\begin{definition}\label{df:hostile}
    Let $D'' \in \dds{n}{\Sigma}{c_1}{c_2}$,
    $1\le p\le q\le n$,   and let $G' \in \G(D''+1_{+q}^{+p}),$.
    We say that $G$ has a {\bf hostile configuration}
   iff  $V(G')$ can be partitioned into four subsets $V(G')=S \uplus K' \uplus Y' \uplus R'$
    satisfying the following properties:
    \begin{enumerate}[(a)]
    \item $G'[K']$  is a complete subgraph;
    \item $G'[K';R']$ is a complete bipartite subgraph;
    \item $G'[Y']$ is an empty subgraph;
    \item $G'[Y';R']$ is an empty bipartite subgraph;
    \item $S=\{v_p,v_q\}$ and   $\Gamma(v_p)\cup \Gamma(v_q) \subseteq K'$.
    \end{enumerate}
\end{definition}

\begin{lemma}\label{lem:Sou-3}
Let $D'' \in \dds{n}{\Sigma}{c_1}{c_2}$,
and $1\le p\le q\le n$. If   $G'\in \G(D''+1_{+q}^{+p}),$ is a hostile configuration, then  the degree sequence $D''$ is not graphic.
\end{lemma}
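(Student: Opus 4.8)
The plan is to derive a contradiction from the assumption that $D''$ is graphic by using a counting argument on the degrees inside the special sets of the hostile configuration. Suppose, for contradiction, that $H$ is a realization of $D''$. The key idea is that in $G'$ the two vertices $v_p, v_q$ have all their neighbors inside $K'$, and the degrees of all vertices outside $S$ are the same in $D''$ and $D''+1^{+p}_{+q}$, while $v_p$ and $v_q$ each lost exactly one unit of degree when passing from $G'$ to $H$. So I would set $k=|K'|$, $y=|Y'|$, $s=|S|=2$, $r=|R'|$, and compare the number of edges the set $K'$ can possibly absorb in $H$ against the number of edge-endpoints that the degree sequence $D''$ forces to land in (or adjacent to) $K'$.

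The heart of the argument is the following: in $G'$, because $G'[K']$ is complete, $G'[K';R']$ is complete bipartite, and all of $\Gamma(v_p)\cup\Gamma(v_q)$ lies in $K'$, every vertex $u\in K'$ has degree at least $(k-1)+r$ just from edges to $K'\cup R'$, plus whatever it has to $Y'\cup S$. Dually, every vertex in $Y'$ has degree at most $k$ (only possible neighbors are in $K'\cup S$, and in fact $Y'$ is empty internally and has no edges to $R'$), and actually at most $k+2$ counting $S$. Meanwhile $v_p, v_q$ have degree $c_2\le d_i$ (or whatever their values are) entirely into $K'$. In the realization $H$ of $D''$, the same degree values hold except $v_p$ and $v_q$ each drop by one. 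The plan is to show that this forced "concentration" of degree in $K'$ cannot be accommodated: one counts $\sum_{u\in K'} d''_u$ from below, bounds the number of edges that can be incident to $K'$ from above (edges inside $K'$ give at most $\binom{k}{2}$, edges from $K'$ to the rest at most $k(n-k)$, but the rest of the vertices have bounded degree so the cross-edges are limited), and reaches $2|E(H)|$ being simultaneously too large and too small. Equivalently, and perhaps more cleanly, one uses the Erdős–Gallai condition (or a direct Gale–Ryser / deficiency version) applied to the set $K'$: $\sum_{u\in K'} d''_u \le k(k-1) + \sum_{w\notin K'}\min(d''_w, k)$, and the structure of the hostile configuration makes the left side strictly exceed the right.

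Concretely, I would proceed in these steps. First, record the exact degree relations: for $w\notin\{v_p,v_q\}$, $d''_w = \deg_{G'}(w)$, and $d''_{v_p} = \deg_{G'}(v_p)-1$, $d''_{v_q}=\deg_{G'}(v_q)-1$. Second, using (a)–(e), compute a lower bound on $\deg_{G'}(u)$ for $u\in K'$ (at least $(k-1)+r+$ [its edges to $Y'\uplus S$]), an exact value $\deg_{G'}(v)= $ [edges to $K'$] for $v\in R'$, and upper bounds for $v\in Y'$ (at most its edges to $K'$). Third, sum $\sum_{u\in K'} d''_u$ and split the contribution: edges within $K'$ counted twice, edges $K'\to R'$, edges $K'\to Y'$, edges $K'\to S$; since $S$'s degrees all point into $K'$, the $K'\to S$ contribution is exactly $d''_{v_p}+d''_{v_q}+2 = \deg_{G'}(v_p)+\deg_{G'}(v_q)$. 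Fourth, in the hypothetical realization $H$, bound the same sum $\sum_{u\in K'} d''_u$ from above using that in any simple graph the edges incident to $K'$ number at most $\binom{k}{2}$ (internal) plus $\sum_{w\notin K'}\min(d''_w,k)$ (external), and observe that for $w\in R'$ we have $\min(d''_w,k)=d''_w$ (since $\deg_{G'}(v)\le k$ there), for $w\in Y'$ likewise $\min(d''_w,k)=d''_w$, and for $w\in S$, $\min(d''_w,k)=d''_w$. Fifth, compare: the lower bound from the $G'$-structure exceeds the upper bound from the $H$-realization by at least the "missing" $2$ units (or more), because in $G'$ the sets $Y'$ and $R'$ have no edges between them, so the external budget $\sum_{w\in Y'\uplus R'}\min(d''_w,k)$ is already fully spent on edges into $K'$, leaving no room for the two extra half-edges that $v_p,v_q$ would need.

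I expect the main obstacle to be bookkeeping the inequality tightly enough that it is strict — i.e. making sure that the slack produced by the "$+1$" perturbation at $v_p,v_q$ is not eaten up by some slack elsewhere (for instance, if $K'$ is not "saturated" internally, or if some vertex of $R'$ or $Y'$ has spare capacity toward $S$). The clean way around this is to note that $\Gamma(v_p)\cup\Gamma(v_q)\subseteq K'$ in $G'$ forces the $S$–$(Y'\uplus R')$ block to be empty, and combined with (d) the whole external degree of $Y'\uplus R'$ is confined to $K'$; so the external edge-budget of $K'$, measured in $H$, is at least $\deg_{G'}(v_p)+\deg_{G'}(v_q)+(\text{edges }K'\!-\!Y')+(\text{edges }K'\!-\!R')$, whereas it must also be at most $(d''_{v_p}+d''_{v_q})+(\text{same }Y',R'\text{ terms})$, and $\deg_{G'}(v_p)+\deg_{G'}(v_q) = d''_{v_p}+d''_{v_q}+2 > d''_{v_p}+d''_{v_q}$. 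That strict inequality is the contradiction, so $D''$ cannot be graphic. Finally I would double-check the degenerate cases $p=q$ (so $|S|=1$ and the perturbation adds $2$ to a single coordinate) and $K'=\emptyset$ (then $v_p,v_q$ are isolated in $G'$, forcing $d''_{v_p}$ or $d''_{v_q}$ negative, an immediate contradiction), to confirm the argument degrades gracefully.
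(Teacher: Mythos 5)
Your route is genuinely different from the paper's: the paper deduces from the (assumed) graphicality of $D''$, via Proposition~\ref{pr:find_trail}, an edge-abundant alternating trail from $v_p$ to $v_q$ in $G'$, and then kills it by a parity induction using (a)--(e) — odd-indexed vertices of the trail must lie in $K'$ and even-indexed ones in $S\cup Y'$, so the final vertex (of odd index) cannot be $v_q\in S$. Your degree-counting argument on $K'$ is more elementary and self-contained, and its core is sound: in $G'$ one has $\sum_{u\in K'}d''_u = k(k-1)+e_{G'}(K',Y')+k|R'|+\deg_{G'}(v_p)+\deg_{G'}(v_q)$ (all neighbors of $S$ lie in $K'$, so the $S$-contribution is $d''_{v_p}+d''_{v_q}+2$), while in any realization $H$ of $D''$ the same sum is at most $k(k-1)+\sum_{w\notin K'}\min(d''_w,k)$, and the $S$-part of that budget is only $d''_{v_p}+d''_{v_q}$; the two-unit deficit is your contradiction, also in the degenerate case $p=q$.

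However, one step as you state it is not justified: you claim that vertices of $R'$ have all their $G'$-degree into $K'$, hence $d''_w\le k$ and $\min(d''_w,k)=d''_w$ there. Definition~\ref{df:hostile} places no constraint on $G'[R']$, and in the paper's applications of the lemma $R'$ genuinely carries internal edges (e.g.\ $R'=R_0^1$ in Case~I, where the downward twists even add edges inside $R_0$). If you literally replace $\min(d''_w,k)$ by $d''_w$ on $R'$, your upper bound inflates by $2e_{G'}(R')$ and the strict inequality disappears as soon as $R'$ contains a single edge. The repair is immediate: for $w\in R'$ use $\min(d''_w,k)\le k=|\Gamma_{G'}(w)\cap K'|$, so the $R'$-budget toward $K'$ in $H$ is at most $k|R'|=e_{G'}(K',R')$, exactly matching the $G'$-side term; the $Y'$ and $S$ estimates are fine as you give them, since by (c), (d), (e) those vertices really do send all their degree into $K'$. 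With that one correction your counting proof is complete; compared with the paper's argument it avoids the trail machinery entirely, at the cost of being specific to this lemma, whereas the paper's parity-of-trails method reuses the witness-trail framework that drives the rest of Section~\ref{sec:fgps}.
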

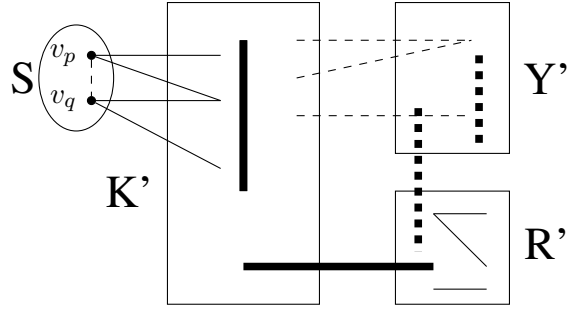
\begin{figure}[h]
\begin{center}
\begin{tikzpicture}[scale=1]
\tikzstyle{vertex}=[draw,circle,fill=black,minimum size=3,inner sep=0]
\draw  (-.2,5) ellipse (14pt and 20pt);
\node[vertex] (vp) at (0,5.3) [label=west:{$v_p$}] {};
\node[vertex] (vq) at (0,4.7) [label=west:{$v_q$}] {};
\node at  (-.9, 5) {\LARGE S};
\draw[dashed] (vp) -- (vq);
\draw (1,2) rectangle (3,6);
\node at (.5,3.5) {\LARGE K'};
\draw (vp) -- (1.7, 5.3);
\draw (vp) -- (1.7, 4.7);
\draw (vq) -- (1.7, 4.7);
\draw (vq) -- (1.7, 3.8);
\draw[line width=1mm] (2,5.5) -- (2, 3.5);
\draw (4,4) rectangle (5.5,6);
\node at (6,5) {\LARGE Y'};
\draw[line width=1mm] (2,5.5) -- (2, 3.5);
\draw[dashed] (2.7,5) -- (5,5.5);
\draw[dashed] (2.7,5.5) -- (5,5.5);
\draw[dashed] (2.7,4.5) -- (5,4.5);
\draw[line width=1mm, dashed] (5.1,5.3) -- (5.1,4.1);
\draw (4,2) rectangle (5.5,3.5);
\node at (6,2.8) {\LARGE R'};
\draw[line width=1mm, dashed] (4.3,4.6) -- (4.3,2.7);
\draw[line width=1mm] (2,2.5) -- (4.5, 2.5);
\draw (4.5,2.2) -- (5.2,2.2);
\draw (4.5,3.2) -- (5.2,3.2);
\draw (4.5,3.2) -- (5.2,2.5);
\end{tikzpicture}
\end{center}
\caption{The thick dashed lines indicate no edge at all, the solid thick lines indicate all edges.}	
\end{figure}
\begin{proof}
Assume on the contrary that $D''$ is graphic.
Then, by Proposition \ref{pr:find_trail}, there is an edge-abundant trail    $v_p=x_0,x_1,\ldots, x_{2m}, x_{2m+1}=v_q$ of odd length  that alternates between the edges and non-edges of $G'$.

Then, by induction on $0\le j\le 2m+1$, we can show that $x_j\in S\cup Y'$ whenever $j$ is even, and $x_j\in K'$ whenever $j$ is odd.  Indeed, if $x_{2i}\in S\cup Y'$ and $(x_{2i},x_{2i+1})$ is an edge, then $x_{2i+1}\in K'$ by (c), (d) and (e). Similarly, if $x_{2i+1}\in K'$ and $(x_{2i+1},x_{2i+2})$ is a non-edge, then $x_{2i+2}\in S\cup Y'$ by (a) and (b).

Hence, $x_{2m+1}\ne v_q$, which  a contradiction.
\end{proof}

The usefulness of hostile configurations becomes apparent from the following statement, which is at the heart of our argument.

\begin{lemma}\label{lem:key}
   Assume that  $D\in \dds{n}{\Sigma}{c_1}{c_2}$, $1\le p,q\le n$ and  let $D^\diamond=D+ 1^{+p}_{+q}$.  Let $G$ be a realization of
$D^\diamond$ where $\Gamma_G(v_p)=\Gamma_G(v_q)$. Suppose there is no 11-witness trail between $v_p$ and $v_q$ in $G$. Then there is a degree sequence $D''\in \dds{n}{\Sigma}{c_1}{c_2}$ for which $D'' + 1^{+p}_{+q}$ has a realization $G'$ having a hostile configuration.
\end{lemma}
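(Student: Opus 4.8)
We start from the hypothesis: $G$ is a realization of $D^\diamond = D + 1^{+p}_{+q}$ with $\Gamma_G(v_p) = \Gamma_G(v_q)$, and there is no $11$-witness trail between $v_p$ and $v_q$ in $G$. The plan is to use the non-existence of short witness trails to grow, layer by layer around $S = \{v_p, v_q\}$, the four-part partition $V(G) = S \uplus K' \uplus Y' \uplus R'$ demanded by Definition~\ref{df:hostile}, and then, if the partition so obtained does not yet realize a degree sequence inside $\dds{n}{\Sigma}{c_1}{c_2}$, to repair it by a small number of switches that preserve the hostile structure while bringing the degree sequence back into the region. The final sequence $D''$ is then the degree sequence of $G'$ with the perturbation at $v_p, v_q$ undone, and Lemma~\ref{lem:Sou-3} applies to it.

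\textbf{Step 1: building the layers.} Set $K' := \Gamma_G(v_p) = \Gamma_G(v_q)$ (this is the common neighborhood, which is exactly what property (e) requires). The absence of witness trails of length $\le 11$ forces strong structure on the edges incident to $K'$. Concretely, a $3$-witness trail would be an edge $v_p x$, a non-edge $x y$, and an edge $y v_q$ with $x, y \in K'$; its absence means $G[K']$ is complete (property (a)). A $5$-witness trail starting $v_p x$, then a non-edge $x u$ with $u \notin K'$, then an edge $u x'$, then a non-edge $x' y$, then an edge $y v_q$ — chasing this out, absence of $5$-witness trails forces every vertex $u \notin K' \cup S$ that is a non-neighbor of some vertex of $K'$ to be a non-neighbor of \emph{all} of $K'$; call the set of such vertices $Y'$. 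Absence of $7$- and $9$-witness trails then forces $G[Y']$ to be empty (property (c)) and $G[Y'; R']$ to be empty (property (d)), where $R' := V(G) \setminus (S \cup K' \cup Y')$ consists of vertices adjacent to all of $K'$; property (b), that $G[K'; R']$ is complete bipartite, holds by the definition of $R'$. I expect the careful bookkeeping of exactly which trail lengths force which of (a)--(d), and making sure all of $K'$ is "used up'' consistently, to be the first place where one must be meticulous; the length bound $11$ (rather than $9$) presumably leaves room for the edge cases where $v_p$ or $v_q$ coincide or where the trail revisits $S$.

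\textbf{Step 2: fixing the degree sequence.} The graph $G'$ we want is obtained from $G$ by performing switches that do not touch $S$ and that preserve properties (a)--(e) — for instance swapping a $K'$--$Y'$ non-edge pattern with an $R'$-based one, which moves degree between $Y'$ and $R'$ without creating any $K'$-incident non-edge or $Y'$-incident edge. Since $G$ already realizes $D^\diamond = D'' + 1^{+p}_{+q}$ for the \emph{not-necessarily-in-region} sequence $D'' := D$, the only issue is whether $D \in \dds{n}{\Sigma}{c_1}{c_2}$ — and it is, by hypothesis! So in fact $D'' := D$ and $G' := G$ already work, \emph{provided} $G$ genuinely has the hostile configuration. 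Thus the real content is entirely Step 1: proving that the forced layered structure is a hostile configuration on the nose. The main obstacle, then, is not the degree arithmetic but verifying that $K' \cup Y' \cup R' \cup S$ is genuinely a partition of \emph{all} of $V(G)$ — i.e.\ that no vertex is left "ambiguous'' — and that $\Gamma_G(v_p) \cup \Gamma_G(v_q) \subseteq K'$ with equality, which is immediate from $\Gamma_G(v_p) = \Gamma_G(v_q) = K'$. I would close by invoking Lemma~\ref{lem:Sou-3} with this $D'' = D$ and $G' = G$ to conclude. (If, contrary to the above, the witness-trail analysis only yields the hostile structure on a \emph{modification} of $G$ whose degree sequence has drifted, then Step 2 becomes genuinely necessary and one must check that $O(1)$ region-restoring switches can always be found within the $Y' \cup R'$ part without disturbing (a)--(e) or $S$; this is where I would expect the second real difficulty to lie.)
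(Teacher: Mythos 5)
There is a genuine gap, and it sits exactly where you place the main content: your Step 1 claim that the absence of short witness trails forces $G$ itself into a hostile configuration is false. What the hypothesis actually yields (the paper's Lemma~\ref{th:L1}, from Jerrum--McKay--Sinclair) is much weaker: with $X=\Gamma_G(v_p)=\Gamma_G(v_q)$, $Y$ the vertices missing at least two elements of $X$, $K=X\cup Z$ and $R$ the rest, one only gets that $K$ is a clique, $Y$ is independent, $G[Y,R]$ is empty, and every $r\in R$ misses \emph{at most one} vertex of $X$. In particular a vertex of $R$ may be adjacent to all but exactly one vertex of $K$ (the sets $R_i$ of the paper's refined partition), so your claimed dichotomy ``adjacent to all of $K'$ or to none of it'' does not follow from the absence of $5$- (or even $11$-) witness trails, and $R$ may contain edges (inside $R_0$ or inside some $R_i$). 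Hence properties (b)--(d) of Definition~\ref{df:hostile} fail for $G$ in general. There is also a structural reason your conclusion ``$D''=D$, $G'=G$'' cannot be right: a hostile configuration excludes witness trails of \emph{every} length (this is exactly Lemma~\ref{lem:Sou-3} via Proposition~\ref{pr:find_trail}), whereas the hypothesis only excludes trails of length at most $11$. In the intended application $D$ is graphic, so $G$ always carries some (possibly long) witness trail and therefore can never itself be a hostile configuration; taking $D''=D$ would force the false conclusion that $D$ is non-graphic.

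Consequently the modification of $G$ is not an optional ``repair'' but the entire content of the lemma, and your parenthetical fallback ($O(1)$ region-restoring switches inside $Y'\cup R'$) does not capture it. The paper refines $R$ into $R_0$, $R_1,\dots,R_k$, $R_\infty$, proves via forbidden \emph{pre-witness} trails (Lemma~\ref{lm:sou1-2} and its consequences) that edges can occur in at most one $R_i$, and then performs a possibly long sequence of hinge-flips: downward twists moving $R_N$--$R_0$ edges down into $R_0$ (Case I), preceded in Case II by upward twists along spanning trees of the components of $G[R_i]$ that lift edges of $R_i$ onto the vertex $v_i\in K$. These operations change individual degrees (only the sum $\Sigma$ is automatically preserved), and the delicate point --- which the paper explicitly flags as the difference from the very simple region case --- is proving that every intermediate degree stays in $[c_2,c_1]$, so that the final sequence lies in $\dds{n}{\Sigma}{c_1}{c_2}$; this is done by deriving a forbidden length-$5$ pre-witness trail from any hypothetical degree overflow. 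None of this machinery, nor any substitute for it, appears in your proposal, so the proof as written does not establish the lemma.
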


Theorem~\ref{th:alter} is now a trivial consequence of Lemmas~\ref{lem:Sou-3} and~\ref{lem:key}. The rest of this section is devoted to the proof of Lemma~\ref{lem:key}.

%%%%%%%%%%%%%%%%%%%%%%%
\subsection{The initial structure and a warm-up case}\label{sec:initial}
%%%%%%%%%%%%%%%%%%%%%%%

From here on we use the notation and assumptions of Lemma~\ref{lem:key}. We start by recalling a result from~\cite{JMS92} showing that the graph $G$ under consideration has a certain structure. To do so, we define several pairwise disjoint subsets of the vertex set (introduced originally in \cite{JMS92}): Let
\begin{displaymath}
    S=\{v_p,v_q\},\qquad X= \Gamma(v_p) = \Gamma(v_q), \qquad Y=\{y\in V: |X\setminus \Gamma_G(y)|\ge 2\},
\end{displaymath}
furthermore let
\begin{displaymath}
    Z=\Gamma_G(Y)\setminus X,\qquad R=V\setminus (S\cup X\cup Y\cup Z),\qquad K=X\cup Z.
\end{displaymath}
The following statements were already explicitly formulated and proved  by Jerrum, McKay and Sinclair in \cite[Proof of Lemma 1]{JMS92} and were also used in
\cite[Proof of Lemma 4.5]{fully}.
\begin{lemma}\label{th:L1}
    Using the notation and terminology of Lemma~\ref{lem:key},
    assume the weaker condition that there is no $7$-witness trail between $v_p$ and $v_q$ in $G$
    (\emph{in Lemma~\ref{lem:key} we assume the stronger condition that there is no $11$-witness trail}).
    Then we have:
\begin{enumerate}[(i)]
\item $v_pv_q$ is not an edge;
\item $G[Y]$ is independent;
\item $G[K]$ is a clique;
\item $G[Y,R]$ is empty bipartite graph;
\item \label{en:Xr}$|X\setminus \Gamma_G(r)|\le 1$ for each $r\in R$.
\end{enumerate}
\end{lemma}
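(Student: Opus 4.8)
The plan is to prove each clause by contradiction, converting a failure of the asserted structure into a short witness trail between $v_p$ and $v_q$ in $G$ — except that (iv) and (v) follow at once from the definitions. For (v): if some $r\in R$ had $|X\setminus\Gamma_G(r)|\ge 2$, then $r\in Y$ by the very definition of $Y$. For (iv): if $y\in Y$, $r\in R$ and $yr\in E(G)$, then $r\in\Gamma_G(Y)$, and since $r\notin X$ this gives $r\in Z$. Either conclusion contradicts the pairwise disjointness of $S,X,Y,Z,R$. For (i): were $v_pv_q\in E(G)$ (which in particular forces $p\ne q$), the single edge $v_pv_q$ would already be a $1$-witness trail, contradicting the hypothesis. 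The substance is (ii) and (iii); throughout I would use only the two facts available for free — $\Gamma_G(v_p)=\Gamma_G(v_q)=X$ exactly, and every $y\in Y$ is non-adjacent to at least two vertices of $X$.

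For (iii) I would argue in three stages, ordered so that each stage's disjointness by-products feed the next. \textbf{Stage 1:} $G[X]$ is a clique — a non-edge $ab$ with $a\ne b$ in $X$ gives the $3$-witness trail $v_p,a,b,v_q$ (edge, non-edge, edge). From this, $X\cap Y=\emptyset$ (a vertex of $X\cap Y$ would be non-adjacent to two vertices of the clique $X$), and trivially $v_p,v_q\notin X\cup Y$, so $S,X,Y$ are pairwise disjoint. \textbf{Stage 2:} $G[X,Z]$ is complete bipartite — given $z\in Z$ and a witness $y\in Y$ with $zy\in E(G)$, a missing edge $za$ ($a\in X$) together with any $b\in(X\setminus\Gamma_G(y))\setminus\{a\}$ yields the $5$-witness trail $v_p,a,z,y,b,v_q$; one checks $z\notin\{v_p,v_q\}$ (else $y\in\Gamma_G(v_p)=X$, impossible), so this is a genuine trail. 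Hence $Y\cap Z=\emptyset$ and $Z\cap S=\emptyset$. \textbf{Stage 3:} $G[Z]$ is a clique — for $z\ne z'$ in $Z$ with $zz'\notin E(G)$, pick $y,y'\in Y$ with $zy,z'y'\in E(G)$; when $y=y'$ take $v_p,a,y,z,z',y,b,v_q$ with $a\ne b$ in $X\setminus\Gamma_G(y)$, and when $y\ne y'$ take $v_p,a,y,z,z',y',b,v_q$ with $a\in X\setminus\Gamma_G(y)$, $b\in X\setminus\Gamma_G(y')$. In each case the pattern is edge/non-edge/edge/non-edge/edge/non-edge/edge with the non-edge $zz'$ in the centre, and the disjointness relations from Stages~1--2 certify that the seven steps are distinct edges, so this is a $7$-witness trail. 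Combining the three stages, $G[K]=G[X\cup Z]$ is a clique, which is (iii).

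For (ii): if $G[Y]$ were not independent, pick $y\ne y'$ in $Y$ with $yy'\in E(G)$ and then $a\in X\setminus\Gamma_G(y)$ and $b\in(X\setminus\Gamma_G(y'))\setminus\{a\}$; the sequence $v_p,a,y,y',b,v_q$ is a $5$-witness trail (using $X\cap Y=\emptyset$ for edge-distinctness), a contradiction, so $G[Y]$ is independent.

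These are essentially the arguments in the proof of Lemma~1 of~\cite{JMS92}. I expect the only delicate points to be the bookkeeping: carrying out the deductions in the stated order so that each exhibited alternating walk is seen to repeat no edge (hence is a genuine trail), and checking the intersections $X\cap Y$, $Y\cap Z$, $S\cap(X\cup Y\cup Z)$ are empty exactly when needed. The genuinely most involved step is Stage~3 — the only one requiring a case split and the only one producing a trail of length $7$, which is precisely why the hypothesis is stated with the bound $7$; every other trail above has length at most $5$. (If $p=q$, the same walks, now closed at $v_p=v_q$, work verbatim, and (i) is vacuous.)
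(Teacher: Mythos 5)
Your proof is correct, and it fills in exactly the argument the paper itself omits: the paper does not reprove this lemma but simply cites the proof of Lemma~1 in \cite{JMS92} (and its use in \cite{fully}), and your stages (i)--(iii) reconstruct that argument faithfully, while your observation that (iv) and (v) are purely definitional consequences of how $Y$, $Z$ and $R$ are declared is also right. One small bookkeeping point: in Stage~3, case $y\ne y'$, you should also insist $b\ne a$ (possible since $|X\setminus\Gamma_G(y')|\ge 2$), otherwise in the closed case $p=q$ the first and last edges $(v_p,a)$ and $(b,v_q)$ could coincide; with that one-word fix the ``works verbatim for $p=q$'' remark is accurate, as you already made the analogous choice in (ii) and Stage~2.
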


At this point we are in the position to give a short, calculation free proof for \cite[Lemma 4.5]{fully}. While this will not be directly used in the proof of Lemma~\ref{lem:key}, the method serves as an illustration of our approach.

\begin{lemma*}
Assume that  $D\in \dd{n}{c_1}{c_2}$, $1\le p,q\le n$ and  let $D^\diamond=D+ 1^{+p}_{+q}$.  Suppose  $G$ is a graph with vertex set $V=\{v_1,\dots v_n\}$ and
degree sequence $D^\diamond$, moreover,  $\Gamma_G(v_p)=\Gamma_G(v_q)$. If $\dd{n}{c_1}{c_2}$ is fully graphic, then  there exists a 7-witness trail between $v_p$ and $v_q$ in $G$.
\end{lemma*}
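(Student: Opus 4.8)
The strategy is to argue by contraposition: assume no $7$-witness trail exists between $v_p$ and $v_q$ in $G$, and produce a degree sequence in $\dd{n}{c_1}{c_2}$ that is not graphic, contradicting the fully graphic assumption. The natural candidate to build a hostile-type configuration out of is the structure already handed to us by Lemma~\ref{th:L1}: the partition $V = S \uplus X \uplus Y \uplus Z \uplus R$ with $K = X \cup Z$, for which we know $G[K]$ is a clique, $G[Y]$ is independent, $G[Y,R]$ is empty, $\Gamma(v_p) = \Gamma(v_q) = X \subseteq K$, and $|X \setminus \Gamma_G(r)| \le 1$ for each $r \in R$. Comparing with Definition~\ref{df:hostile}, the only discrepancies are that $R$ may not be fully joined to $K$ (each $r \in R$ can miss at most one vertex of $X$, and nothing is promised about edges between $r$ and $Z$), and $Y$ may have neighbours in $Z$. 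So the plan is to \emph{modify $G$ by a bounded number of edge swaps} — each of which changes the degree sequence by a perturbation $1^{\pm i}_{\pm j}$ and keeps it inside the very simple region $\dd{n}{c_1}{c_2}$ — until an honest hostile configuration is reached; then Lemma~\ref{lem:Sou-3} gives a non-graphic sequence in the region.

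\textbf{Key steps.}
First I would record the structural facts from Lemma~\ref{th:L1} and observe that $S \uplus K \uplus Y \uplus R$ already satisfies properties (a), (c), (d), and (e) of Definition~\ref{df:hostile} with $K' = K$, $Y' = Y$, $R' = R$; the only failure is property (b), the complete bipartiteness of $G[K;R]$. Second, I would quantify the defect: by (v) of Lemma~\ref{th:L1}, every $r \in R$ is adjacent to all of $X$ except possibly one vertex $x_r \in X$; and we have no control yet on $R$–$Z$ edges, so a priori $r$ may miss many vertices of $Z$. The crucial sub-claim is that, \emph{because there is no short witness trail}, these defects cannot actually be large — a vertex $r \in R$ missing some $z \in Z$ together with the edges $v_p x$, $x z$ (for $x \in X$, using $xz \in E(G[K])$) and the non-edge $z r$ would begin to assemble a short edge-abundant trail from $v_p$ toward a vertex in $Y$ or back toward $v_q$; pushing this one or two steps further (using $G[Y]$ independent and $G[Y,R]$ empty to force the trail to close up) should yield a witness trail of length at most $7$. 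So in fact the region structure is much closer to hostile than it first appears, and only a few local repairs remain. Third, for whatever bounded defects survive, I would perform explicit swaps: e.g. if $r \in R$ misses $x_r \in X$, swap along a suitable alternating $4$-cycle through $r$, $x_r$, another vertex of $K$, and a vertex outside $K \cup \{r\}$ to add the edge $r x_r$, checking degrees stay in $[c_2,c_1]$. Finally, after finitely many such swaps the graph becomes a hostile configuration for some $D'' \in \dd{n}{c_1}{c_2}$ with $D'' + 1^{+p}_{+q}$ realized by the modified graph, and Lemma~\ref{lem:Sou-3} finishes.

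\textbf{Main obstacle.}
The hard part is the trail-chasing sub-claim: showing that the \emph{absence} of a $7$-witness trail forces the $K$–$R$ bipartite graph to be (nearly) complete and forces $Z$ to behave well, so that only boundedly many repairs are needed — and that each repair swap can be chosen so the perturbed sequence stays in $\dd{n}{c_1}{c_2}$ and the swaps do not reintroduce a witness trail. One must be careful that a repair swap aimed at fixing one defect does not create a new edge or non-edge that, combined with the rigid clique/independent-set structure, spawns a short alternating trail between $v_p$ and $v_q$; keeping the length budget of $7$ under control while juggling the four blocks $S, K, Y, R$ (and the auxiliary set $Z \subseteq K$) is where the real work lies. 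A secondary nuisance is the boundary bookkeeping — verifying at each swap that no degree leaves the window $[c_2, c_1]$, which is where having a \emph{very simple} region (no constraint on $\Sigma$) makes life easier than in the simple-region case, since we need not also track the sum.
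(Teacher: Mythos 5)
Your high-level framework (contrapositive, the partition of Lemma~\ref{th:L1}, a hostile configuration, then Lemma~\ref{lem:Sou-3}) matches the paper, but the core of your plan has a genuine gap. You keep $R'=R$ and propose to \emph{repair} condition (b) of Definition~\ref{df:hostile} by adding the missing $K$--$R$ edges, resting on the sub-claim that the absence of a $7$-witness trail forces $G[K;R]$ to be nearly complete, so that only boundedly many repairs are needed. That sub-claim is unsubstantiated and false in general: Lemma~\ref{th:L1}\eqref{en:Xr} only controls the missing edges into $X$, while a vertex of $R$ may miss arbitrarily many vertices of $Z$ (in the paper's refined analysis this is exactly the possibly large set $R_\infty$ of vertices missing at least two vertices of $K$); what the absence of short witness trails buys is structural information (independence of $R_\infty$, no edges between distinct $R_i$, etc.), not a bound on the number of missing $K$--$R$ pairs. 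The repair direction is also blocked by the degree cap: a vertex $r\in R$ or $z\in Z$ may already have degree $c_1$, so the edges needed to complete $G[K;R]$ cannot in general be added within $\dd{n}{c_1}{c_2}$, and degree-preserving swaps through such saturated vertices must delete other edges, reintroducing precisely the trail- and degree-bookkeeping you hoped to keep bounded.

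The paper's proof avoids all of this with one move your plan misses: because the region is \emph{very simple}, $\Sigma$ is unconstrained, so one simply \emph{deletes} every edge lying inside $R$ and takes $K'=K=X\cup Z$, $Y'=Y\cup R$, $R'=\emptyset$. Conditions (b) and (d) become vacuous, and (a), (c), (e) hold by Lemma~\ref{th:L1}, so the resulting graph $G'$ is a hostile configuration; by Lemma~\ref{lem:Sou-3} the sequence $D''$ with $D''+1^{+p}_{+q}$ equal to the degree sequence of $G'$ is not graphic. It stays in $\dd{n}{c_1}{c_2}$ because deleting edges cannot push a degree above $c_1$, while each $r\in R$ retains its at least $|X|-1\ge c_2$ neighbours in $X$ by \eqref{en:Xr}. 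Thus no trail-chasing beyond Lemma~\ref{th:L1} and no edge additions are needed: the whole point of the warm-up case is that, absent a constraint on $\Sigma$, the defective block $R$ is dumped wholesale into $Y'$ rather than repaired, and it is only in the simple-region setting of Lemma~\ref{lem:key} that the kind of careful hinge-flip surgery you envision becomes necessary.
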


\begin{proof}
    The proof is contrapositive: we assume that there is no $7$-witness trail between $v_p$ and $v_q$ in $G$, and we will
    derive that $\dd{n}{c_1}{c_2}$ is not fully graphic.

    Fix $\Sigma$ with $D\in \dds{n}{\Sigma}{c_1}{c_2}$.
    Consider the sets $(S,X,Y,Z,R)$ defined in  Lemma \ref{th:L1}.
    Consider the graph $G'$ obtained from $G$ by removing all the edges from $R$. Let $D'$ be the degree sequence of $G'$.

    Observe that  $G'$ has a hostile configuration with $S=\{v_p,v_q\}$, $K'=X\cup Z$, $Y'=Y\cup R$ and $R'=\emptyset$. Thus $D'-1^{+p}_{+1}$ is not graphic.

    Since   $\deg_{G'}(v)=\deg_{G}(v)$   for $v\in V\setminus R$, and $\deg_{G'}(r)\ge |X|-1\ge n_2$ by \eqref{en:Xr}, we have  $D'-1^{+p}_{+p}\in \dd{n}{c_1}{c_2}$. Thus $\dd{n}{c_1}{c_2}$ is not fully graphic.
\end{proof}

Theorem~\ref{tm:old} now follows from the combination of this Lemma with Lemma~\ref{lem:Sou-3} in the same way as Theorem~\ref{tm:main} is obtained from Theorem~\ref{th:alter}.

    Although the proof of Lemma~\ref{lem:key} is substantially more intricate, its underlying structure closely parallels that of the above proof:
    First, assuming the non-existence of an $11$-witness trail, we refine the structural analysis by further partitioning the set $R$ into smaller subsets.
    Then, using hinge-flip operations, we modify the graph $G$ in such a way that
    (1) the degree sequence of the resulting graph remains in $\dds{n}{\Sigma}{c_1}{c_2}$, and
    (2) the final graph exhibits a hostile configuration.
    The core difficulty of the argument lies in the fact that it is not sufficient to remain within $\dd{n}{c_1}{c_2}$;
    we must ensure that all constructions stay within the more restrictive class $\dds{n}{\Sigma}{c_1}{c_2}$.

%%%%%%%%%%%%%%%%%
\subsection{Proof of Lemma~\ref{lem:key}: the refined structure}\label{sec:refined}
%%%%%%%%%%%%%%%%

We will use the following notation: if $x,y,z,u,v,w$ form an alternating edge-deficient trail in $G$ starting with a non-edge, then we write:
\begin{displaymath}
x\cdots y - z \cdots u -v\cdots w.
\end{displaymath}
Any edge-abundant alternating trail can be described analogously.

\begin{definition}\label{df:testifier}
    An edge-deficient trail with length at most five is called \textbf{pre-witness trail}.
\end{definition}
\begin{lemma}\label{lm:sou1-2}
(1)    There is no pre-witness trail in $G[K\cup R]$ between two different vertices of $K$. \\
\noindent (2) There is no pre-witness trail in $G[K\cup R]$ from a vertex
$w\in K$ to itself provided $|\Gamma_G(w)\cap Y|\ge 2.$
\end{lemma}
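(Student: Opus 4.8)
The plan is to argue by contradiction. Suppose $G[K\cup R]$ contains a forbidden pre-witness trail $T$ — in case (1) an edge-deficient trail of length at most five between two distinct vertices $a,b\in K$, and in case (2) such a trail from a vertex $w\in K$ with $|\Gamma_G(w)\cap Y|\ge 2$ back to itself. From $T$ I will build an $11$-witness trail between $v_p$ and $v_q$, contradicting the standing hypothesis of Lemma~\ref{lem:key} that no such trail exists.

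The basic tool is a short \emph{connector}: for every $u\in K$ there is an edge-abundant alternating trail of length at most three from $v_p$ to $u$, and symmetrically from $v_q$ to $u$. If $u\in X$ the single edge $v_pu$ does the job, since $X=\Gamma_G(v_p)=\Gamma_G(v_q)$. If $u\in Z=\Gamma_G(Y)\setminus X$, pick a neighbour $y\in Y$ of $u$ and, using the defining property $|X\setminus\Gamma_G(y)|\ge 2$ of $Y$, pick $x\in X\setminus\Gamma_G(y)$; then $v_p - x \cdots y - u$ (an edge, a non-edge, an edge) is the desired connector. The key feature is that the vertices $y$ and $x$ may be chosen with considerable freedom. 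Now splice a connector from $v_p$ to one endpoint of $T$, then $T$, then a connector from the other endpoint of $T$ to $v_q$. Since $T$ is edge-deficient it begins and ends with a non-edge, so adjoining an edge at each end keeps the walk alternating; the result has odd length at most $3+5+3=11$ and begins and ends with an edge, hence — provided it is a genuine trail — it is an $11$-witness trail between $v_p$ and $v_q$, the contradiction we want.

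The crux of the argument, and the only place the hypotheses are genuinely used, is checking that the spliced walk uses no edge twice. No edge of $T$ can coincide with a connector edge: $T$ lies inside $G[K\cup R]$, so its edges are incident only to vertices of $K\cup R$, whereas every connector edge is incident to a vertex of $S$ or of $Y$, and $K\cup R$ is disjoint from $S\cup Y$. To keep the two connectors edge-disjoint from each other one chooses their $X$-vertices distinct — possible because each set $X\setminus\Gamma_G(y_i)$ has at least two elements — and, when both connectors have length three and share their common endpoint in $Z$ (this is exactly case (2) with $w\in Z$), one also chooses their $Y$-vertices distinct; this last step is precisely where the hypothesis $|\Gamma_G(w)\cap Y|\ge 2$ is spent. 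In case (1) the two endpoints $a\ne b$ of $T$ are already distinct, so the two connectors' $Y$-vertices may safely coincide.

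A little extra bookkeeping is needed when $v_p=v_q$, in which case the splice yields a closed trail at $v_p$: if a $T$-endpoint $u$ lies in $X$ one takes the opposite connector's $X$-vertex different from $u$ (again possible by the size-two property), and in case (2) with $w\in X$ one routes one of the two connectors through a $Y$-neighbour of $w$ — available under the hypothesis — instead of traversing $v_pw$ from both sides. With these choices a routine inspection of the finitely many pairs of connector edges confirms edge-disjointness, so the spliced walk is an $11$-witness trail, contradicting Lemma~\ref{lem:key} and proving both (1) and (2). I expect the disjointness bookkeeping just described to be the only real obstacle; the splicing itself, the alternation at the junctions, and the length count are all immediate.
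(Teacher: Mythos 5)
Your proof is correct and follows essentially the same route as the paper: assume a forbidden pre-witness trail exists, splice onto it short alternating connectors from $v_p$ and $v_q$ through $X$ (and a $Y$-neighbour where needed), and use $|\Gamma_G(w)\cap Y|\ge 2$ together with $|X\setminus\Gamma_G(y)|\ge 2$ to choose the two connectors edge-disjoint, contradicting the absence of an $11$-witness trail. The only difference is minor: the paper always uses the length-three connector $v_p - x \cdots y - w$ (tacitly picking a $Y$-neighbour of each trail endpoint), whereas you use the direct edge $v_pw$ when the endpoint lies in $X$, which is why you need the extra $v_p=v_q$ bookkeeping that the paper's choice of distinct $x_1\ne x_2$ sidesteps.
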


\begin{proof}
(1) Assume on the contrary that  $w_1\cdots a_1-a_2\cdots a_3-a_4 \cdots w_2$ is a pre-witness trail in $G[K\cup R]$ between two different points of $K$. Pick $y_i\in {Y\cap \Gamma_G(w_i)}$ for $i=1,2$. Since $|X\setminus \Gamma_G(y_i)|\ge 2$ we can pick $\{x_1,x_2\}\in {[X]}^{2}$ such that $(x_i,y_i)$  is non-edge for $i=1,2$. Then $v_p-x_1\cdots y_1-w_1\cdots a_1-a_2\cdots a_3-a_4 \cdots w_2-y_2\cdots x_2-v_q$ is a witness trail because there is no edge repetition in it because         $w_1\ne w_2$ and $x_1\ne x_2$.

\noindent (2). Assume on the contrary that  $w\cdots a_1-a_2\cdots a_3-a_4 \cdots w$ is a pre-witness trail in $G[K\cup R]$ with $w\in K$. Pick $\{y_1,y_2\}\in {[Y\cap \Gamma_G(w)]}^{2}$. Pick $\{x_1,x_2\}\in {[X]}^{2}$ such that $(x_i,y_i)$ for $i=1,2$ are non-edges. Then $v_p-x_1\cdots y_1-w\cdots a_1-a_2\cdots a_3-a_4 \cdots w-y_2\cdots x_2-v_q$ is a  witness trail because there is no edge repetition in it because $y_1\ne y_2$ and $x_1\ne x_2$.
\end{proof}

\medskip\noindent Recall that $K=X\cup Z=\{v_1,\dots, v_k\}$. Consider the following partition of $R$:
\begin{enumerate}[(r1)]
\item $R_0=\{r\in R: K\subset  \Gamma_H(r)\}$,
\item $R_i=\{r\in R: K\setminus\Gamma_H(r)=\{v_i\}\}$ for $1\le i\le k$,
\item $R_\infty=\{r\in R: |K\setminus \Gamma_H(r)|\ge 2\}$.
\end{enumerate}
(See Figure \ref{figure2}.)

\begin{figure}[H]
\begin{center}
\begin{tikzpicture}[scale=0.8]
\tikzstyle{vertex}=[draw,circle,fill=black,minimum size=3,inner sep=0]
\draw (0,6) rectangle (2,9);
\node at (-.5,7.5) {X};
\node[vertex] (v1) at (1,8.5) [label=east:{$v_1$}] {};
\draw (3,6) rectangle (5,8);
\node[vertex] (vj) at (1,3.2) [label=east:{$v_j$}] {};
\node[vertex] (vk) at (1,.2) [label=east:{$v_k$}] {};
\node[vertex] (vjj) at (1,2.8)  {};
\node[vertex] (vkk) at (1,.6)  {};
\node at  (5.5, 7) {Y};
\node[vertex] (y) at (4,7.5) [label=east:{$y$}] {};
\draw[dashed] (y) -- (1.5,7.5) ;
\draw[dashed] (y) -- (1.5,7) ;
\node[vertex] (y') at (3.5,6.5) [label=east:{$y'$}] {};
\node[vertex] (vp) at (3.5,9) [label=east:{$v_p$}] {};
\node[vertex] (vq) at (3.5,8.4) [label=east:{$v_q$}] {};
\draw  (3.5,8.7) ellipse (20pt and 14pt);
\draw[dashed] (vp) -- (vq);
\node at  (4.5, 8.5) {S};
\draw (0,0) rectangle (2,5);
\node at (-.5,2.5) {Z};
\draw (3,4.6) rectangle (4,5); \node at  (4.5, 4.8) {$R_0$};
\draw (3,4) rectangle (4,4.4); \node  at  (4.5, 4.2) {$R_1$};
\draw[dashed] (3.2,4.2) .. controls (1,7).. (v1) ;
\draw (3,3) rectangle (4,3.4); \node at  (4.5, 3.2) {$R_j$};
\draw[dashed] (3.5,3.2) .. controls (2.2,3.7).. (vj) ;
\draw (3,0) rectangle (4,.4); \node at  (4.5, .2) {$R_k$};
\draw[dashed] (3.8,0.2) .. controls (2.2,-.7).. (vk) ;
\draw (y') -- (1,4.6) ;
\draw (y') .. controls (0,3).. (.5,2) ;
\draw (6,.5) rectangle (7.5,4.5); \node at  (8, 2.5) {$R_\infty$};
\coordinate (xR) at (6.5,2);
\node[vertex] at (xR) {};
\draw[dashed] (xR) -- (vjj);
\draw[dashed] (xR) -- (vkk);
\end{tikzpicture}
\end{center}
\caption{The dashed lines indicate non-edges, the solid lines indicate edges.}	\label{figure2}
\end{figure}
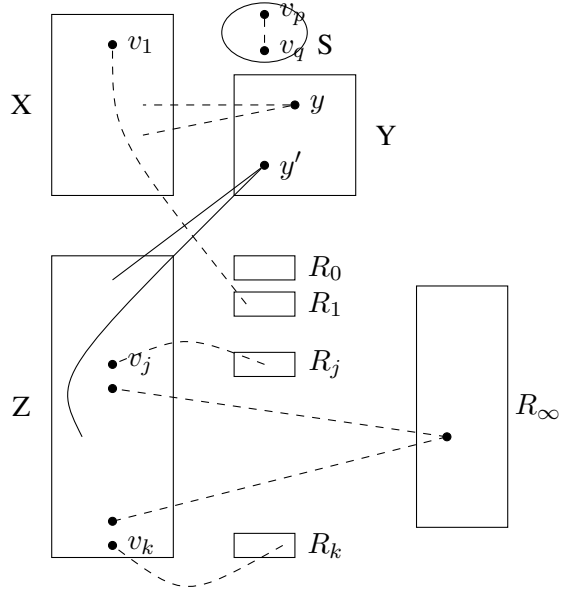

\begin{lemma}\label{lm:Rinf-free}
$R_\infty$ does not contain edges.
\end{lemma}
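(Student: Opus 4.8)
The plan is to argue by contradiction: suppose $r, r' \in R_\infty$ are distinct vertices with $(r,r')$ an edge of $G$. We want to produce an $11$-witness trail between $v_p$ and $v_q$, contradicting the standing hypothesis of Lemma~\ref{lem:key}. The natural idea is to extend the single edge $r-r'$ on both ends through $K$ into $X$, then continue from $X$ to $v_p$ (resp. $v_q$) via the non-edges defining $X$. Concretely, by definition of $R_\infty$ each of $r,r'$ misses at least two vertices of $K$, so we have a lot of freedom to pick non-edges $(r,v_i)$ and $(r', v_j)$ with $v_i, v_j \in K$ and, with a little care, $v_i \ne v_j$ (there are at least two missed vertices at each end, so we can avoid a collision). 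Now I would like a trail of the shape
\[
v_p \cdots x - \cdots v_i \cdots r - r' \cdots v_j - \cdots x' \cdots v_q,
\]
but I must insert edges between $v_i$ and $x$, and between $v_j$ and $x'$, and this is where the structure of Lemma~\ref{th:L1} and Lemma~\ref{lm:sou1-2} enters.

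The key step is getting from a vertex of $K$ to a vertex of $X$ (possibly itself) by a short alternating trail that starts and ends with an edge — ideally just the single edge, when the $K$-vertex already lies in $X$, or a length-$3$ detour $v_i - a - b - x$ through $K$ when $v_i \in Z$. Recall $K = X \cup Z$, $G[K]$ is a clique (Lemma~\ref{th:L1}(iii)), and every $z \in Z$ has a neighbor in $Y$, hence (by definition of $Y$) misses $\ge 2$ vertices of $X$. So for $v_i \in Z$ pick $x \in X$ with $(v_i, x) \notin E$; then $v_i - x'' - \cdots$ for any $x'' \in X$ adjacent to $v_i$ — but cleaner: use the clique edge $(v_i, x)$ is a non-edge, so instead take the trail $r \cdots v_i$ is a non-edge, then we need an edge out of $v_i$: since $G[K]$ is a clique, $(v_i, x)$ is an edge for the right choice. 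Let me restructure: the cleanest construction extends $r - r'$ to a pre-witness or witness trail inside $G[K\cup R]$ and then applies Lemma~\ref{lm:sou1-2} directly. Since $r, r' \in R_\infty$, for each we can pick a \emph{non-edge} into $K$; prepend and append: $w_1 \cdots r - r' \cdots w_2$ where $w_1, w_2 \in K$ come via non-edges $(w_1, r), (r', w_2)$. If $w_1 \ne w_2$ this is a pre-witness trail of length $3$ in $G[K \cup R]$ between two distinct vertices of $K$, contradicting Lemma~\ref{lm:sou1-2}(1). If we are forced to take $w_1 = w_2 = w$, then $w$ misses both $r$ and $r'$ while $w \in R_\infty$-neighbors... here I'd instead use that each of $r, r'$ misses $\ge 2$ vertices of $K$ to guarantee we can always choose $w_1 \ne w_2$ — indeed if $K \setminus \Gamma(r)$ and $K \setminus \Gamma(r')$ are both of size $\ge 2$, a transversal with $w_1 \ne w_2$ exists unless both sets equal the same single... but they have size $\ge 2$, so a distinct pair is always selectable. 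Hence case $w_1 = w_2$ never arises, and Lemma~\ref{lm:sou1-2}(1) gives the contradiction.

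The main obstacle I anticipate is the bookkeeping to ensure the trail has \emph{no repeated edge} and that all the vertices chosen ($w_1, w_2 \in K$, and $r, r' \in R_\infty$) are genuinely distinct where required — the length-$3$ trail $w_1 \cdots r - r' \cdots w_2$ uses the non-edges $(w_1,r)$, $(r',w_2)$ and the edge $(r,r')$, so as long as $w_1 \ne w_2$ (handled above via $|K \setminus \Gamma(r)|, |K \setminus \Gamma(r')| \ge 2$) and $r \ne r'$ (our assumption), there is no repetition, and this is exactly a pre-witness trail of length $3$ in $G[K\cup R]$ between distinct vertices of $K$. Then Lemma~\ref{lm:sou1-2}(1) is contradicted, completing the proof. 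This reduction is short because the hard analytic work — extending a $G[K\cup R]$-trail to a genuine $v_p$–$v_q$ witness trail through $X$ and $Y$ — has already been packaged into Lemma~\ref{lm:sou1-2}.
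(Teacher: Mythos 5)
Your final argument is correct and is essentially identical to the paper's proof: an edge $(r,r')$ in $R_\infty$, together with non-edges into two distinct vertices of $K$ (available because each endpoint misses at least two vertices of $K$), yields a length-$3$ pre-witness trail in $G[K\cup R]$ between distinct vertices of $K$, contradicting Lemma~\ref{lm:sou1-2}(1). The exploratory detour through $X$ and $Z$ in your middle paragraph is unnecessary, but the reduction you settle on is exactly the paper's.
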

\begin{proof}
Assume on the contrary that  $(x,y)$ in an edge in $R_\infty.$  Since
$|K\setminus \Gamma_G(x)|\ge 2$    and  $|K\setminus \Gamma_G(y)|\ge 2$, we can choose two distinct vertices,  $v_i\ne v_j$ from  $K$ such that $(x,v_i)$ and $(y,v_j)$ are non-edges. Then $v_i\cdots x-y\cdots v_j$ is a forbidden pre-witness trail of length $3$. Contradiction.
\end{proof}

\begin{lemma}\label{lm:RiRj_free}
There is no edge between $R_i$ and $R_j$ for $i\ne j \in (\{1,2,\dots,k \}\cup\{\infty\})$
\end{lemma}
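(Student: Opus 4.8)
The plan is to mimic the proof of Lemma~\ref{lm:Rinf-free} and Lemma~\ref{lm:sou1-2}: assume an edge $(x,y)$ with $x\in R_i$, $y\in R_j$, $i\neq j$, and exhibit a forbidden pre-witness trail in $G[K\cup R]$ of length at most five — either between two distinct vertices of $K$ (contradicting part (1) of Lemma~\ref{lm:sou1-2}), or from some $w\in K$ with $|\Gamma_G(w)\cap Y|\ge 2$ back to itself (contradicting part (2)). First I would dispose of the case where one of the indices is $\infty$, say $j=\infty$: then $x\in R_i$ misses only $v_i$ in $K$, while $y\in R_\infty$ misses at least two vertices of $K$, so I can pick $v_m\in K\setminus\Gamma_G(y)$ with $m\neq i$; then $(x,v_m)$ is an edge (since $x$ misses only $v_i$), giving the edge-deficient trail $v_i\cdots x - y\cdots v_m$ — wait, this has the wrong parity; I must start with a non-edge. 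So instead write $v_i\cdots x - y \cdots v_m$ where the first step $v_i\cdots x$ is a non-edge (by definition of $R_i$), the middle step $x-y$ is the given edge, and $y\cdots v_m$ is a non-edge (by choice of $v_m$). This is an edge-deficient trail of length $3$ between the distinct vertices $v_i,v_m\in K$, contradicting Lemma~\ref{lm:sou1-2}(1). The same works when $i=\infty$ by symmetry.

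Next I would handle the main case $i,j\in\{1,\dots,k\}$ with $i\neq j$. Here $x$ misses exactly $v_i$ in $K$ and $y$ misses exactly $v_j$ in $K$. Since $i\neq j$, the non-edge $v_i\cdots x$ and the non-edge $y\cdots v_j$ use distinct endpoints in $K$, and $(x,y)$ is an edge, so $v_i\cdots x - y\cdots v_j$ is again an edge-deficient trail of length $3$ in $G[K\cup R]$ between the two distinct vertices $v_i\neq v_j$ of $K$. This directly contradicts Lemma~\ref{lm:sou1-2}(1). I should double-check that all vertices and edges involved genuinely lie inside $G[K\cup R]$: indeed $v_i,v_j\in K$ and $x,y\in R$, so the trail stays in the induced subgraph $G[K\cup R]$, as required by the hypothesis of Lemma~\ref{lm:sou1-2}.

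I do not anticipate a serious obstacle here; the only point requiring care is the bookkeeping of parity — making sure the trail is \emph{edge-deficient}, i.e. begins and ends with a non-edge and has odd length, so that it qualifies as a pre-witness trail in the sense of Definition~\ref{df:testifier} — and making sure the two $K$-endpoints are genuinely distinct, which is exactly where the hypothesis $i\neq j$ is used. A minor subtlety is whether $x$ and $y$ could coincide with each other or with a vertex of $K$: they cannot, since $x,y\in R$ and $R$ is disjoint from $K$, and $(x,y)$ being an edge forces $x\neq y$. Thus the proof reduces to the single length-$3$ trail $v_i\cdots x - y\cdots v_j$ in every case, and the lemma follows immediately from Lemma~\ref{lm:sou1-2}(1).
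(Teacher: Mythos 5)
Your proposal is correct and follows essentially the same route as the paper: assume an edge between $R_i$ and $R_j$, take the non-edge to $v_i$ (WLOG $i\neq\infty$), the given edge, and a non-edge to some $v_k\in K$ with $k\neq i$ (namely $v_j$ if $j<\infty$, or any missed vertex other than $v_i$ if $j=\infty$), producing a length-$3$ pre-witness trail in $G[K\cup R]$ between two distinct vertices of $K$, contradicting Lemma~\ref{lm:sou1-2}(1). The parity bookkeeping and the distinctness of the two $K$-endpoints are handled exactly as in the paper's argument.
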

\begin{proof}
Assume on the contrary that $a\in R_i$, $b\in R_j$ and $(a,b)$ is an edge.
We can assume that $i\ne \infty$.
Then $(a,v_i)$ is a non-edge by the definition of $R_i$.

If $j=\infty $ pick $k\ne i$ such that $(b,v_k)$ is a non-edge. If, however, $j<\infty$, then let $k=j$. In both cases $k\ne i$ and $(v_k,b)$ is a non-edge. Hence, $v_i\cdots a-b\cdots v_j$ is a forbidden pre-witness trail of  length $3$.
\end{proof}

\begin{lemma}\label{lm:Ysingleton}
Assume  $(a,b)$ is an edge in $R_i$   for some $1\le i\le k$. Then $|\Gamma_G(v_i)\cap Y|=1$.
\end{lemma}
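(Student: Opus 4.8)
\textbf{Proof plan for Lemma~\ref{lm:Ysingleton}.}

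The plan is to argue by contradiction: suppose $(a,b)$ is an edge with $a,b\in R_i$, and suppose $|\Gamma_G(v_i)\cap Y|\ge 2$ (it cannot be $0$, since $v_i\in K=X\cup Z$ and every vertex of $K$ has a neighbor in $Y$ — vertices of $X$ are adjacent to... actually we need to be a bit careful here; a vertex of $Z$ has a neighbor in $Y$ by definition of $Z$, and a vertex of $X$ may need a separate short-trail argument, but in any case $\ge 1$ always holds, so the only alternative to the claim is $\ge 2$). Having assumed $|\Gamma_G(v_i)\cap Y|\ge 2$, I will build a forbidden pre-witness trail using the edge $(a,b)$ together with $v_i$ and two of its $Y$-neighbors, and then invoke Lemma~\ref{lm:sou1-2}(2) (or (1)) to get a contradiction.

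The key construction: since $a,b\in R_i$, both $(a,v_i)$ and $(b,v_i)$ are non-edges (that is precisely what membership in $R_i$ means). Since $(a,b)$ is an edge, we have the edge-deficient trail
\[
v_i\cdots a - b\cdots v_i
\]
of length $3$ lying entirely in $G[K\cup R]$ — its internal vertices $a,b$ are in $R$, and its endpoints are $v_i\in K$. This is a pre-witness trail from $w=v_i$ to itself, and since we assumed $|\Gamma_G(v_i)\cap Y|\ge 2$, Lemma~\ref{lm:sou1-2}(2) says no such trail can exist. Contradiction. Hence $|\Gamma_G(v_i)\cap Y|\le 1$; combined with $|\Gamma_G(v_i)\cap Y|\ge 1$ we get equality.

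The main point to nail down — and the only real obstacle — is the lower bound $|\Gamma_G(v_i)\cap Y|\ge 1$, which is needed to rule out the possibility that the quantity is $0$. For $v_i\in Z$ this is immediate from the definition $Z=\Gamma_G(Y)\setminus X$. For $v_i\in X$ it requires a short argument: if $v_i\in X$ had no neighbor in $Y$, then in particular $v_i\in\Gamma_G(y)$ fails for all $y\in Y$; one then has to check that this is either impossible (because $X=\Gamma_G(v_p)$ and the structure of $Y$ forces some $y\in Y$ with $v_i\notin\Gamma_G(y)$ to be nonempty only when... ) or else that it does not actually occur given that $R_i$ contains an edge. In fact, the clean way is: if $v_i\in X$ and $\Gamma_G(v_i)\cap Y=\emptyset$, then $v_i\in X\setminus\Gamma_G(y)$ for every $y\in Y$; pick any $y\in Y$ (if $Y=\emptyset$ the statement is vacuous/irrelevant since then $Z=\emptyset$ and the analysis degenerates) and a second element $x'\in X\setminus\Gamma_G(y)$ (which exists since $|X\setminus\Gamma_G(y)|\ge 2$ by definition of $Y$), and also a neighbor $y\in Y\cap\Gamma_G(w)$ for one of the $K$-vertices $a$ or $b$ is adjacent to — wait, $a,b\in R\subseteq V\setminus Y$. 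Let me instead simply observe: the trail $v_p - v_i \cdots a - b \cdots v_i - ?$ cannot be closed up without a $Y$-neighbor of $v_i$, so I should just handle the $v_i\in X$ case by noting that then $(v_p,v_i)$ and $(v_q,v_i)$ are edges, giving the witness trail $v_p - v_i\cdots a-b\cdots v_i$... but that is not alternating correctly at the start ($v_p$–$v_i$ is an edge, $v_i$–$a$ is a non-edge, $a$–$b$ is an edge, $b$–$v_i$ is a non-edge) — length $4$, even, wrong parity. So the honest resolution is that when $v_i\in X$ it automatically has $\ge 2$ neighbors in $Y$-complement sense; the cleanest is probably just to cite that $v_i\in K$ always has at least one $Y$-neighbor via the same pre-witness machinery, or to note the statement of the lemma is only applied later when $R_i\ne\emptyset$ forces $v_i\in Z$. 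I will present the argument assuming the background fact $|\Gamma_G(v_i)\cap Y|\ge 1$ for $v_i\in K$ (which follows from Lemma~\ref{th:L1} plus the definitions), so that the contradiction with Lemma~\ref{lm:sou1-2}(2) is the whole content.
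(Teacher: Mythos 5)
Your core argument is exactly the paper's proof: since $a,b\in R_i$, both $(a,v_i)$ and $(b,v_i)$ are non-edges, so $v_i\cdots a-b\cdots v_i$ is a length-$3$ pre-witness trail from $v_i$ to itself inside $G[K\cup R]$, and Lemma~\ref{lm:sou1-2}(2) then forces $|\Gamma_G(v_i)\cap Y|\le 1$. That part of your proposal is correct and is all the paper does.

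Where you diverge is the lower bound $|\Gamma_G(v_i)\cap Y|\ge 1$, on which you spend most of your effort and which you ultimately do not establish: your fallback, that it ``follows from Lemma~\ref{th:L1} plus the definitions,'' is not justified, since the definitions only guarantee a $Y$-neighbor when $v_i\in Z$, and nothing in Lemma~\ref{th:L1} rules out $v_i\in X$ with $\Gamma_G(v_i)\cap Y=\emptyset$. So as a proof of the equality stated in the lemma, your write-up has a gap at exactly the point you flagged. It is worth knowing, however, that the paper's own one-line proof also only derives the inequality $|\Gamma_G(v_i)\cap Y|\le 1$, and that this inequality is the only thing ever used later (in the proof of Lemma~\ref{th:Sou-5}); the ``$=1$'' in the statement is not actually supported by the paper's argument either. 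So your instinct that the upper bound via Lemma~\ref{lm:sou1-2}(2) ``is the whole content'' is right; the clean fix is simply to state and prove the lemma as $|\Gamma_G(v_i)\cap Y|\le 1$ rather than to try to force the lower bound.
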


\begin{proof}
By Lemma \ref{lm:sou1-2}(2), we have  $|\Gamma_G(v_i)\cap Y|\le 1$ because $x_i\cdots a-b\cdots x_i$ is a pre-witness trail.
\end{proof}

\begin{lemma}\label{lm:Ri-empty}
If there exists an edge $(a,b)$ in $R_i$  for some  $1\le i\le k$ then  $R_j=\emptyset$  provided  $1\le j\le k$ and $ j\ne i$.
\end{lemma}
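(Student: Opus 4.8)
The plan is a proof by contradiction that produces a pre-witness trail forbidden by Lemma~\ref{lm:sou1-2}(1). So suppose there is an edge $(a,b)$ with $a,b\in R_i$ and, for some $j$ with $1\le j\le k$ and $j\ne i$, a vertex $c\in R_j$. The first step is to read off from the definitions of the classes $R_i$ and $R_j$ the adjacencies we shall use: since $a,b\in R_i$, both $(v_i,a)$ and $(v_i,b)$ are non-edges; since $c\in R_j$ and $v_i\in K$ with $i\ne j$, the pair $(c,v_i)$ is an edge while $(c,v_j)$ is a non-edge. We also note $a\ne b$, $v_i\ne v_j$, $a,b,c\notin K$, and $c\notin\{a,b\}$, the last because a vertex cannot have both $\{v_i\}$ and $\{v_j\}$ as its set of non-neighbours in $K$.

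The second step is simply to exhibit the trail
\[
v_j\cdots c - v_i\cdots a - b\cdots v_i
\]
in $G[K\cup R]$, i.e.\ the vertex sequence $v_j,c,v_i,a,b,v_i$ whose consecutive pairs are, in order, a non-edge, an edge, a non-edge, an edge and a non-edge. By the adjacencies above this is an alternating trail of length $5$ beginning with a non-edge; its five edges $\{v_j,c\},\{c,v_i\},\{v_i,a\},\{a,b\},\{b,v_i\}$ are pairwise distinct (the first contains $v_j$, which appears on no other; the second contains $c$, which appears on no other among the last four; and no two of $\{v_i,a\},\{a,b\},\{b,v_i\}$ coincide since $a,b,v_i$ are pairwise distinct). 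Hence it is an edge-deficient trail of length at most five, that is, a pre-witness trail, all of whose vertices lie in $K\cup R$, and its endpoints $v_j$ and $v_i$ are two distinct vertices of $K$. This contradicts Lemma~\ref{lm:sou1-2}(1) and proves the lemma.

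The only real idea here — and, I expect, the one mild pitfall — is the shape of the trail: a length-$3$ pre-witness trail built from the edge $(a,b)$ alone cannot work, because the unique vertex of $K$ missing from the neighbourhood of $a$ (and of $b$) is $v_i$ itself, which would force both $K$-endpoints to be $v_i$. The vertex $c\in R_j$ is therefore used precisely as a detour that enters $v_i$ a second time through the edge $c-v_i$, after which $(a,b)$ can be traversed and the trail closed off at $v_j$. No further input is needed; in particular, Lemma~\ref{lm:RiRj_free} is not used.
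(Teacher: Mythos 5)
Your proof is correct and follows essentially the same route as the paper: the paper exhibits the pre-witness trail $v_i\cdots a-b\cdots v_i-c\cdots v_j$ of length $5$, which is your trail read in reverse (with the symmetric roles of $a$ and $b$ swapped), and likewise derives a contradiction with Lemma~\ref{lm:sou1-2}(1). Your extra checks of edge-distinctness and of why the detour through $c$ is necessary are sound and consistent with the paper's argument.
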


\begin{proof}
Assume on the contrary that there exists $c\in R_j$ for some  $j\ne i, 1\le j \le k.$ Observe that $(x_i,c)$ is an edge because $c\in R_j$ and $j\ne i$. Hence,  we have a pre-witness trail between $x_i$ and $x_j$ of length $5$: $x_i\cdots a-b\cdots x_i-c\cdots x_j$.  (see Fig. \ref{fig}). Contradiction.
\end{proof}

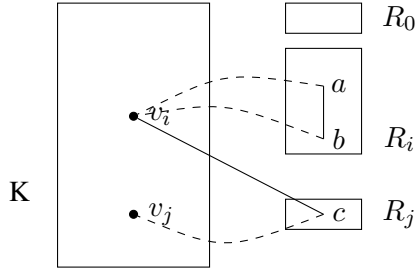
\begin{figure}[h]
\begin{center}
\begin{tikzpicture}[scale=1]
\tikzstyle{vertex}=[draw,circle,fill=black,minimum size=3,inner sep=0]
\node[vertex] (vi) at (1,3.5) [label=east:{$v_i$}] {};
\node[vertex] (vj) at (1,2.2) [label=east:{$v_j$}] {};
\draw (0,1.5) rectangle (2,5);
\node at (-.5,2.5) {K};
\draw (3,4.6) rectangle (4,5); \node at  (4.5, 4.8) {$R_0$};
\draw (3,3) rectangle (4,4.4); \node at  (4.5, 3.2) {$R_i$};
\draw (3.5,3.2) -- (3.5,3.9);
\draw[dashed] (3.5,3.2) .. controls (2.2,3.7).. (vi) ;
\draw[dashed] (3.5,3.9) .. controls (2.2,4.1).. (vi) ;
\draw (3,2) rectangle (4,2.4); \node at  (4.5, 2.2) {$R_j$};
\draw[dashed] (3.5,2.2) .. controls (2.2,1.7).. (vj) ;
\draw (vi) -- (3.5,2.2);
\node at (3.7,3.9) {$a$};
\node at (3.7,3.2) {$b$};
\node at (3.7,2.2) {$c$};
\end{tikzpicture}
\end{center}
\caption{We provide the complete witness trail.}\label{fig}
\end{figure}
\medskip\noindent
After this preparation we can see that
we can distinguish two cases concerning the structure of $G$.

\begin{description}
\item [Case I: ]  $R_N\coloneqq\bigcup\{R_i: 1\le i \le k\}\cup R_\infty$ is an  independent  subset,
\item [Case II:] There exists  $1\le i \le k$ such that  $R_i$ contains edges, and so $R=R_0\cup R_i\cup R_{\infty}$.
\end{description}

\medskip\noindent
Our goal in the remaining part of the proof is to transform our degree sequence $D$ into a new degree sequence $D''\in \dds{n}{\Sigma}{c_1}{c_2}$ and our graph $G$ into a realization $G' \in \G(D''+1_{+q}^{+p})$ which is a hostile configuration.

\begin{definition}\label{df:hingeflip}
Assume that $G$ is a graph with degree sequence $D$, and let $x,y,z$ be distinct vertices such that $(x,y)$ is an edge and $(x,z)$ is a non-edge. The \textbf{hinge-flip} operation $\hflip{x}{y}{z}$ deletes the edge $(x,y)$ from $H$ and adds the edge $(x,z)$ to obtain a new graph $H'$. The  degree sequence of $H'$ is $D'=D+1^{-i}_{+j}.$ The operation was already introduced in \cite{JS89} as \textbf{Type 0 transition}.
\end{definition}
\noindent The hinge-flip operation clearly changes the degree sequence of $H$, since $\deg_{H'}(y)=\deg_H(y)-1$ and $\deg_{H'}(z)=\deg_H(z)+1.$ The other degrees are not  changed.

%%%%%%%%%%
\subsection{Proof of Lemma~\ref{lem:key}: Case I. There is no edge in $R_N$.}\label{sec:caseI}
%%%%%%%%%%%

We generate a sequence of graphs $G_0, G_1,\ldots, G_m$ and their degree sequences   $D_0, D_1, \ldots, D_m$ with consecutive hinge-flip operations as follows.

Let $G_0=G$, and so $D_0=D^\diamond=D+1^{+p}_{+q}$. Assume that $G_\ell$ is already produced. Pick $x_\ell \ne z_\ell \in R_0$ and $y_\ell \in R_N$ such that $(x_\ell, y_\ell)$ is an edge and $(x_\ell, z_\ell)$ is a non-edge in $G_\ell.$ Apply the corresponding hinge-flip operation
$\hflip{x_\ell}{y_\ell}{z_\ell}$ for $G_\ell$ to obtain $G_{\ell+1}$. (We will call this operation a \textbf{downward twist}.) We stop the process if there are no suitable $x_\ell$, $y_\ell$ and $z_\ell$.

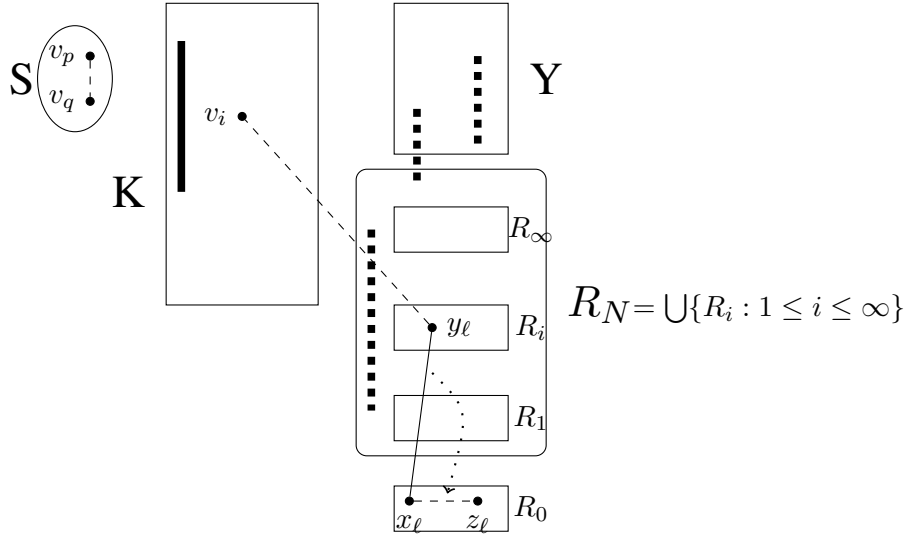
\begin{figure}[h]
\begin{center}
\begin{tikzpicture}[scale=1]
\tikzstyle{vertex}=[draw,circle,fill=black,minimum size=3,inner sep=0]
\draw  (-.2,5) ellipse (14pt and 20pt);
\node[vertex] (vp) at (0,5.3) [label=west:{$v_p$}] {};
\node[vertex] (vq) at (0,4.7) [label=west:{$v_q$}] {};
\node at  (-.9, 5) {\LARGE S};
\draw[dashed] (vp) -- (vq);
\draw (1,2) rectangle (3,6);
\node at (.5,3.5) {\LARGE K};
%\draw (1.5,4) rectangle (2.5,5.3);
%\node at (2,5.5) {X};
\node[vertex] (vi) at (2,4.5) [label=west:{$v_i$}] {};
\draw (4,4) rectangle (5.5,6);
\node at (6,5) {\LARGE Y};
\draw[line width=1mm] (1.2,5.5) -- (1.2, 3.5);
\draw[line width=1mm, dashed] (5.1,5.3) -- (5.1,4.1);
\draw[rounded corners] (3.5,0) rectangle (6,3.8);
\draw[line width=1mm, dashed] (4.3,4.6) -- (4.3,3.6);
\draw (4,2.7) rectangle (5.5,3.3);
\node at (5.8,3) {$R_\infty$};
\draw (4,.2) rectangle (5.5,0.8);
\node at (5.8,.5) {$R_1$};
\node at (8.5,2) {\scalebox{1.5}{$R_N$}$=\bigcup\{R_i : 1\le i \le \infty\}$};
\draw[line width=1mm, dashed] (3.7,3) -- (3.7,.6);
\draw (4,1.4) rectangle (5.5,2);
\node at (5.8,1.7) {$R_i$};
\draw[dashed] ;
\draw (4,-1) rectangle (5.5,-.4);
\node at (5.8,-.7) {$R_0$};
\node[vertex] (b) at (4.5,1.7) [label=east:{$y_\ell$}] {};
\node[vertex] (a) at (4.2,-.6) [label=south:{$x_\ell$}] {};
\node[vertex] (c) at (5.1,-.6) [label=south:{$z_\ell$}] {};
\draw[dashed] (a) -- (c);
\draw (a) -- (b);
\draw[dashed] (b) -- (vi);
\draw[loosely dotted,thick,->] (4.5,1.1) .. controls (5,.6) .. (4.7,-.5);
\end{tikzpicture}
\end{center}
\caption{In $R_N$ there is no edge. The vertices $x_j, z_j\in R_0$ and $y_j\in R_N$. The curved, loosely dotted arrow indicates a hinge-flip operation. As earlier: the thick dashed lines indicate no edge at all, the solid thick lines indicate all edges.}	
\end{figure}

\noindent

\begin{lemma}\label{lm:hinge1}
$D_m+1^{-p}_{-q}\in \dds{n}{\Sigma}{c_1}{c_2}$.
\end{lemma}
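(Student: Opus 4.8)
The goal is to show that after the downward-twist process terminates, the degree sequence $D_m$ of $G_m$ satisfies $D_m + 1^{-p}_{-q} \in \dds{n}{\Sigma}{c_1}{c_2}$. The plan is to track separately the three requirements defining this region: the correct length $n$, the correct even sum $\Sigma$, and the degree bounds $c_1 \ge c_i \ge c_2$. The length is unchanged by hinge-flips, so only the sum and the degree bounds need attention. Each hinge-flip operation $\hflip{x_\ell}{y_\ell}{z_\ell}$ decreases $\deg(y_\ell)$ by one and increases $\deg(z_\ell)$ by one, leaving all other degrees (in particular $\deg(v_p)$ and $\deg(v_q)$) unchanged; hence $D_m$ has the same sum as $D_0 = D^\diamond = D + 1^{+p}_{+q}$, and so $D_m + 1^{-p}_{-q}$ has the same sum $\Sigma$ as $D$, which is even. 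This disposes of the length and the sum.

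\textbf{The degree bounds.} This is the substantive part. I would argue that throughout the process: (1) the vertices $z_\ell \in R_0$ only gain edges, so their degree can only increase — I need this never pushes them above $c_1$; (2) the vertices $y_\ell \in R_N$ only lose edges, so their degree can only decrease — I need this never pushes them below $c_2$; (3) no other vertex's degree changes, so in particular $v_p, v_q$ keep degrees $d_p+1, d_q+1$ which lie in the right range after subtracting $1$. For the upper bound on $R_0$ vertices: a vertex in $R_0$ is, by definition (r1), adjacent to all of $K$, and in $G$ it has degree at most $c_1$; a twist adds an edge from $z_\ell$ to some $y_\ell \in R_N \subseteq R$. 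The key structural fact I would invoke is that the process stops precisely when no admissible triple exists, and the natural claim is that the bound $c_1$ is never exceeded because if $\deg_{G_\ell}(z_\ell) = c_1$ already then $z_\ell$ is saturated and cannot be chosen as a target — wait, this needs care: the selection rule only requires $(x_\ell, z_\ell)$ to be a non-edge, not that $z_\ell$ is unsaturated. So the real argument must be that $D^\diamond \in \dds{n}{\Sigma}{c_1}{c_2}$ modified only by raising $v_p, v_q$, and then we show the downward twists cannot raise an $R_0$-degree above $c_1$: here I expect to use that the target $z_\ell$ ranges over $R_0$ and that there is a conservation/monotonicity bookkeeping — the total number of edges between $R_0$ and $R_N$ is what is being redistributed — together with the degree bound $c_1$ on the source side. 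Symmetrically for the lower bound $c_2$ on $R_N$.

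\textbf{The main obstacle.} I anticipate the crux is controlling the $R_0$ degrees from above and the $R_N$ degrees from below \emph{simultaneously}: a downward twist is only harmful if it would push some $z_\ell$ over $c_1$ or some $y_\ell$ under $c_2$, and the argument must show that whenever such a harmful move is the only one available, in fact \emph{no} move is available, i.e. the process has already stopped. Concretely, I would prove the invariant that at every stage $\ell$, every vertex of $R_0$ has degree $\le c_1$ and every vertex of $R_N$ has degree $\ge c_2$, by induction on $\ell$, using the stopping rule together with the structural lemmas (Lemmas~\ref{lm:Rinf-free}--\ref{lm:Ri-empty} and Lemma~\ref{th:L1}\eqref{en:Xr}) that pin down which edges $R_0$ and $R_N$ vertices must or must not have. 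In particular Lemma~\ref{th:L1}\eqref{en:Xr} gives $\deg_G(r) \ge |X| - 1 \ge c_2 - 1$ for $r \in R$, and I would need to upgrade this to $\ge c_2$ for vertices of $R_N$ that still carry edges, or otherwise argue that the twist process never selects a $y_\ell$ whose degree is already $c_2$ (since then it would be the unique element of $K \setminus \Gamma$ structure forcing... ) — pinning down this last point is where the delicate case analysis will live, and I expect it to be the part of the proof that must be written out in full detail rather than sketched.
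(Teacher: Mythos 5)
There is a genuine gap: you correctly isolate the two degree bounds as the substantive content, but you do not supply the argument for either, and the strategies you sketch would not work. For the upper bound on $R_0$-degrees, the stopping rule offers no protection (it only requires $(x_\ell,z_\ell)$ to be a non-edge, as you yourself notice), and there is no conservation/bookkeeping argument: the process genuinely can raise the degree of a vertex of $R_0$. The paper's actual mechanism is the witness-trail machinery, which your outline never brings to bear on this bound. One argues by contradiction: if $\deg_{G_m}(z_\ell)>c_1$ for some flip target $z_\ell$, pick $v_i,v_j\in K$ with $i\ne j$ such that $(v_i,y_\ell)$ is an edge and $(v_j,y_\ell)$ a non-edge (possible since $y_\ell\in R_N$, and these adjacencies are the same in $G$ as in $G_\ell$ because all flips act inside $R$); since $G[R_0,\{v_j\}]$ is complete, the inequality
\begin{displaymath}
\deg_{G\setminus R_0}(z_\ell)+|R_0|\ \ge\ \deg_{G_m}(z_\ell)\ >\ c_1\ \ge\ \deg_G(v_j)=\deg_{G\setminus R_0}(v_j)+|R_0|
\end{displaymath}
produces a vertex $d$ adjacent to $z_\ell$ but not to $v_j$ in $G$; the structural facts force $d\in R_N$, and then $v_j\cdots d-z_\ell\cdots x_\ell-y_\ell\cdots v_i$ is a pre-witness trail of length $5$ between two distinct vertices of $K$, contradicting Lemma~\ref{lm:sou1-2}(1). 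Without invoking the no-$11$-witness-trail hypothesis through Lemma~\ref{lm:sou1-2}, the bound simply cannot be established, so the ``invariant by induction on $\ell$ using the stopping rule'' you propose has no proof behind it.

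The lower bound is also mishandled, though here the fix is easy and you are overcomplicating it. Since $G$ realizes $D+1^{+p}_{+q}$ and $d_p\ge c_2$, we have $|X|=\deg_G(v_p)=d_p+1\ge c_2+1$, so $|X|-1\ge c_2$ (your estimate $\ge c_2-1$ is off by one because you forgot the perturbation). Since every hinge-flip deletes only edges inside $R$, every $v\in R_0\cup R_N$ keeps all its neighbours in $X$, and by Lemma~\ref{th:L1}\eqref{en:Xr} it has at least $|X|-1\ge c_2$ of them; hence $\deg_{G_m}(v)\ge c_2$ unconditionally. In particular there is no need (and it is not true) that the process avoids selecting a $y_\ell$ of current degree $c_2$ adjacent only to $X$ — a vertex of $R_N$ may well be stripped down to exactly its $X$-neighbours, and that is fine. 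The parts of your write-up that are correct (length, parity, and the fact that only degrees in $R_0\cup R_N$ change, with $R_N$-degrees only decreasing) match the paper, but the two genuinely delicate points are left as open questions in your text, with the proposed routes pointing away from the arguments that actually close them.
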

\begin{proof}
The hinge-flip operations  keep the sum of the degrees, so  $\Sigma(D_{m}) =  \Sigma(D_{0}) =\Sigma(D)+2$. Since we added and removed edges only inside $R_N\cup R_0$, we  have to show that $$c_1 \ge \deg_{G_m}(v) \ge c_2$$ whenever $v\in R_N\cup  R_0.$ Since $\Gamma_{G_m}(v)\cap X=\Gamma_{G}(v)\cap X$ and $|X\setminus \Gamma_{G}(v)|\le 1$ by \eqref{en:Xr}, we have $\deg_{G_m}(v)\ge |X|-1\ge c_2$.

For $v\in R_N$, the applied hinge-flip operations can not increase the degree in $v$. So $ \deg_{G_m}(v)\le \deg_G(v)\le c_1$.
 So, to complete the proof of the Lemma, we should show $\deg_{G_{\ell+1}}(v)\le c_1$ for each $v\in R_0$. Assume on the contrary that $\deg_{G_{m}}(z)>c_1$ for some $z\in R_0$. Since $\deg_{G_0}(z)\le c_1$, we increased the degree of $z$  by some   hinge-flip operation $\hflip{x_\ell}{y_\ell}{z_\ell}$, and so $z=z_\ell.$

Since $y_\ell\in R_N$, we can pick  $1\le i\ne j  \le k$  such that $v_iy_\ell$ is an edge and $v_jy_\ell$ is a non-edge in
$G_\ell$.   Moreover, the hinge-flip operations
$\hflip{x_s}{y_s}{z_s}$
modify the edges only inside $R$, so
\begin{center}
    $v_iy_\ell$ is an edge and $v_jy_\ell$ is a non-edge in $G$
\end{center}
as well. We know that the bipartite graph $G_\ell[R_0,\{v_j\}]$ is complete. Hence, we have the following inequality chain:
\begin{displaymath}
\deg_{G\setminus R_0}(z_\ell)+|R_0|\ge \deg_{G_m}(z_\ell)>c_1\ge \deg_G(v_j)= \deg_{G\setminus R_0}(v_j)+|R_0|.
\end{displaymath}
Thus,  there exists a vertex $d$ such that $(z_\ell,d)$ is an edge, and
$(v_j,d)$ is a non-edge in $G$. Then $d\notin Y\cup S$ because $(z_\ell,d)$ is an edge and $z_\ell \in R_0$. Moreover,  $d\notin K\cup R_0$ because $v_jd$ is a non-edge. Thus, the only possibility is that  $d\in R_N$.

Hence, $v_j\cdots d-z_\ell\cdots x_\ell- y_\ell\cdots  v_i$ is a forbidden pre-witness trail
of length $5$
which does not
exist by   Lemma \ref{lm:sou1-2}(1).
\end{proof}
\noindent Define
\begin{displaymath}
    R_0^0:=\left\{r\in R_0 : \Gamma_m(r) \cap R_N \ \text{is not empty}\right\}\quad \text{and}\quad R_0^1=R_0\setminus R_0^0.
\end{displaymath}
\begin{lemma}\label{lm:R00}
$R_0 \subseteq \Gamma_m(r)$ for each $r\in R^0_0$.
\end{lemma}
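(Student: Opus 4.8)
\textbf{Plan for the proof of Lemma~\ref{lm:R00}.}

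The plan is to argue by contradiction, using the no-pre-witness-trail property (Lemma~\ref{lm:sou1-2}(1)) in $G_m$ rather than in $G$. Note first that since every hinge-flip in the process only moves edges around inside $R_N\cup R_0$, and $R_0,R_N\subseteq R$, the conclusions of Lemma~\ref{lm:sou1-2} — in particular, that there is no pre-witness trail in $G[K\cup R]$ between two distinct vertices of $K$ — apply verbatim to $G_m[K\cup R]$ as well, because $K$ and its adjacencies to $Y$ are untouched. (This is the same transfer principle used in the proof of Lemma~\ref{lm:hinge1}.) So it suffices to produce, from a violation of the claim, a pre-witness trail of length $5$ in $G_m[K\cup R]$ between two distinct vertices of $K$.

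So suppose $r\in R_0^0$ but $R_0\not\subseteq\Gamma_m(r)$; pick $r'\in R_0$ with $(r,r')$ a non-edge in $G_m$, and pick $y\in R_N$ with $(r,y)$ an edge in $G_m$ (possible since $r\in R_0^0$). Since $y\in R_N$, either $y\in R_i$ for some $1\le i\le k$ or $y\in R_\infty$; in either case there are two distinct vertices $v_i\neq v_j$ of $K$ with $(v_i,y)$ an edge and $(v_j,y)$ a non-edge in $G_m$ (if $y\in R_i$ take the missing vertex as $v_j$ and any other as $v_i$; if $y\in R_\infty$ pick any two missing vertices). Now $r,r'\in R_0$ means $K\subseteq\Gamma_m(r)\cap\Gamma_m(r')$ — wait, we must be careful: the definition of $R_0$ refers to $\Gamma_H(r)$ for the original graph, so I should instead observe that $\Gamma_{G_m}(v)\cap K=\Gamma_G(v)\cap K$ for all $v\in R$ (again because hinge-flips act only inside $R$), hence $K\subseteq\Gamma_m(r)$ and $K\subseteq\Gamma_m(r')$ still hold. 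In particular $(v_j,r')$ and $(v_i,r')$ are edges of $G_m$. Then
\[
v_j\cdots y - r \cdots r' - v_i
\]
reading: non-edge $(v_j,y)$, edge $(y,r)$, non-edge $(r,r')$, edge $(r',v_i)$, is an edge-deficient trail of length $4$ — not quite odd length. Let me instead use length $5$: start at $v_i$, go $v_i\cdots y$? No, $(v_i,y)$ is an edge. The correct assembly: we want an edge-deficient (odd length, first element a non-edge, more non-edges than edges) trail between two distinct vertices of $K$. Take
\[
v_j \cdots y - v_i - r' \cdots r - y' ?
\]
this is getting tangled; the clean choice is the length-$3$ trail $v_j\cdots y - r\cdots v_?$ if $(r,\text{some }v)$ were a non-edge, but it is not since $r\in R_0$. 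So the genuine obstruction-shaped object has length $5$: combine the non-edge $(v_j,y)$, edge $(y,r)$, non-edge $(r,r')$, edge $(r',v_i)$ — that is length $4$, so pad by noting $(v_i,y)$ is an edge and $(y,v_j)$ closes things: actually the right five-term trail is $v_j\cdots y - r\cdots r' - v_i\cdots y' $ with $y'\in R_N$, $(v_i,y')$ non-edge, which needs $v_i$ to miss something in $R_N$. I will sort out the exact incidences in the writeup; morally, the edge $(r,y)$ together with the non-edge $(r,r')$ and the full $K$-to-$R_0$ bipartite completeness let me route an edge-deficient trail of length $5$ between $v_j$ and $v_i$ inside $G_m[K\cup R]$.

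The main obstacle, then, is purely bookkeeping: getting the \emph{parity} and the \emph{first-element-is-a-non-edge} condition of the pre-witness trail right, i.e. choosing which of $v_i,v_j$ to start from and in which direction to traverse, so that the trail $v_?\cdots\,\cdots\,v_?$ genuinely has odd length $\le 5$, strictly more non-edges than edges, and no repeated vertex or edge (the repetition-freeness is automatic here since $r\ne r'$, $v_i\ne v_j$, and $y$ appears once). Once the trail is exhibited, Lemma~\ref{lm:sou1-2}(1) applied to $G_m$ gives the contradiction, proving $R_0\subseteq\Gamma_m(r)$ for every $r\in R_0^0$.
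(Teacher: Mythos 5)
There is a genuine gap, and in fact the whole approach is aimed at the wrong mechanism. In the paper this lemma is a one-line consequence of the \emph{termination condition} of the downward-twist process: $G_m$ is by definition the graph at which no admissible triple $(x_\ell,y_\ell,z_\ell)$ with $x_\ell\ne z_\ell\in R_0$, $y_\ell\in R_N$, $(x_\ell,y_\ell)$ an edge and $(x_\ell,z_\ell)$ a non-edge exists. If $r\in R_0^0$ had a non-neighbor $z\in R_0$ in $G_m$, then, choosing $y\in R_N$ with $(r,y)\in E(G_m)$ (which exists since $r\in R_0^0$), the triple $(r,y,z)$ would be exactly such an admissible further twist $\hflip{r}{y}{z}$ --- contradicting the maximality of the sequence $G_0,\dots,G_m$. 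No witness-trail argument is needed or, indeed, available: the hypothesized configuration (a vertex of $R_0$ with a neighbor in $R_N$ and a non-neighbor in $R_0$) is perfectly consistent with the absence of $11$-witness trails in $G$ --- it can occur in $G=G_0$ itself --- so one should not expect to refute it via Lemma~\ref{lm:sou1-2}.

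Concretely, your construction never closes: the trail you can actually exhibit, non-edge $(v_j,y)$, edge $(y,r)$, non-edge $(r,z)$, edge $(z,v_i)$, has even length $4$, so it is not edge-deficient, and there is no guaranteed fifth step (you would need an edge from $z$ into $R_N$ followed by a non-edge back into $K\setminus\{v_j\}$, which need not exist, e.g.\ when $z$ has no neighbor in $R_N$). The passage you defer (``I will sort out the exact incidences in the writeup'') is therefore not bookkeeping but the missing substance. Separately, your ``transfer principle'' is stated too strongly: Lemma~\ref{lm:sou1-2} is proved for $G$, and the twists do change edges inside $R$, so its conclusion does not carry over verbatim to $G_m[K\cup R]$; the paper's proof of Lemma~\ref{lm:hinge1} instead pulls the \emph{specific} incidences back to $G$, using that within $R_0$ edges are only added and between $R_0$ and $R_N$ edges are only deleted, and then finds the forbidden pre-witness trail in $G$ itself. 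The correct proof of the present lemma bypasses all of this by invoking the stopping rule of the process.
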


\begin{proof}
Assume on the contrary that $z\in R_0$ and $(z,r)$ is a non-edge in $G_m$.
Since $r\in R^0_0$, there is $y\in R_N$ such that $(r,y)$ is an edge in $G_m$. So we have a  further  downward twist operation  $\hflip{r}{y}{z}$ in $G_m$. Contradiction.
\end{proof}

\begin{lemma}\label{lm:dream-I}
    $G_m$ has  a  hostile configuration.
\end{lemma}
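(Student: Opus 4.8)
\textbf{Proof plan for Lemma~\ref{lm:dream-I}.}

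The plan is to exhibit the partition $V(G_m) = S \uplus K' \uplus Y' \uplus R'$ witnessing a hostile configuration, and then verify the five defining properties (a)--(e) of Definition~\ref{df:hostile}. The natural candidate, guided by the warm-up proof in Section~\ref{sec:initial} and by the structure revealed in Section~\ref{sec:refined}, is
\begin{displaymath}
S = \{v_p, v_q\}, \qquad K' = K \cup R_0^0, \qquad Y' = Y \cup R_N, \qquad R' = R_0^1.
\end{displaymath}
That is, we fold the ``saturated'' part $R_0^0$ of $R_0$ (whose vertices are adjacent to all of $R_0$ by Lemma~\ref{lm:R00}, and to all of $K$ since they lie in $R_0$) into the clique side, and the ``isolated-from-$R_N$'' part $R_0^1$ becomes the set $R'$ that is complete to $K'$ but empty to $Y'$.

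First I would record the edge/non-edge facts already in hand and translate them into statements about $G_m$ rather than $G$: the hinge-flips only ever moved edges within $R_N \cup R_0$, so all adjacencies between $K$, $Y$, $S$ and $R$ are unchanged, and within $R$ we have Lemma~\ref{lm:Rinf-free}, Lemma~\ref{lm:RiRj_free}, and — crucially in Case~I — the standing assumption that $R_N$ is independent. Then I would check the properties one at a time. Property (e) ($\Gamma(v_p) \cup \Gamma(v_q) \subseteq K'$) is immediate since $\Gamma_{G_m}(v_p) = \Gamma_{G_m}(v_q) = X \subseteq K \subseteq K'$. Property (c) ($G_m[Y']$ empty): $G_m[Y]$ is independent by Lemma~\ref{th:L1}(ii), $G_m[R_N]$ is independent by the Case~I hypothesis, and there are no $Y$--$R_N$ edges because $R_N \subseteq R$ and $G[Y,R]$ is empty bipartite by Lemma~\ref{th:L1}(iv). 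Property (a) ($G_m[K']$ a clique): $G_m[K]$ is a clique by Lemma~\ref{th:L1}(iii); each $r \in R_0^0$ is adjacent to all of $K$ by definition of $R_0$ and adjacent to all of $R_0 \supseteq R_0^0$ by Lemma~\ref{lm:R00}; so $K' = K \cup R_0^0$ is complete. Property (b) ($G_m[K';R']$ complete bipartite): $R' = R_0^1 \subseteq R_0$ is complete to $K$ by definition of $R_0$, and $R_0^1$ is complete to $R_0^0$ again by Lemma~\ref{lm:R00} (applied to vertices of $R_0^0$).

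The one property requiring genuine argument is (d): $G_m[Y';R']$ must be empty, i.e.\ there are no edges from $Y \cup R_N$ to $R_0^1$. The $Y$--$R_0^1$ part is free from Lemma~\ref{th:L1}(iv) as above. For the $R_N$--$R_0^1$ part, observe that $R_0^1$ was \emph{defined} as exactly those $r \in R_0$ with $\Gamma_m(r) \cap R_N = \emptyset$ — so no $R_N$--$R_0^1$ edges exist in $G_m$ by construction. I expect this to be the main ``obstacle,'' though it turns out to be essentially definitional: the real content was already extracted in Lemma~\ref{lm:R00} (which makes $R_0^0$ safe to put in the clique) and in Lemma~\ref{lm:hinge1} (which keeps the degree sequence inside $\dds{n}{\Sigma}{c_1}{c_2}$ and is what the whole Case~I construction is for). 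One should also double-check that $S, K', Y', R'$ are genuinely pairwise disjoint and exhaust $V(G_m)$ — this follows since $S, X, Y, Z, R$ partition $V$, $K = X \cup Z$, $R = R_N \uplus R_0$, and $R_0 = R_0^0 \uplus R_0^1$ — and that $K'$ is nonempty (it contains $X \supseteq \Gamma(v_p)$, which is nonempty as $\deg(v_p) = d_p + 1 \geq c_2 + 1 \geq 1$). Assembling these observations gives the hostile configuration, completing Case~I.
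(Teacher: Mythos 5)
Your proposal is correct and follows essentially the same route as the paper: the identical partition $S=\{v_p,v_q\}$, $K'=K\cup R_0^0$, $Y'=Y\cup R_N$, $R'=R_0^1$, with properties (a)--(e) verified from Lemma~\ref{th:L1}, Lemma~\ref{lm:R00}, the Case~I independence of $R_N$, and the definitions of $R_0$, $R_0^0$, $R_0^1$. Your extra checks (disjointness, exhaustion, that adjacencies outside $R_N\cup R_0$ are untouched by the hinge-flips) are sound and only make explicit what the paper leaves implicit.
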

\begin{proof}
We have $S=\{v_p,v_q\}$. Define
\begin{displaymath}
K':= K\cup R_0^0, \quad Y':=Y\cup R_N, \quad R':=R_0^1.
\end{displaymath}
 We should check that all conditions in Definition \ref{df:hostile}. (a)-(e)  hold.

\begin{enumerate}[(a)]
\item {\bf $G_m[K']$  is a complete subgraph.} \quad Indeed, $K$ is a clique by Lemma \ref{th:L1}, $[K,R_0]$ is complete by  the definition of $R_0$, and  $R^0_0$ is a clique by Lemma \ref{lm:R00}.

\item {\bf $G_m[K';R']$ is a complete bipartite subgraph.} \quad
Indeed, $R'=R^1_0\subset R_0$ and $[K,R_0]$ is complete by the definition of $R_0$, so $[R',K]$ is complete. Moreover, $[R^0_0,R^1_0]$ is complete by Lemma \ref{lm:R00}.

\item {\bf $G_m[Y']$ is an empty subgraph.} \quad Indeed, $Y$ is independent by Lemma \ref{th:L1}. $[R,Y]$ is empty by Lemma \ref{th:L1}. $R_N$ is independent because we are in \textbf{Case I}.

\item {\bf $G_m[Y';R']$ is an empty bipartite subgraph.} \quad
Indeed, $[Y,R]$ is empty by Lemma  \ref{th:L1}. $[R_N,R^1_0]$ is empty by the definition of $R^1_0$.

\item {\bf $S=\{v_p,v_q\}$ and   $\Gamma_{G_m}(v_p)\cup \Gamma_{G_m}(v_q) \subseteq K'$.} \quad This is trivial from the construction.
\end{enumerate}
\end{proof}

\noindent
Therefore, we arrived at a hostile configuration, which completes the proof
 of Lemma~\ref{lem:key} in \emph{Case I}.

%%%%%%%%%%%%%%%%%%%
\subsection{Proof of Lemma~\ref{lem:key}: Case II. There exists  $1\le i \le k$ such that  $R_i$ contains edges.} \label{sec:caseII}
%%%%%%%%%%%%%%%%%%%

Then,
\begin{displaymath}
R=R_0\cup R_i\cup R_\infty \text{ and $R_i$ contains an edge $ ab$}.
\end{displaymath}
\begin{figure}[H]
    \begin{center}
    \begin{tikzpicture}[scale=1]
    \tikzstyle{vertex}=[draw,circle,fill=black,minimum size=3,inner sep=0]
    \draw  (-.2,5) ellipse (14pt and 20pt);
    \node[vertex] (vp) at (0,5.3) [label=west:{$v_p$}] {};
    \node[vertex] (vq) at (0,4.7) [label=west:{$v_q$}] {};
    \node at  (-.9, 5) {\LARGE S};
    \draw[dashed] (vp) -- (vq);
    \draw (1,0) rectangle (3,6);
    \node at (.5,3.5) {\LARGE K};
    \draw (1.5,4) rectangle (2.5,5.3);
    \node at (2,5.5) {X};
    \draw (1.5,1) rectangle (2.5,3);
    \node at (2,.8) {Z};
    \node[vertex] (vi) at (2,2) [label=west:{$v_i$}] {};
    \draw (3.4,4) rectangle (5.5,6);
    \node at (6,5) {\LARGE Y};
    \draw[line width=1mm] (1.2,5.5) -- (1.2, 1.5);
    \draw[line width=1mm, dashed] (5.1,5.3) -- (5.1,4.1);
    \draw[line width=1mm, dashed] (3.7,4.6) -- (3.7,3);
    \draw[line width=1mm, dashed] (4.5,3.1) -- (5,3.1);
    \draw (4,2.7) rectangle (5.5,3.3);
    \node at (5.8,3) {$R_\infty$};
    \draw (4,1) rectangle (5.5,2);
    \node at (5.8,1.7) {$R_i$};
    \draw[dashed] ;
    \draw (4,0) rectangle (5.5,.6);
    \node at (5.8,.3) {$R_0$};
    \node[vertex] (b) at (4.5,1.7) [label=east:{$b$}] {};
    \node[vertex] (a) at (4.2,1.2) [label=east:{$a$}] {};
    \draw[very thin] (4.8,.3) -- (5,1.5);
    \draw[very thin] (5.1,.3) -- (5.25,3);
    \draw[very thin] (4.4,.2) -- (5.1,.2);
    \draw[dashed] (b) -- (vi);
    \draw[dashed] (a) -- (vi);
    \draw (a) -- (b);
    \draw[dashed] (5,3) -- (vi);
    \draw (3.4,-0.2) rectangle (6.2,3.4);
    \node at (6.5,3) {\Large R};
    \end{tikzpicture}
    \end{center}
\caption{In $Y\cup R_\infty$ there is no edge. The solid lines are existing edges, the dashed line are missing edges The thick lines note all edges / non-edges in the subgraph. } \label{fig2}
\end{figure}
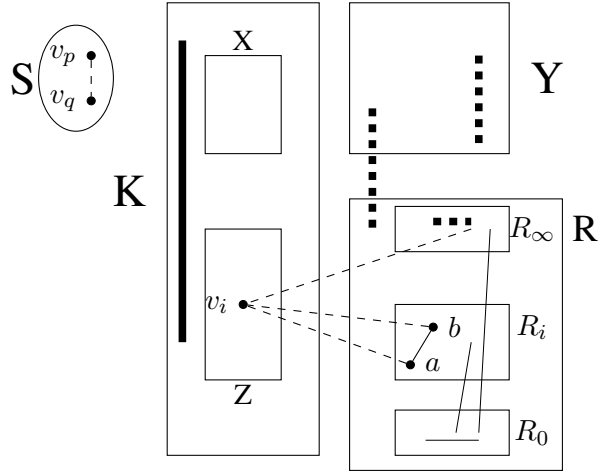

\noindent In Case II it will be more challenging  to transform  our graph $G$ into a graph $G_m$ which has a hostile configuration. We will do it in two steps.

\medskip \noindent\textbf{Step 1.}

In the first step we will "uplift" edges from $R_i$ into the bipartite graph $G[R_i,K]$
 as follows. Let $H_1,\ldots, H_t$ be an enumeration of the connected components of the subgraph $G[R_i].$ In each component $H_s$ fix  a vertex  $r_s$ and choose a spanning tree $T_s$ with root $r_s$ such that $\Gamma_{T_s}(r_s)= \Gamma_{G[R_i]}(r_s).$
Let define
\begin{equation}\label{eq:R0iR1i}
    R^1_i := \{ r_\beta : 1\le \beta \le t\} \ \text{ and }\ R^0_i :=  (R_i \setminus R^1_i).
\end{equation}

 \begin{figure}[H]
 \begin{subfigure}[t]{.5\textwidth}
    \begin{center}
    \begin{tikzpicture}%[scale=.9]
    \tikzstyle{vertex}=[draw,circle,fill=black,minimum size=3,inner sep=0]
  %  \draw  (-.2,5) ellipse (14pt and 20pt);
  %  \node[vertex] (vp) at (0,5.3) [label=west:{$v_p$}] {};
  %  \node[vertex] (vq) at (0,4.7) [label=west:{$v_q$}] {};
  %  \node at  (-.9, 5) {\LARGE S};
    \draw (2,6.7) ellipse (20pt and 10pt);
    \node [vertex] (vp) at (1.5,6.7) [label=west:{$v_p$}] {};
    \node [vertex] (vq) at (2.5,6.7) [label=east:{$v_q$}] {};
    \node at (.5,6.7) {\Large S};
    \draw[dashed] (vp) -- (vq);
    \draw (1,0) rectangle (3,6);
    \node at (.5,3.5) {\LARGE K};
    \draw (1.5,4) rectangle (2.5,5.3);
    \node at (2,5.5) {X};
    \draw (1.5,1) rectangle (2.5,3);
    \node at (2,.8) {Z};
    \node[vertex] (vi) at (2,2) [label=west:{$v_i$}] {};
    \draw (3.4,4) rectangle (5.5,6);
    \node at (6,5) {\LARGE Y};
    \draw[line width=1mm] (1.2,5.5) -- (1.2, 1.5);
    \draw[line width=1mm, dashed] (5.1,5.3) -- (5.1,4.1);
    \draw[line width=1mm, dashed] (3.7,4.6) -- (3.7,3);
    \draw[line width=1mm, dashed] (4.5,3.1) -- (5,3.1);
    \draw (4,2.7) rectangle (5.5,3.3);
    \node at (5.8,3) {$R_\infty$};
    \draw (4,2) rectangle (5.5,2.5);
    \node at (5.8,2.2) {$R_i^1$};
    \draw[dashed] ;
    \draw (4,.6) rectangle (5.5,1.8);
    \node at (5.8,1.3) {$R_i^0$};
    \draw (4,-.2) rectangle (5.5,.4);
    \node at (5.8,.1) {$R_0$};
    \node[vertex] (b) at (4.5,2.2) [label=east:{$r_s$}] {};
    \node[vertex] (a) at (4.2,.8) [label=east:{$x$}] {};
    \node[vertex] (y) at (5,1.4) [label=east:{$y$}] {};
    \draw[dashed] (b) -- (vi);
    \draw[dashed] (a) -- (vi);
    \draw[dashed] (b) -- (y);
    \draw[dashed] (y) -- (vi);
    \draw (a) -- (y);
    \draw (a) -- (b);
    \draw (3.4,-0.5) rectangle (6.2,3.4);
    \node at (6.5,3) {\Large R};
    \draw[loosely dotted,very thick,->] (4.4,1.9) .. controls (3.7,2.1) ..  (3.1,1.5);
    \draw[loosely dotted,very thick,->] (4.75,1.2)  .. controls (3.8,1.1) ..  (3.8,1.6);
    \end{tikzpicture}
    \end{center}
\caption{Step 1: Two upward twist operations} \label{fig3}
\end{subfigure}
\quad
\begin{subfigure}[t]{.5\textwidth}
\begin{center}
    \begin{tikzpicture}%[scale=.9]
    \tikzstyle{vertex}=[draw,circle,fill=black,minimum size=3,inner sep=0]
   % \draw  (-.2,5) ellipse (14pt and 20pt);
   % \node[vertex] (vp) at (0,5.3) [label=west:{$v_p$}] {};
   % \node[vertex] (vq) at (0,4.7) [label=west:{$v_q$}] {};
   % \node at  (-.9, 5) {\LARGE S};
    \draw (2,6.7) ellipse (20pt and 10pt);
    \node [vertex] (vp) at (1.5,6.7) [label=west:{$v_p$}] {};
    \node [vertex] (vq) at (2.5,6.7) [label=east:{$v_q$}] {};
    \node at (.5,6.7) {\Large S};
    \draw[dashed] (vp) -- (vq);
    \draw (1,0) rectangle (3,6);
    \node at (.5,3.5) {\LARGE K};
    \draw (1.5,4) rectangle (2.5,5.3);
    \node at (2,5.5) {X};
    \draw (1.5,1) rectangle (2.5,3);
    \node at (2,.8) {Z};
    \node[vertex] (vi) at (2,2) [label=west:{$v_i$}] {};
    \draw (3.4,4) rectangle (5.5,6);
    \node at (6,5) {\LARGE Y};
    \draw[line width=1mm] (1.2,5.5) -- (1.2, 1.5);
    \draw[line width=1mm, dashed] (5.1,5.3) -- (5.1,4.1);
    \draw[line width=1mm, dashed] (3.7,4.6) -- (3.7,3);
    \draw[line width=1mm, dashed] (4.5,3.1) -- (5,3.1);
    \draw (4,2.7) rectangle (5.5,3.3);
    \node at (5.8,3) {$R_\infty$};
    \draw (4,2) rectangle (5.5,2.5);
    \node at (5.8,2.2) {$R_i^1$};
    \draw[dashed] ;
    \draw (4,.6) rectangle (5.5,1.3);
    \node at (5.8,1) {$R_i^0$};
    \draw (4,-.2) rectangle (5.5,.4);
    \node at (5.8,.1) {$R_0$};
    \node[vertex] (b) at (4.5,2.2) [label=east:{$y_\ell$}] {};
    \draw[dashed] (b) -- (vi);
    \draw[rounded corners] (3.4,1.8) rectangle (6.2,3.4);
    \node at (6.7,3) {\Large R${}^\star_N$};
    \draw[rounded corners] (3.4,-.3) rectangle (6.2,1.5);
    \node at (6.7,1.2) {\Large R${}^\star_0$};
    \node[vertex] (a) at (4.2,0) [label=south:{$x_\ell$}] {};
    \node[vertex] (c) at (5.1,0) [label=south:{$z_\ell$}] {};
    \draw [dashed](a) -- (c);
    \draw (a) -- (b);
    \draw[loosely dotted,very thick,->] (4.5,1.5) .. controls (5.1,.8) ..  (4.65,0);
    \end{tikzpicture}
    \end{center}
\caption{Step 2: One downward twist operation} \label{fig4}
\end{subfigure}
\caption{Step 1 and 2}
\end{figure}
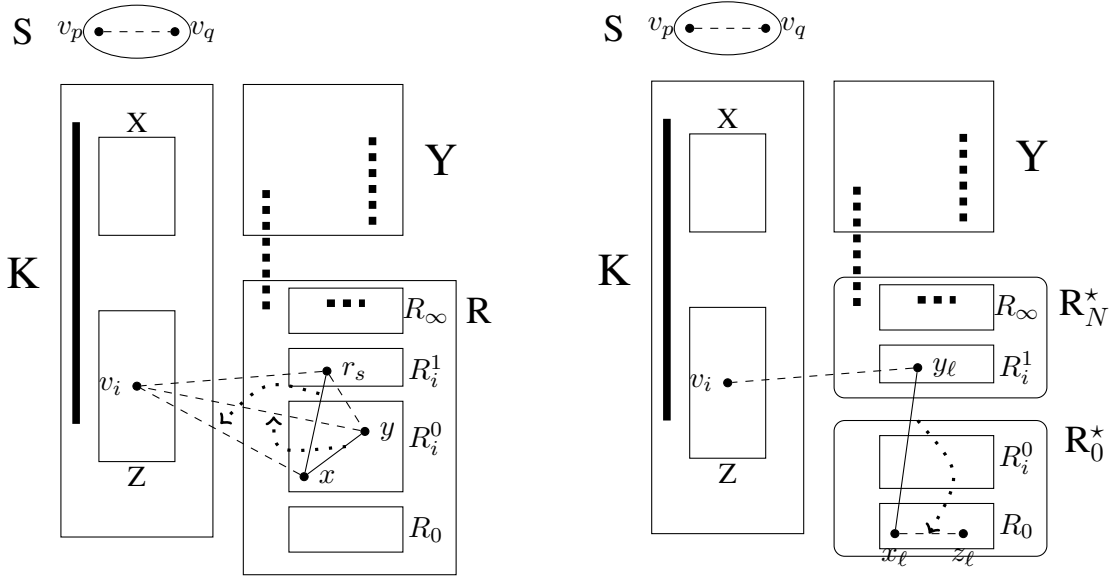
We define the \textbf{upward twist} operation of this graph as follows.
In each spanning tree $T_s$ let orient the edges outward from the root $r_s$. Then, for each oriented edge $\overrightarrow{xy}$ execute the hinge-flip operation $\hflip{y}{x}{v_i}$.

In other words,  in  each component $H_s$,  delete the edges of the spanning tree $T_s$, and connect each vertex but $r_s$ in $T_s$ to the vertex $v_i$. (See Figure \ref{fig3}.)

As a result of the upward twist operation, we obtain the  graph $G^\star$ with degree sequence $D^\star$. Since the upward twist operation was  obtained as a sequence of hinge-flip operation, $\sum D^\star=\sum D^\diamond=\sum D+2$.

\begin{lemma}\label{th:Sou-5}
\begin{equation}\label{eq:Sou-4}
\left (D^\star + 1_{-q}^{-p}\right ) \in \dds{n}{\Sigma}{c_1}{c_2}.
\end{equation}
\end{lemma}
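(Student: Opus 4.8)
\textbf{Proof plan for Lemma~\ref{th:Sou-5}.}

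The plan is to mirror the structure of the proof of Lemma~\ref{lm:hinge1} from Case~I, adapting it to the upward twist. Since the upward twist is a composition of hinge-flip operations, the degree sum is preserved, so $\sum(D^\star + 1^{-p}_{-q}) = \sum D = \Sigma$. It therefore remains to verify that $c_1 \ge \deg_{G^\star}(v) \ge c_2$ for every vertex $v$ whose degree was changed by the upward twist. The only affected vertices are $v_i$ (whose degree strictly increases, as it receives one new edge for each non-root vertex of each spanning tree $T_s$) and the non-root vertices of the $T_s$'s, i.e. the vertices of $R^0_i$ (each of which has its degree changed by the hinge-flips along the edges of $T_s$ incident to it). First I would handle $R^0_i$: for $r\in R^0_i$, the edges inside $R_i$ are only rearranged, and the edges to $X$ are untouched, so $\deg_{G^\star}(r) \ge |X| - 1 \ge c_2$ by Lemma~\ref{th:L1}\eqref{en:Xr}; and since each such $r$ loses exactly its parent edge in $T_s$ and gains the edge to $v_i$, while possibly losing child edges, the degree does not increase, so $\deg_{G^\star}(r) \le \deg_G(r) \le c_1$. (One should double-check the bookkeeping: a non-root, non-leaf vertex loses one parent edge and all its child edges but gains only one edge to $v_i$, so its degree can only decrease or stay the same; the $c_2$ bound uses that the $X$-neighborhood is never touched.)

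The main obstacle, exactly as in Case~I, is the upper bound $\deg_{G^\star}(v_i) \le c_1$, since $v_i$ is the one vertex whose degree genuinely grows. Here I would argue by contradiction: suppose $\deg_{G^\star}(v_i) > c_1$. Since $\deg_{G^\diamond}(v_i) = \deg_G(v_i) \le c_1$ (as $v_i \in K$, and $v_i \neq v_p, v_q$), the excess must come from the newly added edges $(v_i, r)$ for $r$ ranging over the non-root vertices of the spanning trees. The key structural input is that $v_i$ has a non-neighbor in $R_i$ to serve as a ``reference vertex'': by definition of $R_i$ we have $K \setminus \Gamma_G(r) = \{v_i\}$ for every $r \in R_i$, so $v_i r$ is a non-edge in $G$ (and in $G^\diamond$) for all $r\in R_i$, while every other vertex of $K$ is fully joined to $R_i$. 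I would then compare $\deg_{G^\star}(v_i)$ against $\deg_G(v_j)$ for a suitably chosen $v_j \in K \setminus \{v_i\}$ (one exists since $R_i\neq\emptyset$ forces $|K|\ge 2$, indeed $k\ge 2$ because $Q>0$ forces $K$ nonempty and $R_i$ is indexed inside $\{1,\dots,k\}$), using that $v_j$ is adjacent to all of $R_i$ in $G$. The inequality $\deg_{G^\star}(v_i) > c_1 \ge \deg_G(v_j)$, after cancelling the common $R_i$-contribution and the common $X$-contribution, would produce a vertex $d$ with $(v_i, d)$ an edge in $G^\star$ but $(v_j, d)$ a non-edge in $G$; tracking which edges the upward twist created shows $(v_i,d)$ is in fact an edge already in $G$ unless $d \in R^0_i$, and in either case one locates a short alternating sub-trail in $G[K\cup R]$ that extends (via the $Y$-vertices guaranteed by Lemma~\ref{lm:Ysingleton} and the definition of $Y$) to a genuine witness trail of length at most $11$ between $v_p$ and $v_q$ — or, more directly, a forbidden pre-witness trail of length at most $5$ in $G[K\cup R]$ between two distinct vertices of $K$, contradicting Lemma~\ref{lm:sou1-2}(1).

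I expect the delicate point to be the case analysis on the location of the ``witness'' vertex $d$ produced by the degree comparison: one must carefully distinguish whether $d$ lies in $R_0$, $R^0_i$, $R^1_i$, or $R_\infty$, and in each case exhibit the forbidden trail, keeping track of which edges are present in $G$ versus $G^\star$ (only edges inside $R_i$ and from $R_i$ to $v_i$ were altered). The cleanest route is probably to phrase everything in terms of $G$ directly: since the upward twist only moves edges within $R \cup \{v_i\} \subseteq K \cup R$, any alternating trail we build in $G^\star$ using these vertices can be translated back into an alternating trail in $G$ of the same length, so it suffices to derive a contradiction with Lemma~\ref{lm:sou1-2}(1) or the earlier structural lemmas applied to $G$ itself. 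The $c_2$ lower bounds and the degree-sum preservation are routine; the entire weight of the argument sits in the $v_i$ upper bound.
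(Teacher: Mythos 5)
Your overall strategy matches the paper's: degree sums are preserved, the lower bounds follow from the untouched $X$-adjacencies, and the whole weight sits on showing $\deg_{G^\star}(v_i)\le c_1$ by comparing against a vertex $v_j\in K\setminus\{v_i\}$ that is adjacent to all of $R_i$. But there is a genuine gap at exactly that crucial step. Your counting produces only \emph{one} vertex $d$ adjacent to $v_i$ and non-adjacent to $v_j$, and you then hope to reach a contradiction via Lemma~\ref{lm:sou1-2}(1) or an $11$-witness trail ``in either case''. The problematic case is $d\in Y$: then the natural trail $v_j\cdots d - v_i\cdots a - b\cdots v_i$ is \emph{not} contained in $G[K\cup R]$, so Lemma~\ref{lm:sou1-2}(1) does not apply, and the attempt to wrap it into an $11$-witness trail between $v_p$ and $v_q$ stalls, because by Lemma~\ref{lm:Ysingleton} the vertex $v_i$ has no second $Y$-neighbour to exit through (any wrapping would reuse the edge $(v_i,d)$), while $a,b$ are adjacent to every vertex of $K\setminus\{v_i\}$, so no alternative non-edge into $K$ is available; nor is a $Y$-neighbour of $v_j$ guaranteed. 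The paper's proof is engineered precisely to avoid this: since the roots $R^1_i$ are non-adjacent to $v_i$ in $G^\star$, one has $|R_i\cap\Gamma_{G^\star}(v_i)|\le |R_i|-1$, and together with the strict inequality $\deg_{G^\star}(v_i)>c_1\ge\deg_G(v_j)$ this yields $|\Gamma_G(v_i)\setminus(R_i\cup K)|\ge|\Gamma_G(v_j)\setminus(R_i\cup K)|+2$, hence \emph{two} vertices $d_0,d_1\in Y\cup R_\infty$; Lemma~\ref{lm:Ysingleton} then forces one of them into $R_\infty$, where the length-$5$ pre-witness trail lies entirely in $G[K\cup R]$ and Lemma~\ref{lm:sou1-2}(1) kills it. You cite Lemma~\ref{lm:Ysingleton} but never set up the two-vertex count that makes it bite; as written, the $d\in Y$ case is unresolved.

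Two smaller points. First, ``suitably chosen $v_j$'' must mean $v_j\in X$ with $j\ne i$ (as in the paper), since $v_j\in\Gamma_G(v_p)\cap\Gamma_G(v_q)$ is what rules out $d\in S$ in the location analysis; your existence argument for $v_j$ (invoking $Q>0$, which belongs to the second half of the paper and is irrelevant here) should instead just use $|X|=\deg_G(v_p)\ge 2$. Second, your bookkeeping of affected vertices omits the roots $r_s\in R^1_i$, whose degrees strictly decrease when their component is not a singleton; this is harmless (the $c_2$ bound via the untouched $X$-edges covers them, exactly as for $R^0_i$), but they should be included.
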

\begin{proof}
If $\Gamma_{G^\star}(v)\ne\Gamma_G(v)$, then $v=v_i$ or $v\in R$. So we should show $c_2\le \deg_{G^\star}(v)\le c_1$ for $v\in \{v_i\}\cup R$.

If $v\in R$, then $\deg_{G^\star}(v)\le\deg_{G}(v)\le c_1$ and
$|X\setminus \Gamma_{G^\star}(v)|\le |X\setminus \Gamma_{G}(v)|\le 1$  by Lemma \ref{th:L1}, so  $\deg_{G^\star}(v)\ge |X|-1\ge c_2$. Since $\Gamma_{G^\star}(v_i)\cap K=\Gamma_{G}(v_i)\cap K=K\setminus \{v_i\}$, we have $\deg_{G^\star}(v_i)\ge |K|-1\ge c_2$.

Finally,  we should prove that  $\deg_{G^\star}(v_i),$  does not exceed $c_1.$ To show that fix  $1\le j \le k$ such that $j\ne i$ and $v_j\in \Gamma_G(v_p)\cap \Gamma_G(v_q)$.   Assume on the contrary that
\begin{displaymath}
\deg_{G^\star}(v_i)> c_1 \ge |\Gamma_{G}(v_j)|=|\Gamma_{G^\star}(v_j)|.
\end{displaymath}
Since  $R_i\setminus \Gamma_{G^*}(v_i)=\{r_1,\dots, r_t\}$, we have
$|R_i\cap  \Gamma_{G^\star}(v_i)|\le |R_i|-|\{r_1,\dots, r_t\}|   \le    |R_i|-1$. Moreover, $|K\cap  \Gamma_{G^\star}(v_i)|=|K\cap  \Gamma_{G^\star}(v_j)|=|K|-1$. Hence,
\begin{multline*}
\left( |R_i|-1 \right) + (|K|-1)  + |\Gamma_{G^\star}(v_i)\setminus (R_i\cup K)| \ge \deg_{G^\star}(v_i) > c_1\ge  \\
\ge \deg_G(v_j)=  |\Gamma_{G}(v_j)\setminus (R_i\cup K)| + |R_i|+|K|-1.
\end{multline*}
Since  $\Gamma_{G^\star}(v_i)\setminus R_i=\Gamma_{G}(v_i)\setminus R_i$,
we have
\begin{displaymath}
|\Gamma_{G}(v_i)\setminus (R_i\cup K)|> |\Gamma_{G}(v_j)\setminus (R_i\cup K)|+1.
\end{displaymath}
 Consequently, we can choose
 \begin{displaymath}
 \{d_0,d_1\}\in [\Gamma_{G}(v_i)\setminus \Gamma_{G}(v_j) \setminus (R_i\cup K)]^2.
 \end{displaymath}
Then, $d_{\alpha}\notin \Gamma_{G}(v_j)$ implies $d_{\alpha}\notin R_0\cup S$. Hence, $\{d_0,d_1\}\subset Y\cup R_{\infty}$. Since $|\Gamma_G(v_i)\cap Y|\le 1$  by Lemma \ref{lm:Ysingleton}, we can assume that $d_0\in R_\infty$. Then in $G[K\cup R]$ we have  the following pre-witness trail of length $5$:
\begin{displaymath}
v_j\cdots d_0-v_i\cdots a-b\cdots v_i.
\end{displaymath}
Contradiction, we have $\deg_{G^\star}(v_i)\le c_1$.
\end{proof}

\noindent\textbf{Step 2.}

Unfortunately, the graph $G^\star$ may contain witness trails, so we cannot simply
say: ``apply Case I for $G^\star$''.
Instead of that, in the second step we just imitate the ``downward twist'' construction of \textbf{Case I} in the graph $G^\star$. Recalling that
 $R^1_i$ and $R^0_i$  were defined in \eqref{eq:R0iR1i},
 observe that
\begin{displaymath}
E(G^\star)\setminus E(G)= G^\star[\{v_i\},R^0_i], \qquad
E(G)\setminus E(G^\star)\subset R^0_i \times (R^1_i\cup R^0_i).
\end{displaymath}
Moreover,
\begin{equation}\label{eq:R}
E(G^\star)\cap [(R^0_i\cup R^1_i) \times R^1_i]=\emptyset.
\end{equation}
We also have
\begin{equation}\label{eq:R01}
\text{$\Gamma_{G^\star}(w)\supset K$ for $w\in R^0_i$.}
\end{equation}
Write
\begin{displaymath}
R^\star_0=R_0\cup R^0_i, \qquad
R^\star_N=R_\infty\cup R^1_i.
\end{displaymath}
Observe that
\begin{equation}\label{eq:RN}
    E(G^\star)\cap [R_N^*]^2=\emptyset
\end{equation}
by Lemma \ref{lm:RiRj_free} and by $E(G^\star)\cap {\left [R^1_i\right ]}^{2}= \emptyset$.

\medskip
After this preparation,  generate a sequence of graphs $G_0, G_1,\ldots, G_m$ and their degree sequences $D_0, D_1, \ldots, D_m$ with consecutive hinge-flip operations as follows.

Let $G_0=G^\star$. Assume that $G_\ell$ is already produced. Pick
$x_\ell \ne z_\ell \in R^\star_0$ and $y_\ell \in R^\star_N$ such that $(x_\ell, y_\ell)$ is an edge and $(x_\ell, z_\ell)$ is a non-edge in $G_\ell.$ Apply the corresponding hinge-flip operation $\hflip{x_\ell}{y_\ell}{z_\ell}$ for $G_\ell$ to obtain $G_{\ell+1}$ (They are just the {downward twist} operations from \textbf{Case I}, see Figure \ref{fig4}.) We stop the construction if there are no suitable $x_\ell$, $y_\ell$ and $z_\ell$.

Observe that
\begin{equation}\label{eq:x}
\{x_\ell: \ell<m\}\subset R_0.
\end{equation}
Indeed, $(x_\ell, y_\ell)$ is an edge in $G_\ell $, and there is no edge between $R^0_i$ and $R^\star_N=R^1_i\cup R_\infty$ in $G_\ell$ by  \eqref{eq:R} and by Lemma \ref{lm:RiRj_free}.

\begin{lemma}\label{lm:marad}
$D_m+1^{-p}_{-q}\in \mD$.
\end{lemma}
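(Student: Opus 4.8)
The plan is to mimic the proof of Lemma~\ref{lm:hinge1} from Case~I, taking careful account of the two-step construction. Since all hinge-flip operations preserve the degree sum, $\Sigma(D_m) = \Sigma(D^\star) = \Sigma(D^\diamond) = \Sigma + 2$, so $D_m + 1^{-p}_{-q}$ has the correct sum $\Sigma$. It remains to verify $c_2 \le \deg_{G_m}(v) \le c_1$ for every $v$ whose degree was altered, i.e.\ for $v \in \{v_i\} \cup R$ (recalling from Lemma~\ref{th:Sou-5} that $G^\star$ already satisfies the bounds, and the Step~2 twists only touch $R^\star_0 \cup R^\star_N = R_0 \cup R^0_i \cup R_\infty \cup R^1_i = R$).

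First the lower bound. For $v \in R$ the argument of Lemma~\ref{lm:hinge1} carries over verbatim: $\Gamma_{G_m}(v) \cap X = \Gamma_G(v) \cap X$ (no twist ever adds or removes an edge incident to $X$), and $|X \setminus \Gamma_G(v)| \le 1$ by Lemma~\ref{th:L1}\eqref{en:Xr}, so $\deg_{G_m}(v) \ge |X| - 1 \ge c_2$. For $v_i$ we have $\Gamma_{G_m}(v_i) \cap K = \Gamma_G(v_i) \cap K = K \setminus \{v_i\}$, hence $\deg_{G_m}(v_i) \ge |K| - 1 \ge c_2$; note $v_i$ is untouched by the Step~2 downward twists because those only move edges among $R^\star_0$ and from $R^\star_N$ into $R^\star_0$, never incident to $K$.

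Now the upper bound. For $y \in R^\star_N = R_\infty \cup R^1_i$, the downward twists can only \emph{decrease} its degree (it plays the role of $y_\ell$), so $\deg_{G_m}(y) \le \deg_{G^\star}(y) \le c_1$. The vertex $v_i$ is untouched in Step~2, so $\deg_{G_m}(v_i) = \deg_{G^\star}(v_i) \le c_1$ by Lemma~\ref{th:Sou-5}. The real work, exactly as in Lemma~\ref{lm:hinge1}, is bounding $\deg_{G_m}(z)$ for $z \in R^\star_0 = R_0 \cup R^0_i$. Suppose $\deg_{G_m}(z) > c_1$; since $\deg_{G^\star}(z) \le c_1$, the degree of $z$ was raised by some twist $\hflip{x_\ell}{y_\ell}{z_\ell}$ with $z = z_\ell$, and by \eqref{eq:x} we have $x_\ell \in R_0$, while $y_\ell \in R^\star_N$. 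I would fix a vertex $v_j \in \Gamma_G(v_p) \cap \Gamma_G(v_q)$ with $j \ne i$ (so $v_j \in X$, hence $v_j y_\ell$'s adjacency status is determined: for $y_\ell \in R_\infty$ pick $j$ with $v_j y_\ell$ a non-edge, for $y_\ell \in R^1_i$ we have $v_i y_\ell$ a non-edge and can take $j$ with $v_j y_\ell$ a non-edge since $y_\ell \in R_i$ means only $v_i$ is missing—wait, then pick $v_j$ adjacent; in either subcase we arrange, as in Lemma~\ref{lm:hinge1}, that $v_i y_\ell$ is an edge and $v_j y_\ell$ is a non-edge in $G$, using that $y_\ell \in R^\star_N$ misses at least two vertices of $K$ when in $R_\infty$, or misses exactly $v_i$ when in $R^1_i$). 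Then, using that $G_\ell[R^\star_0, \{v_j\}]$ is complete (true in $G$ by definition of $R_0$ and by \eqref{eq:R01} for $R^0_i$, and preserved by the twists which never touch edges at $v_j \in K$), the counting inequality
\begin{displaymath}
\deg_{G \setminus R^\star_0}(z_\ell) + |R^\star_0| \ge \deg_{G_m}(z_\ell) > c_1 \ge \deg_G(v_j) = \deg_{G \setminus R^\star_0}(v_j) + |R^\star_0|
\end{displaymath}
yields a vertex $d$ with $(z_\ell, d)$ an edge and $(v_j, d)$ a non-edge in $G$. As before, $(z_\ell, d)$ being an edge with $z_\ell \in R_0 \cup R^0_i$ forces $d \notin Y \cup S$, and $(v_j, d)$ a non-edge with $v_j \in K$ forces $d \notin K \cup R_0 \cup R^0_i$, leaving $d \in R^1_i \cup R_\infty$. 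Then $v_j \cdots d - z_\ell \cdots x_\ell - y_\ell \cdots v_i$ is a forbidden length-$5$ pre-witness trail in $G[K \cup R]$ (no edge repetition since $d \ne x_\ell$ lie in different blocks of $R$, and the trail alternates correctly starting and ending with non-edges), contradicting Lemma~\ref{lm:sou1-2}(1). The main obstacle is the bookkeeping of which adjacencies survive the Step~1 upward twists into $G^\star$ and then the Step~2 downward twists, in particular verifying that $\deg_{G \setminus R^\star_0}(z_\ell)$ is the right quantity to compare (i.e.\ that the downward twists add to $z_\ell$ only neighbours inside $R^\star_0$) and that the resulting $d$ genuinely lands in $R^1_i \cup R_\infty$ so that the pre-witness trail is legitimate; this is where \eqref{eq:R}, \eqref{eq:R01}, \eqref{eq:x} and Lemma~\ref{lm:RiRj_free} all get used.
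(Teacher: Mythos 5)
Your overall architecture matches the paper's proof: the degree sum is preserved, the lower bounds follow from $|X\setminus\Gamma_G(\cdot)|\le 1$, degrees in $R^\star_N$ only decrease, $v_i$ is untouched in Step 2, and your single counting inequality with $|R^\star_0|$ correctly absorbs the paper's two corrections (the $|R^\star_0|-1$ for neighbours inside $R^\star_0$ and the possible extra neighbour $v_i$ that $z_\ell\in R^0_i$ may have acquired in Step 1). The genuine gap is the last leg of the pre-witness trail, which is precisely the point where Case II cannot be copied from Case I. Your trail $v_j\cdots d - z_\ell\cdots x_\ell - y_\ell\cdots v_i$ needs $(y_\ell,v_i)$ to be a \emph{non-edge}, yet your parenthetical concludes that you ``arrange that $v_iy_\ell$ is an edge and $v_jy_\ell$ is a non-edge,'' which contradicts the trail you then write; moreover that arrangement is impossible when $y_\ell\in R^1_i$, since then $K\setminus\Gamma_G(y_\ell)=\{v_i\}$, so $v_iy_\ell$ is a non-edge and $v_jy_\ell$ is an edge for every $j\ne i$. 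Conversely, when $y_\ell\in R_\infty$ nothing guarantees that $v_i$ is among the (at least two) vertices of $K$ missed by $y_\ell$, so your written trail may end with an edge and is then not a pre-witness trail at all.

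The repair is the paper's choice of the endpoint: set $i^\star=i$ if $y_\ell\in R^1_i$ (then $(v_{i^\star},y_\ell)$ is a non-edge by the definition of $R_i$), and if $y_\ell\in R_\infty$ pick $i^\star\ne j$ with $(v_{i^\star},y_\ell)$ a non-edge, which exists because $y_\ell$ misses at least two vertices of $K$. Then
\begin{displaymath}
v_{i^\star}\cdots y_\ell - x_\ell\cdots z_\ell - d\cdots v_j
\end{displaymath}
is a length-$5$ pre-witness trail in $G[K\cup R]$ between the distinct vertices $v_{i^\star},v_j\in K$, contradicting Lemma~\ref{lm:sou1-2}(1). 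Note that the adjacency between $v_j$ and $y_\ell$ is irrelevant (the trail never uses that pair), so there is nothing to arrange about $v_jy_\ell$; the vertex $v_j$ only needs to lie in $X\setminus\{v_i\}$ so that the counting step $\deg_G(v_j)=\deg_{G\setminus R^\star_0}(v_j)+|R^\star_0|$ is valid, using $j\ne i$ to get that $v_j$ is complete to $R_0\cup R^0_i$ in $G$. With this endpoint fix, the remaining bookkeeping you flag (that the edge $(x_\ell,y_\ell)$ and the non-edge $(x_\ell,z_\ell)$ persist back in $G$ via \eqref{eq:x}, \eqref{eq:R} and Lemma~\ref{lm:RiRj_free}, and that $d\in R^1_i\cup R_\infty$) goes through exactly as in the paper.
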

\begin{proof}
Since we carried out hinge flip operations, $\sum D_m=\sum D^\star=\sum D^\diamond=\sum (D+1^{+p}_{+q})=\sum D+2$, furthermore we changed degrees  only in $R^\star_0\cup R^\star_N$.

\medskip\noindent
In $R^\star_N$ we decreased the degrees. Moreover, for each $r\in R^\star_N$, we have $|X\setminus \Gamma_{G_m}(r)|=|X\setminus \Gamma_{G^\star}(r)|\le |X\setminus \Gamma_{G}(r)|\le 1$ by Lemma \ref{th:L1}, and so $\deg_{G_m}(r)\ge |\Gamma_G(r)\cap X|-1\ge c_2$. So $c_2\le \Gamma_{G_m}(r)\le c_1$ for $r\in R_N^\star$.

\medskip\noindent
In $R^\star_0$ we increased some degrees. So, to complete the proof of the Lemma, we should show $\deg_{G_{m}}(z)\le c_1$ for each $z\in R^\star_0$.

Assume on the contrary, that $d_{G_m}(z)>c_1$ for some $z\in R^\star_0$. Since $\deg_{G^\star}(z)\le c_1$, we increased the degree of $z$  by some   hinge flip operation, i.e.,  there is $1\le \ell\le m$ such that $z_\ell=z.$ Then $x_\ell, z_\ell \in R^\star_0$ and $y_\ell \in R^\star_N$, furthermore $(x_\ell, y_\ell)$ is an edge and $(x_\ell, z_\ell)$ is a non-edge in $G_{\ell}$. By \eqref{eq:x} we know $x_\ell\in R_0$. Hence,
\begin{equation}\label{eq:EN1}
(x_\ell, y_\ell) \text{ is an edge in } G, \text{ and } (x_\ell, z_\ell) \text{ is a non-edge in } G.
\end{equation}

\medskip\noindent
Since $y_\ell\in R^*_N$, we can pick  $1\le j\le k$ such that $j\ne i$ and $v_j\in X \cap \Gamma_G(z_\ell)$. We know that the bipartite graph $G_\ell[R^\star_0,\{v_{j}\}]$ is complete by \eqref{eq:R01} and by the definition of $R_0$. Hence, we have the following inequality chain:
\begin{displaymath}
\deg_{G^\star\setminus R^\star_0}(z_\ell)+(|R^\star_0|-1)\ge \deg_{G_m}(z_\ell)>c_1\ge \deg_G(v_{j})= \deg_{G\setminus R_0}(v_{j})+|R^\star_0|.
\end{displaymath}
Moreover,
$\Gamma_{G^\star\setminus R^\star_0}(z_\ell)\setminus \Gamma_{G\setminus R^\star_0}(z_\ell)\subset \{v_i\}$,
and so $\deg_{G^\star\setminus R^\star_0}(z_\ell)\le \deg_{G\setminus R^\star_0}(z_\ell)+1.$

Putting together, we obtain
\begin{displaymath}
    \deg_{G\setminus R^\star_0}(z_\ell)>\deg_{G\setminus R_0}(v_{j}).
\end{displaymath}
Thus,  there exists a vertex $d$ such that $(z_\ell,d)$ is an edge and
$(v_{j},z_\ell)$ is a non-edge in $G$.

Then $d\notin Y\cup S$ because $(z_\ell,d)$ is an edge and $z_\ell\in R^\star_0$.
Moreover,  $d\notin K\cup R_0$ because $(v_j,d)$ is a non-edge.
Thus, the only possibility is that  $d\in R_N$.

\begin{itemize}
\item If $y\in R_i$, then let $i^\star=i$.
\item If $y\in R_\infty$, let $i^\star\ne j$ such that $(v_{i^\star},y_\ell)$ is a non-edge.
\end{itemize}
Thus $i^\star\ne j$, and
$$
v_{i^\star}\cdots y_\ell - x_\ell \cdots z_\ell-  d\cdots v_{j}
$$
is a prohibited pre-witness trail of length $5$ in $G[K\cup  R]$.

The obtained contradiction proves that $d_{G_m}(z)\le c_1$ for all $z\in R^\star_0$.
\end{proof}

We carried out the construction, and we verified that the degree sequence $D_m$ can be obtained as $D_m'+1^{+p}_{+q}$ for some $D'_m\in \mD$.
Finally, we will find a hostile configuration for $G_m$. Let
\begin{displaymath}
    R_K=\{r\in R^\star_0:
    \text{there is $G_m$-edge between $r$ and $R^\star_N$}\}=\Gamma_{G_m}(R_N^*)\cap R^*_0.
\end{displaymath}

\begin{lemma}\label{lm:observe}\label{eq:RK}
        $R_K\subset R_0$ and $[R_K,R^\star_0]\subset E(G_m)$.
\end{lemma}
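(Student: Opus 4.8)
\textbf{Proof plan for Lemma~\ref{lm:observe}.}

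The plan is to mirror the structure of Lemma~\ref{lm:R00} from Case~I, but carefully tracking the subtlety that $R^\star_0 = R_0 \cup R^0_i$ is not homogeneous. The statement has two parts: first that $R_K \subseteq R_0$, and second that every vertex of $R_K$ is $G_m$-adjacent to all of $R^\star_0$.

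For the inclusion $R_K \subseteq R_0$: take $r \in R_K$, so $r \in R^\star_0$ and there is a $G_m$-edge between $r$ and some $y \in R^\star_N = R_\infty \cup R^1_i$. I would argue that $r \notin R^0_i$. Indeed, by \eqref{eq:R01} we have $\Gamma_{G^\star}(w) \supseteq K$ for $w \in R^0_i$, and the downward twists of Step~2 only add edges inside $R^\star_0$ and remove edges whose $R^\star_N$-endpoint loses degree; by \eqref{eq:R} and Lemma~\ref{lm:RiRj_free} there is no edge from $R^0_i$ to $R^\star_N$ in $G^\star$, and the downward twists never create such an edge (they only move edges from $R^\star_N$ down into $R^\star_0$ at the $R^\star_0$-endpoint, never connecting $R^0_i$ to $R^\star_N$ — this is exactly the observation \eqref{eq:x} that all the $x_\ell$ lie in $R_0$). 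Hence in $G_m$ there is still no edge between $R^0_i$ and $R^\star_N$, so $r$ cannot be in $R^0_i$, forcing $r \in R_0$.

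For the second part, $[R_K, R^\star_0] \subseteq E(G_m)$: this is the analogue of Lemma~\ref{lm:R00}, proved by the same stopping-condition argument. Suppose $r \in R_K$ and $z \in R^\star_0$ with $(r,z)$ a non-edge in $G_m$. Since $r \in R_K$, there is $y \in R^\star_N$ with $(r,y)$ an edge in $G_m$. But then $x_\ell := r$, $z_\ell := z$, $y_\ell := y$ would be a valid choice for a further downward twist $\hflip{r}{y}{z}$ in $G_m$, contradicting that the process stopped at $G_m$. (One should note $r \neq z$ since $(r,z)$ is a non-edge.) This closes the argument. The only place requiring care — and the main obstacle — is the first part: one must be sure that the Step~2 downward twists genuinely never introduce an edge between $R^0_i$ and $R^\star_N$, which rests on combining \eqref{eq:R}, \eqref{eq:x}, and Lemma~\ref{lm:RiRj_free} correctly; the second part is essentially immediate from the stopping rule.
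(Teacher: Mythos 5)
Your proposal is correct and follows essentially the same route as the paper: the inclusion $R_K\subset R_0$ is obtained by noting that the Step~2 twists only add edges inside $R^\star_0$, so any $G_m$-edge between $R^0_i$ and $R^\star_N$ would already be present in $G^\star$ (respectively $G$), contradicting \eqref{eq:R} and Lemma~\ref{lm:RiRj_free}; and the completeness $[R_K,R^\star_0]\subset E(G_m)$ follows from the stopping rule exactly as in Lemma~\ref{lm:R00}. (Only a cosmetic remark: the fact you need is that the added edges $(x_\ell,z_\ell)$ lie inside $R^\star_0$, which is what prevents new $R^0_i$--$R^\star_N$ edges; \eqref{eq:x} is a consequence of this no-edge property rather than the reason for it.)
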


\begin{proof}
    Since
    \begin{equation}\label{eq:sl1}
    E(G_m)\cap [R^0_i,R_i^1]\subset E(G^*)\cap [R^0_i,R_i^1]=\emptyset
    \end{equation}
    by  \eqref{eq:R},
    and
    \begin{equation}\label{eq:sl2}
    E(G_m)\cap [R^0_i,R_\infty]\subset E(G)\cap [R_i,R_\infty]=\emptyset
    \end{equation}
    by Lemma \ref{lm:RiRj_free}, we have
    \begin{equation}\label{eq:sl3}
    E(G_m)\cap [R^0_i,R_N^*]=(E(G_m)\cap [R^0_i,R_i^1])\cup (E(G_m)\cap [R^0_i,R_\infty])=\emptyset.
    \end{equation}
    Hence, $R_K\cap R^0_i=\emptyset$, and so $R_K\subset R_0$.

To show $[R_K,R^\star_0]\subset E(G_m)$
assume on the contrary that $x\in R_K$ and $z\in R^\star_0$ such that $(x,z)\notin E(G_m)$.
Since $x\in R_K$, there is $y\in R^*_N$ such that $(x,y)$ is an edge in $G_m$. So we have a  further  downward twist operation  $\hflip{x}{y}{z}$ in $G_m$. Contradiction.
\end{proof}

\begin{figure}[H]
\begin{center}
    \begin{tikzpicture}[scale=1]
    \tikzstyle{vertex}=[draw,circle,fill=black,minimum size=3,inner sep=0]
    \draw  (-.2,5) ellipse (14pt and 20pt);
    \node[vertex] (vp) at (0,5.3) [label=west:{$v_p$}] {};
    \node[vertex] (vq) at (0,4.7) [label=west:{$v_q$}] {};
    \node at  (-.9, 5) {\LARGE S};
    \draw[dashed] (vp) -- (vq);
    \draw (1,0) rectangle (2.8,6);
    \node at (1.7,-.3) {\Large K};
    \draw (1.5,4) rectangle (2.5,5.3);
    \node at (2,5.5) {X};
    \draw (1.5,1) rectangle (2.5,3);
    \node at (2,.8) {Z};
    \node[vertex] (vi) at (2,2) [label=west:{$v_i$}] {};
    \draw (4,5) rectangle (5.5,6);
    \node at (5.8,5.5) {Y};
    \draw[line width=1mm] (1.2,5.5) -- (1.2, 1.5);
    \draw[line width=1mm, dashed] (3.6,5.3) -- (3.6,3.2);
    \draw (4,3.8) rectangle (5.5,4.8);
    \node at (5.8,4.2) {$R_\infty$};
    \draw (4,2.7) rectangle (5.5,3.7);
    \node at (5.8,3.2) {$R_i^1$};
    \draw[rounded corners] (3.4,2.5) rectangle (6.2,6.2);
    \node at (6.7,4.5) {\Large Y'};
    \draw[dashed] ;
    \draw (4,1.1) rectangle (5.5,2);
    \node at (5.8,1.6) {$R_i^0$};
    \draw (4,.1) rectangle (5.5,.9);
    \node at (6.3,.4) {$R_0\setminus R_K$};
    \draw[rounded corners] (3.4,-.1) rectangle (7.1,2.25);
    \node at (7.5,1.2) {\Large R'};
    (4.65,0);
    \draw (4,-1.2) rectangle (5.5,-.4);
    \node at (5.9,-.9) {$R_K$};
    \draw[dashed,thick,rounded corners] (.8,6.2) -- (.8,-1.5) -- (6.3,-1.5) -- (6.3,-.3) -- (2.9,-.3) -- (2.9,6.2)-- (.8,6.2);
    \node at (0.3,2.5) {\LARGE K'};
    \end{tikzpicture}
    \end{center}
\caption{The hostile configuration after \textbf{Step 2}} \label{fig5}
\end{figure}
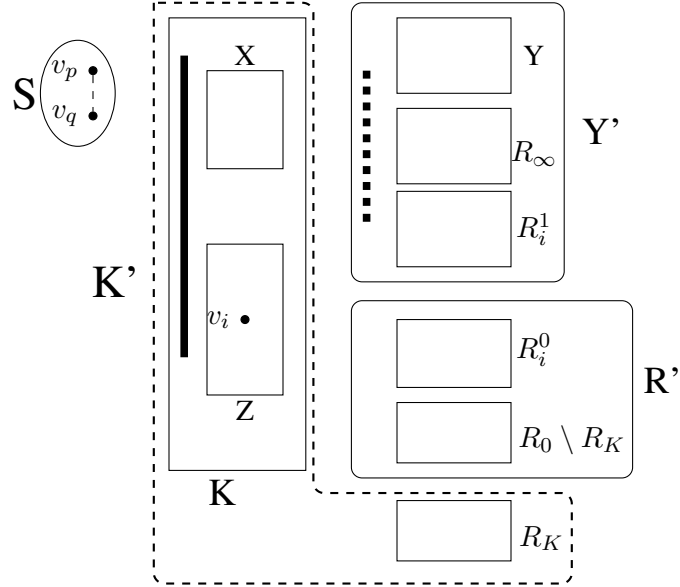

\begin{lemma}\label{lm:dream-2}
$G_m$ has a hostile configuration.
\end{lemma}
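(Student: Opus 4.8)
The plan is to verify the five conditions (a)--(e) of Definition~\ref{df:hostile} for the partition suggested by Figure~\ref{fig5}, namely
\begin{displaymath}
S=\{v_p,v_q\},\quad K':=K\cup R_K,\quad Y':=Y\cup R_\infty\cup R_i^1,\quad R':=R_0^\star\setminus R_K=(R_0\setminus R_K)\cup R_i^0.
\end{displaymath}
This is exactly parallel to the proof of Lemma~\ref{lm:dream-I} in Case~I, but now the roles are played by the ``twisted'' versions: $R_K$ is the analogue of $R_0^0$ (the part of $R_0^\star$ that saw an edge into $R_N^\star$ and hence, by Lemma~\ref{lm:observe}, became complete to all of $R_0^\star$), while $R_N^\star=R_\infty\cup R_i^1$ plays the role of the independent set $R_N$, using \eqref{eq:RN} for its independence.

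First I would check (a): $K$ is a clique by Lemma~\ref{th:L1}(iii); $[K,R_0]$ is complete by the definition of $R_0$ and $[K,R_i^0]$ is complete by \eqref{eq:R01}, so $[K,R_0^\star]$ is complete and in particular $[K,R_K]$ is complete; finally $R_K\subset R_0$ and $[R_K,R_0^\star]\subset E(G_m)$ by Lemma~\ref{lm:observe}, so $R_K$ is a clique and is complete to the rest of $K'$. For (b), $R'=R_0^\star\setminus R_K\subset R_0^\star$, so $[K,R']\subset [K,R_0^\star]$ is complete as just noted, and $[R_K,R']\subset [R_K,R_0^\star]\subset E(G_m)$ again by Lemma~\ref{lm:observe}; so $G_m[K';R']$ is complete bipartite. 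For (c), $Y$ is independent by Lemma~\ref{th:L1}(ii), $R_N^\star=R_\infty\cup R_i^1$ is independent by \eqref{eq:RN}, and $[Y,R]$ is empty by Lemma~\ref{th:L1}(iv) (the hinge-flips in Steps~1 and~2 only touched edges inside $R$, so $[Y,R]$ is still empty in $G_m$); hence $G_m[Y']$ is empty. For (d), $G_m[Y';R']$: the pair $[Y,R]$ is empty by Lemma~\ref{th:L1}(iv) as before, while $[R_N^\star,R']$ is empty precisely because $R'=R_0^\star\setminus R_K$ consists of those vertices of $R_0^\star$ with \emph{no} $G_m$-edge to $R_N^\star$ (this is the defining property of $R_K=\Gamma_{G_m}(R_N^\star)\cap R_0^\star$), so $G_m[Y';R']$ is empty bipartite. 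Finally (e) is immediate: $\Gamma_{G_m}(v_p)=\Gamma_{G_m}(v_q)=X\subset K\subset K'$, since no hinge-flip in the construction ever altered an edge incident to $v_p$ or $v_q$ (all twists act inside $R\cup\{v_i\}$ and $v_i\ne v_p,v_q$).

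The only genuinely delicate point, and the one I would be most careful about, is that all the structural facts inherited from $G$ (Lemma~\ref{th:L1}, Lemma~\ref{lm:RiRj_free}, and the emptiness of $[Y,R]$ and $[K^c,\cdot]$ relations) still hold in $G_m$: this needs the observation — which we have already recorded along the way — that both the upward twist of Step~1 and the downward twists of Step~2 only add or delete edges inside $R\cup\{v_i\}$, and in fact only between $R_0^\star$ and $R_N^\star$ (plus the Step~1 edges from $R_i^0$ to $v_i$). Consequently edges incident to $S$, edges inside $Y$, edges of the $[Y,R]$ pair, and the clique on $K$ are all untouched, and the partition $R_0^\star\uplus R_N^\star$ of the relevant part of $R$ is preserved. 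Once this bookkeeping is in place, conditions (a)--(e) follow as above, and the proof of Lemma~\ref{lm:dream-2} — hence of Lemma~\ref{lem:key} in Case~II, and thus of Theorem~\ref{th:alter} and Theorem~\ref{tm:main} — is complete.
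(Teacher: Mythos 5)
Your proposal is correct and follows essentially the same route as the paper: the same partition $S$, $K'=K\cup R_K$, $Y'=Y\cup R_N^\star$, $R'=R_0^\star\setminus R_K$, with conditions (a)--(e) verified from Lemma~\ref{th:L1}, Lemma~\ref{lm:observe}, \eqref{eq:R01}, \eqref{eq:RN}, and the definition of $R_K$. Your extra bookkeeping remark that all twists act only inside $R\cup\{v_i\}$ (so the inherited structure and the neighborhoods of $v_p,v_q$ are preserved in $G_m$) is a point the paper uses implicitly, and stating it explicitly is a welcome clarification rather than a deviation.
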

\begin{proof}
We have the  set $S=\{v_p,v_q\}$, and let
    \begin{displaymath}
    K'=K\cup R_K,\qquad
     Y'=Y\cup R^*_N, \qquad
     R'=R^*_0\setminus R_K.
    \end{displaymath}
We should check that conditions \ref{df:hostile}.(a)-(e)  hold for the partition     $(S,K',Y',R')$.

\begin{enumerate}[(a)]
\item {\bf $G_m[K']$  is a complete subgraph.} \quad
    Indeed, $K$ is a clique by Lemma \ref{th:L1}(iii). $R_K$ is a clique by Lemma \ref{eq:RK}. $[R_K,K]$ is complete because $R_K\subset R_0$ by Lemma \ref{eq:RK}, and $[R_0,K]$ is complete by the definition of $R_0$.
\item  {\bf $G_,[K';R']$ is a complete bipartite subgraph.} \quad
    Indeed, $[K,R^*_0]$ is complete by \eqref{eq:R01}.  Moreover, $R'\subset R_0^*$ and  $[R_K,R_0^*]\subset E(G_m)$ by Lemma \ref{eq:RK}.
\item  {\bf $G_m[Y']$ is an empty subgraph.} \quad  Indeed $[Y,Y\cup R]$ is empty in $G$
by Lemma  \ref{th:L1}(ii,iv) , and $R^*_N$ is empty by \eqref{eq:RN}.
\item {\bf $G_m[Y';R']$ is an empty bipartite subgraph.} \quad
    Indeed, $[Y,R]$ is empty by Lemma \ref{th:L1}. $[R_N^*,R']$ is empty by the definition of $R_K$.
\item {\bf  $S=\{v_p,v_q\}$ and  $\Gamma_{G_m}(v_p)\cup \Gamma_{G_m }(v_q) \subseteq K'$.} \quad
    Indeed, $\Gamma_{G_m}(v_p)\cup \Gamma_{G_m }(v_q)= \Gamma_{G}(v_p)\cup \Gamma_{G}(v_q) \subseteq K\subseteq K'$.
\end{enumerate}
Hence, the partition really witnesses a hostile configuration.
\end{proof}
This completes the proof of Lemma~\ref{lem:key} \qed

%%%%%%%%%%%%%%%%%%%
\subsection{An application}\label{sec:apply}
%%%%%%%%%%%%%%%%%%%

Theorem \ref{tm:main} provides a new, general, method to verify the $P$-stability of certain regions. As a consequence we obtain the following strengthening of the statement of condition (P3):
\begin{theorem}\label{th:SF+}
For any constant $ \varepsilon>0 $, the simple region $\mathbb D[{\varphi}_{\ds \! {G\!S\!+}}]$ defined by the inequality
$$
\varphi_{\ds \!{G\!S\!+}} \equiv \left (n \geq \frac{1}{2\ep^2}, 2\le c_2 \text{ and } 3 \le c_1 \le \sqrt{(1-\varepsilon)\Sigma}\right).
$$
is fully graphic and hence $P$-stable.
\end{theorem}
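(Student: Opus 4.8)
The plan is to deduce Theorem~\ref{th:SF+} from Theorem~\ref{tm:main} through its Corollary: a fully graphic simple region is $P$-stable, so it suffices to prove that $\mathbb D[\varphi_{\ds \! {G\!S\!+}}]$ is \emph{fully graphic}. Since that region is the union of the simple regions $\dds{n}{\Sigma}{c_1}{c_2}$ over all admissible tuples, and each such region contains \emph{every} degree sequence with the given parameters, it is enough to show that a single $\dds{n}{\Sigma}{c_1}{c_2}$ with $n\ge\tfrac1{2\ep^2}$, $c_2\ge2$ and $3\le c_1\le\sqrt{(1-\ep)\Sigma}$ is fully graphic. For this I invoke Lemma~\ref{lem:fully}: such a region fails to be fully graphic only if $Q=(c_1-c_2+1)^2-4c_2(n-1-c_1)>0$ \emph{and} $\Sigma$ lies in the interval $[\Sigma_0-\rho,\Sigma_0+\rho]$, where $\Sigma_0=nc_2+\tfrac{(c_1-c_2)(1+c_1+c_2)}2$ and $\rho=\tfrac{(c_1-c_2)(\sqrt Q+2)}2$. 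So I assume toward a contradiction that an admissible tuple has $Q>0$ and $\Sigma\le\Sigma_0+\rho$, and I aim for the $\Sigma$-free inequality $\Sigma_0+\rho<\tfrac{c_1^2}{1-\ep}$; since $c_1\le\sqrt{(1-\ep)\Sigma}$ rewrites as $\Sigma\ge\tfrac{c_1^2}{1-\ep}$, this contradicts $\Sigma\le\Sigma_0+\rho$ and finishes the proof.

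Next I record what the hypotheses give. From $\Sigma\le nc_1$ and $c_1^2\le(1-\ep)\Sigma$ one gets $c_1\le(1-\ep)n$; writing $m:=n-1-c_1$ this yields $m\ge\ep n-1\ge\tfrac1{2\ep}-1$ (using $n\ge\tfrac1{2\ep^2}$) and also $m\ge\delta c_1-1$, where $\delta:=\tfrac\ep{1-\ep}$. One checks separately that $m=0$ cannot occur for an admissible tuple, and that $\ep\ge\tfrac12$ only arises in the easy case below; hence we may assume $m\ge1$. Put $a:=c_1-c_2+1$ and $t:=4c_2m$, so that $Q=a^2-t$ with $8\le t<a^2$, in particular $a\ge3$. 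A direct computation (expanding $\Sigma_0$, $\rho$ and $nc_2$ in terms of $c_1,a,t$) collapses the target $\Sigma_0+\rho<\tfrac{c_1^2}{1-\ep}$ to
\[
 2\delta c_1^2\;>\;2c_1-2+3a-a^2+\phi(t),\qquad \phi(t):=\tfrac t2+(a-1)\sqrt{a^2-t},
\]
and $\phi$ is concave on $[0,a^2]$ with maximum $\phi(2a-1)=a^2-a+\tfrac12$.

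The easy case is $\delta c_1\ge2$. Then, bounding $\phi(t)\le\phi(2a-1)$ and using $a\le c_1-1$, the right‑hand side above is at most $2c_1+2a-\tfrac32\le4c_1-\tfrac72$, while the left‑hand side equals $2(\delta c_1)c_1\ge4c_1$; the inequality holds. (This covers every $\ep\ge\tfrac12$, since there $\delta\ge1$ and $c_1\ge3$.) The remaining case is $\delta c_1<2$, i.e. $c_1<\tfrac{2(1-\ep)}\ep$. Here I will instead exploit that $Q$ is \emph{small}: the bound $m\ge\tfrac1{2\ep}-1$ forces $t=4c_2m$ close to its ceiling $a^2$, so $Q=a^2-t=O(1/\ep)$ whereas $a^2=t+Q=\Omega(1/\ep^2)$; then $-a^2+\tfrac t2=-\tfrac{a^2}2-\tfrac Q2$ is very negative and outweighs the positive part $2c_1+3a+(a-1)\sqrt Q=O(\ep^{-3/2})$, making the right‑hand side negative for all sufficiently small $\ep$. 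For $\ep$ bounded below by an absolute constant, $c_1<\tfrac{2(1-\ep)}\ep$ is below an absolute constant, and $Q>0$ gives $m<\tfrac{a^2}{4c_2}\le\tfrac{(c_1-1)^2}8$, hence $n<c_1+1+\tfrac{(c_1-1)^2}8$ is also bounded; this confines $(n,c_1,c_2,m)$ to a finite list, and for each entry the displayed inequality is a concrete numerical lower bound on $\ep$ that is implied by $n\ge\tfrac1{2\ep^2}$, so it can be verified by hand.

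I expect the case $\delta c_1<2$ to be the main obstacle: one must keep explicit track of constants so that the "$-a^2/2$ dominates" argument closes uniformly as $\ep\to0$, and one must arrange the finite check for $\ep$ bounded away from $0$ to be painless — e.g. by splitting on $\ep\le\tfrac15$ versus $\ep>\tfrac15$ and noting that in the latter subcase $\delta>\tfrac14$ forces $c_1<8$ and then $n<15$. The rest — the reduction through Lemma~\ref{lem:fully}, the algebra producing the displayed inequality, and the concavity of $\phi$ — is routine.
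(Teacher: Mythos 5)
Your route is genuinely different from the paper's. The paper disposes of Theorem~\ref{th:SF+} in two lines: it quotes \cite[Theorem~3.4]{fully}, which states that non-full-graphicity of the region $\mD[2\le c_2,\,3\le c_1\le\sqrt{(1-\ep)\Sigma}]$ forces $n<\tfrac{1}{8(1-\sqrt{1-\ep})^2}\le\tfrac{1}{2\ep^2}$, and then invokes Theorem~\ref{tm:main} for $P$-stability. You instead re-derive the quantitative ``fully graphic'' statement from scratch out of Lemma~\ref{lem:fully}, which is a legitimate and self-contained alternative; I checked your reduction, the algebraic collapse of $\Sigma_0+\rho<\tfrac{c_1^2}{1-\ep}$ to $2\delta c_1^2>2c_1-2+3a-a^2+\phi(t)$, the concavity of $\phi$ with maximum $a^2-a+\tfrac12$ at $t=2a-1$, the exclusion of $m=0$, and the easy case $\delta c_1\ge 2$ (using $a\le c_1-1$), and these are all correct.

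The hard case $\delta c_1<2$, however, contains a genuine gap as written. The bound you cite, $m\ge\tfrac{1}{2\ep}-1$, gives only $a^2>t\ge\tfrac4\ep-8$, i.e.\ $a^2=\Omega(1/\ep)$, and it gives \emph{no} upper bound on $Q=a^2-t$ at all: nothing so far prevents $a^2\approx 4/\ep^2$ while $t\approx 4/\ep$, in which case $Q=\Theta(1/\ep^2)$ and $\phi(t)$ can sit near its maximum, so the claimed negativity of the right-hand side does not follow. What you actually need (and what makes your stated conclusions true) is $m=n-1-c_1\ge\tfrac{1}{2\ep^2}-1-c_1$ combined with $c_1<\tfrac{2(1-\ep)}{\ep}$; then $t\ge\tfrac{4}{\ep^2}-\tfrac{16}{\ep}+8$ while $a^2<\tfrac{4}{\ep^2}$, which yields $Q=O(1/\ep)$ and $a^2=\Omega(1/\ep^2)$, and the ``$-a^2/2$ dominates'' argument does close. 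Even after this fix, your bookkeeping for the remaining range is too optimistic: with the crude bounds above the domination argument only works for roughly $\ep\lesssim 0.08$, not up to $\ep=\tfrac15$, so the finite verification must cover more than ``$c_1<8$, $n<15$''; for instance the hard case is nonempty at $(n,c_1,c_2)=(6,4,2)$ (where the check reduces to $32\delta>12$ at $\ep=1/\sqrt{12}$, which holds with margin $12.99$ vs.\ $12$) and at $(23,11,2)$, $(24,11,2)$. So the plan is completable, but the hard case needs the corrected bound on $m$ and a genuinely carried-out (if short) finite check before it constitutes a proof; alternatively, simply cite \cite[Theorem~3.4]{fully} as the paper does.
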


\begin{proof}
In \cite[Theorem 3.4]{fully} it was proved that if
\[
D\in\mD[2\leq c_2, 3\leq c_1\leq \sqrt{(1-\varepsilon)\Sigma}]
\]
is not fully graphic, then
\begin{equation}\label{eq:main-gs2}
n<  \frac{1}{8(1-\sqrt{1-\varepsilon})^2} \leq \frac{1}{2 \varepsilon^2}
\end{equation}
So for all $\varepsilon$ there exists an $n_\varepsilon$ that for all $n> n_\varepsilon$ the region $\mD[{\varphi}_{\ds \! {G\!S\!+}}]$ is fully graphic.

%It is interesting to mention that the condition gives surprisingly low values for $n_\varepsilon.$ For example $n_{8/9}=1$ and $n_{2/3}=3.$
\end{proof}

%%%%%%%%%%%%%%%%%%%%%%%%%%%%%
\section{Almost all not fully graphic simple regions are also not $P$-stable}\label{sec:almost}
%%%%%%%%%%%%%%%%%%%%%%%

We begin by recalling a useful characterization of graphicality from~\cite{fully} and use it to prove Lemma~\ref{lem:fully}.
For the simple degree sequence region $\dds{n}{\Sigma}{c_1}{c_2}$  we define  the degree sequence $\lb(n,\Sigma,c_1,c_2)$ as follows:
\begin{equation}\label{eq:LEG}
\lb(n,\Sigma,c_1,c_2)= \underbrace{c_1,\ldots,c_1}_{\fal}, a, \underbrace{c_2, \ldots, c_2}_{(n-1)-\fal},
\end{equation}
where
\begin{equation}\label{eq:alpha}
\alpha= \frac{\Sigma - n\cdot c_2}{c_1-c_2}
\quad\text{and}\quad
a = c_2 + (c_1-c_2)\cdot \{\alpha\}.
\end{equation}

\noindent The following simple statement characterizes the fully graphic simple degree sequence regions in terms of its associated $\lb$ sequence:
\begin{lemma}[{\cite[Theorem 2.5]{fully}}]\label{lem:LEG}
The simple region $\dds{n}{\Sigma}{c_1}{c_2}$ is fully graphic if and only if the degree sequence $\lb(n,\Sigma,c_1,c_2)$ \ is graphic. \qed
\end{lemma}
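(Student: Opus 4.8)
The statement to prove is Lemma~\ref{lem:LEG}: $\dds{n}{\Sigma}{c_1}{c_2}$ is fully graphic iff $\lb(n,\Sigma,c_1,c_2)$ is graphic. The plan is to show that $\lb$ is, in a suitable sense, the \emph{hardest} degree sequence in the region to realize, so that its graphicality forces graphicality of every member.

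\textbf{The forward direction} is immediate: if the whole region is fully graphic, then since $\lb(n,\Sigma,c_1,c_2)$ manifestly lies in $\dds{n}{\Sigma}{c_1}{c_2}$ — its entries lie between $c_2$ and $c_1$ (here $a \in [c_2, c_1]$ because $\{\alpha\}\in[0,1)$), it has length $n$, and one checks by the defining choice of $\alpha$ and $a$ that the entries sum to exactly $\Sigma$ — it must be graphic.

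\textbf{The reverse direction} is the substance. Suppose $\lb := \lb(n,\Sigma,c_1,c_2)$ is graphic; let $D \in \dds{n}{\Sigma}{c_1}{c_2}$ be arbitrary, written in non-increasing order. I would verify graphicality of $D$ via the Erdős–Gallai inequalities: for every $k$,
\[
\sum_{i=1}^{k} d_i \;\le\; k(k-1) + \sum_{i=k+1}^{n} \min(d_i, k).
\]
The key claim is that $\lb$ \emph{majorizes} every such $D$ in the dominance order on sequences with the same sum $\Sigma$ and the same max/min constraints — i.e. the partial sums $\sum_{i=1}^k (\lb)_i$ are at least $\sum_{i=1}^k d_i$ for all $k$. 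This is because $\lb$ is the extreme point of the polytope $\{x : c_2 \le x_i \le c_1,\ \sum x_i = \Sigma\}$ obtained by pushing mass to the front: among all sequences with those constraints and that sum, the one maximizing every prefix sum simultaneously is exactly "$c_1$ repeated $\lfloor\alpha\rfloor$ times, then the fractional entry $a$, then $c_2$." Once majorization is established, one invokes the standard fact (a short lemma, essentially due to Ruch–Gutman / a consequence of the Erdős–Gallai characterization being monotone under dominance among sequences with bounded entries) that if $\lb$ is graphic and $\lb$ dominates $D$ with $\sum \lb = \sum D$, then $D$ is graphic. Concretely: the Erdős–Gallai left-hand side $\sum_{i\le k} d_i$ only decreases when passing from $\lb$ to $D$, while the right-hand side $k(k-1)+\sum_{i>k}\min(d_i,k)$ does \emph{not} decrease — here one uses that the entries of both sequences are bounded by $c_1$ and that the truncation-by-$k$ function is concave, so that a dominance-decreasing rearrangement of the tail can only help. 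This last monotonicity of the EG right-hand side under dominance is the only place a genuine argument (rather than bookkeeping) is needed.

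\textbf{The main obstacle} is precisely this monotonicity step: showing that replacing $\lb$ by a dominated $D$ (same sum) cannot violate any Erdős–Gallai inequality. The cleanest route is to reduce a general dominated $D$ to $\lb$ by a finite sequence of elementary "unit transfers" — move one unit of degree from a later coordinate $j$ to an earlier coordinate $i$ (respecting the bounds $c_2 \le d_\ell \le c_1$) — since any two sequences comparable in dominance with equal sum are connected by such moves, and then check that a single unit transfer preserves the Erdős–Gallai inequalities. For a single transfer $i<j$, the left side $\sum_{\ell\le k} d_\ell$ is unchanged except for $k\in[i,j-1]$, where it increases by $1$; one must check the right side increases by at least $1$ there, which follows because raising $d_i$ affects only left sides (for $k\ge i$ it is on the left), while lowering $d_j$ changes $\min(d_j,k)$ by at most $1$ and for exactly the relevant range of $k$ the term $k(k-1)$ or the count of tail vertices with degree $\ge k$ absorbs the change. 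I would isolate this as a one-line sub-lemma and cite~\cite{fully} for the precise form already used there, since the statement is labelled as coming from \cite[Theorem 2.5]{fully} and I would not reprove it from scratch but rather indicate the dominance/Erdős–Gallai mechanism and defer the elementary verification.
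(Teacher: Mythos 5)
This lemma is not proved in the paper at all: it is imported verbatim from \cite[Theorem 2.5]{fully} (hence the \qed with no argument), so the comparison can only be with the natural proof you reconstruct. Your skeleton is the right one. The forward direction is indeed immediate because $\lb(n,\Sigma,c_1,c_2)$ has integer entries in $[c_2,c_1]$ summing to $\Sigma$, so it lies in $\dds{n}{\Sigma}{c_1}{c_2}$; and for the converse, $\lb$ achieves the maximal possible prefix sums $\min\bigl(kc_1,\ \Sigma-(n-k)c_2\bigr)$ among non-increasing sequences of the region, hence majorizes every $D\in\dds{n}{\Sigma}{c_1}{c_2}$, and graphicality at a fixed even sum is downward closed in the dominance order (Ruch--Gutman), which finishes the proof.

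However, the mechanism you sketch for that one substantive step is wrong as written. To conclude that $D$ is graphic from the graphicality of $\lb$, you must follow unit transfers from $\lb$ \emph{down} to $D$, i.e.\ moves taking a unit from an earlier (larger) coordinate to a later (smaller) one; your verification runs in the opposite direction, and the specific claim --- that when a unit is moved from coordinate $j$ to coordinate $i<j$ the Erdős–Gallai right-hand side at levels $k\in[i,j-1]$ increases by at least $1$ --- is false: for such $k$ the coordinate $i$ sits on the left-hand side, so the only affected term on the right is $\min(d_j,k)$, which stays the same or drops by one. (This is exactly why graphicality is \emph{not} upward closed under dominance.) There is also the minor point that after a transfer the sequence must be re-sorted before Erdős–Gallai can be applied. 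The standard repair is to prove the sub-lemma on realizations rather than on the inequalities: if $G'$ realizes the dominating sequence and $d_i\ge d_j+2$, then $v_i$ has a neighbor $u\notin\{v_j\}\cup\Gamma(v_j)$, and replacing the edge $(v_i,u)$ by $(v_j,u)$ realizes the transferred sequence. Since you explicitly defer this step to the literature, the overall argument stands once your EG bookkeeping is replaced by that (or another correct) proof of the dominance monotonicity; as sketched, that step would not go through.
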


\begin{proof}[Proof of Lemma~\ref{lem:fully}]

Assume that a simple degree sequence region $\mD(n,\Sigma,c_1,c_2)$ is not fully graphic.
Then Lemma~\ref{lem:LEG} implies that, in particular, the
sequence $\lb(n,\Sigma,c_1,c_2)$ is not graphic.

Tripathi and Vijay showed in \cite{TV03} that for this particular type of sequence non-graphicality implies $\alpha \geq c_2$ and that the Erdős–Gallai inequality fails for either $k=\fal$ or $k = \ceal$. That is,
\begin{align*}
c_1 k &> k(k-1) + \min(c_2,k)(n-k-1) + \min(a, k) &\geq& k(k-1)+c_2(n-k)  \mbox{ for } k = \fal \mbox{ or}\\
c_1 k &\geq a + c_1 (k-1)> k(k-1) + \min(c_2,k)(n-k) &\geq& k(k-1) + c_2(n-k) \mbox{ for } k = \ceal.
\end{align*}
Therefore, we have for $k=\fal$ or $k=\ceal$ that
\begin{equation}\label{eq:k=a3}
    c_1 k > k(k-1) + c_2(n-k),
\end{equation}
or, equivalently, that
\begin{equation}\label{eq:alpha2}
0 > k^2 - (c_2 + c_1 + 1)k + nc_2.
\end{equation}
A simple algebraic manipulation shows that the discriminant of the right-hand side, $ (c_2+c_1+1)^2-4nc_2$, is equal to $Q$ from \eqref{eq:fully-iff}. Since the discriminant has to be positive, this implies \eqref{eq:fully-iff}. Furthermore, $\fal$ or $\ceal$ has to lie between the two roots of the equation
\[
k^2 - (c_2 + c_1 + 1)k + nc_2 = 0,
\]
which implies \eqref{eq:fully-iff2} by the identity \eqref{eq:alpha}.
\end{proof}

The proof strategy of Theorem~\ref{thm:sigmabound} is rather straightforward. There is a known construction that yields a degree sequence with an exponential boundary: the degree sequence of the Tyskevich product of a half-graph and a bipartite graph. Our aim is then to show that there is a such a sequence in $D(n,\Sigma,c_2,c_1)$ whenever $\Sigma$ satisfies \eqref{eq:sigma_abs_bound}.

%%%%%%%%%%5
\subsection{Half-graphs and split degree sequences}
%%%%%%%%%%

The unique realization of the following degree sequence is called the \textbf{half-graph}. (The name was originally coined by Paul Erd\H{o}s and A. Hajnal.)
$$
\mathbf{h}_{r/2}=(1,2,\dots,r/2,r/2,\dots, r-1).
$$
Its relevance for us comes from the following estimate from~\cite{JMS92}
\begin{equation}\label{eq:fully-iff3}
\pppp {\mathbf{h}_{r/2}}= \Theta\left(3^{r/2}\right)%\Theta \left( {\left(\frac{3+\sqrt{5}}{2}\right)}^{(r/2)} \right).
\end{equation}
We will consider a graph $G$ where $V(G)$ can be partitioned into three disjoint subsets $X \uplus Y \uplus R$ such that:
\begin{itemize}
    \item $G[X]$ is a complete subgraph,
    \item $G[Y]$ is an empty subgraph,
    \item $G[X,R]$ is a complete bipartite graph,
    \item $G[Y,R]$ is an empty bipartite graph,
    \item The degrees sequence of $G[R]$ is $(\mathbf{h}_{r/2}).$
\end{itemize}
According to the proof of  \cite[Theorem 8.2]{JMS92},  the degree sequence of this graph has boundary quotient at least $\pppp {\mathbf{h}_{r/2}}$.

Let us fix the size of $R$ to an even value \(r=|R|\). For any \(c_2 \leq x \leq c_1-r+1\) we can consider a partition with \(x=|X|\) and \(n-r-x=y=|Y|\). We are going to determine, what values of $\Sigma$ can a graph have that is constructed in the above manner, and with the given sizes for $X,Y,R$. Everything in the graph is fixed except the bipartite portion between the sets $X$ and $Y$, so
\[\Sigma = r^2/2 + 2xr+ x(x-1)+2|E(X,Y)|.\] For that subgraph the only constraints are that the nodes in $Y$ need to have degree $\geq c_2$ while nodes in $X$ must have degree $\leq c_1-x-r+1$.

\begin{lemma}
    For any $e$ satisfying
\begin{align}
    c_2 (n-x-r) \leq e \leq x(c_1-x-r+1)\label{eq:a_cons}
\end{align}
there is a bipartite graph $G'(X,Y)$ with $e$ edges that satisfies the degree constraints described above.
\end{lemma}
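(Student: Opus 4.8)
The plan is to realize the bipartite graph $G'(X,Y)$ between the fixed sets $X$ (with $x=|X|$) and $Y$ (with $y=n-x-r=|Y|$) as the realization of a bipartite degree sequence constructed to be as close to regular as possible on each side, and then argue graphicality via the Gale--Ryser theorem. Concretely, given the target edge count $e$ with $c_2(n-x-r)\le e\le x(c_1-x-r+1)$, I want each vertex of $Y$ to have degree between $c_2$ and $x$ (so that its total degree in $G$, which is $r$ plus its degree into $X$, lands in $[c_2+r, x+r]\subseteq[c_2,c_1]$ once we also recall $r\le c_1-x+1$), and each vertex of $X$ to have degree between $0$ and $c_1-x-r+1$ into $Y$ (so that its total degree in $G$, which is $(x-1)+r$ plus its degree into $Y$, is at most $c_1$ and at least $c_2$ since $x-1+r\ge c_2$ follows from $x\ge c_2$). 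The natural choice is to distribute the $e$ edges so that the $Y$-side degrees are all either $\lfloor e/y\rfloor$ or $\lceil e/y\rceil$ and, symmetrically, to check the $X$-side constraint holds for the resulting sequence.

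The key steps, in order, would be: first, verify that the ``almost-regular'' $Y$-side degree sequence $\mathbf{b}=(b_1,\dots,b_y)$ with each $b_j\in\{\lfloor e/y\rfloor,\lceil e/y\rceil\}$ and $\sum b_j=e$ satisfies $c_2\le b_j\le x$ for all $j$ --- the lower bound from $e\ge c_2 y$ and the upper bound from $e\le x(c_1-x-r+1)\le x\cdot x = x^2$ would need $e/y\le x$, which holds provided $y\ge c_1-x-r+1$, i.e. $n-x-r\ge c_1-x-r+1$, i.e. $n\ge c_1+1$, which is part of the standing hypotheses. Second, invoke Gale--Ryser: a bipartite degree sequence with $Y$-side degrees $b_1\ge\cdots\ge b_y$ and $X$-side ``capacity'' $x$ vertices is realizable with each $X$-side vertex of degree at most $c_1-x-r+1$ iff a certain dominance condition holds; since the $b_j$ are nearly equal, the partial sums $\sum_{j\le k} b_j$ are as small as possible for each $k$, which makes the dominance condition easiest to satisfy, and the total $e\le x(c_1-x-r+1)$ is exactly the capacity bound. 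Third, conclude that the realization $G'(X,Y)$ exists and, assembling it with the fixed clique on $X$, empty graph on $Y$, complete bipartite $[X,R]$, empty $[Y,R]$, and the half-graph on $R$, gives a graph whose degree sequence lies in $\mD(n,\Sigma,c_1,c_2)$ with $\Sigma = r^2/2 + 2xr + x(x-1) + 2e$.

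I expect the main obstacle to be the bookkeeping that the degree constraints $c_2\le \deg_G(v)\le c_1$ hold simultaneously for \emph{both} sides after adding back the contributions from $R$ and from the clique/empty structure --- in particular making sure the upper bound $c_1-x-r+1\ge 0$ is genuinely available (which requires $x\le c_1-r+1$, exactly the hypothesis $c_2\le x\le c_1-r+1$), and that the lower bounds are not violated for $X$-vertices that happen to get degree $0$ into $Y$ (here one needs $x-1+r\ge c_2$, which follows from $x\ge c_2\ge 1$ plus $r\ge 0$, or more carefully from $x\ge c_2$ directly). A secondary subtlety is whether to phrase the realizability argument via Gale--Ryser or simply by a greedy/flow argument: the cleanest route is probably to observe that the bipartite realization problem with per-vertex upper bounds on both sides is a transportation/flow feasibility question, and that the single cut condition one must check reduces precisely to the two inequalities in \eqref{eq:a_cons} together with $0\le e\le xy$, the latter being automatic. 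I would present the Gale--Ryser version since it is the most standard, and relegate the inequality checks to a short paragraph rather than a long computation.
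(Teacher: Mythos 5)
Your proposal is correct and takes essentially the same route as the paper: the paper also picks a bi-degree sequence using only two adjacent values on each side with sum $e$ and observes that such a near-regular pair is always realizable, which is exactly the Gale--Ryser/flow check you spell out. Two small slips in your commentary do not affect the argument: $G[Y,R]$ is empty, so a $Y$-vertex's total degree is just its degree into $X$ (no ``$+r$''), and the inequality $x(c_1-x-r+1)\le x^2$ need not hold in general --- but the bound you actually need, $e/y\le x$, follows from $y\ge c_1-x-r+1$ (i.e.\ $n\ge c_1+1$) just as you say.
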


\begin{proof}
    Pick a bi-degree sequence that takes only 2 adjacent values on the $X$ side as well as on the $Y$ side, with the sum being equal to $e$ on both sides. This sequence clearly satisfies the lower and upper constraints, and it is easy to verify that there is a bipartite graph with this given bi-degree sequence.
\end{proof}

\begin{definition}\label{eq:Ix}
Let $I^x$ denote the interval given by
\begin{align*}
I^x &= [I^x_0, I^x_1] \; \mbox{ where}\\
I^x_0 &:=r^2/2 + x(x+2r-1) + 2c_2(n-x-r) \; \mbox{ and}\\
I^x_1 &:= r^2/2 + x(x+2r-1) + 2x(c_1 - x-r+1).
\end{align*}
\end{definition}

\begin{corollary}\label{cor:sigmaint} If $\Sigma \in I^x$ for some $c_2 \leq x \leq c_1-r+1$ then $D(n,\Sigma,c_2,c_1)$ is not $P$-stable.
\end{corollary}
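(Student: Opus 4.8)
The plan is to produce, for the given $\Sigma$, an explicit degree sequence $D\in\mD(n,\Sigma,c_2,c_1)$ realized by a graph of the ``half-graph over a split base'' type introduced just above the statement, and then to read off its boundary quotient from the $\Theta(3^{r/2})$ estimate of \cite[Theorem 8.2]{JMS92} together with \eqref{eq:fully-iff3}.

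First I would translate the hypothesis $\Sigma\in I^x$ into the existence of a usable edge count. Since $r$ is even, a one-line parity check shows $r^2/2+x(x+2r-1)$ is even; as $\Sigma$ is also even (being a degree sum), the integer $e:=\tfrac12\bigl(\Sigma-r^2/2-x(x+2r-1)\bigr)$ is well defined, and by the very definition of $I^x$ the two inequalities $I^x_0\le\Sigma\le I^x_1$ are equivalent to $c_2(n-x-r)\le e\le x(c_1-x-r+1)$, that is, to \eqref{eq:a_cons}. Then the lemma producing $G'(X,Y)$ supplies a bipartite graph on $X\uplus Y$, with $|X|=x$ and $|Y|=n-x-r$, having exactly $e$ edges, in which every vertex of $Y$ has degree at least $c_2$ and every vertex of $X$ has degree at most $c_1-x-r+1$. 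Gluing this bipartite graph onto a clique on $X$, an independent set on $Y$, a complete bipartite graph between $X$ and $R$, an empty bipartite graph between $Y$ and $R$, and a copy of $\mathbf{h}_{r/2}$ on $R$ yields a graph $G$ on $[n]$ whose degree sum equals $r^2/2+x(x+2r-1)+2e=\Sigma$.

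Second I would check that the degree sequence $D$ of $G$ lies in $\mD(n,\Sigma,c_2,c_1)$, i.e.\ that $c_2\le\deg_G(v)\le c_1$ for all $v$. A vertex $v\in X$ has $\deg_G(v)=(x-1)+r+\deg_{G'}(v)$, so the upper bound on $\deg_{G'}(v)$ forces $\deg_G(v)\le c_1$, while $\deg_G(v)\ge x+r-1\ge c_2$ since $x\ge c_2$. A vertex $v\in Y$ has $\deg_G(v)=\deg_{G'}(v)\in[c_2,x]\subseteq[c_2,c_1]$ because $x\le c_1-r+1\le c_1$. A vertex $v\in R$ has $\deg_G(v)=x+d$ with $d\in[1,r-1]$ its degree in $\mathbf{h}_{r/2}$, so $\deg_G(v)\le x+r-1\le c_1$ — this is exactly where the hypothesis $x\le c_1-r+1$ enters — and $\deg_G(v)\ge x+1\ge c_2$. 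Finally, by the computation in the proof of \cite[Theorem 8.2]{JMS92} this $G$ has $\pppp D\ge\pppp{\mathbf{h}_{r/2}}=\Theta(3^{r/2})$ via \eqref{eq:fully-iff3}; since $3^{r/2}$ eventually exceeds every fixed polynomial once $r$ is allowed to grow with $n$ (e.g.\ $r=\log^2 n$, as in the Remark after Theorem~\ref{thm:sigmabound}), the region contains a sequence violating $p$-stability for every polynomial $p$, i.e.\ it is not $P$-stable.

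The argument is essentially bookkeeping: the substantive work has already been done in setting up the half-graph bound, the $G'(X,Y)$ lemma, and the interval $I^x$. The one place where the hypotheses bite non-trivially is the maximum-degree check for the vertices of $R$, where the cap $x\le c_1-r+1$ on $|X|$ is precisely (and tightly) what is needed; this is also why the union $\bigcup_x I^x$ of attainable $\Sigma$'s is limited. A secondary point to get right is the integrality/parity step identifying $\Sigma\in I^x$ with a legal edge count $e$, which uses both that $r$ is even and that $\Sigma$ is even.
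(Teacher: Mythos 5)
Your proposal is correct and follows exactly the route the paper intends: the corollary is stated as an immediate consequence of the preceding construction (clique on $X$, independent $Y$, complete $X$--$R$, empty $Y$--$R$, half-graph on $R$, plus the bipartite lemma giving the right edge count $e$), together with the $\pppp{\mathbf{h}_{r/2}}=\Theta(3^{r/2})$ bound from \cite[Theorem 8.2]{JMS92}. Your explicit parity check for $e$ and the degree verifications (in particular where $x\le c_1-r+1$ and $x\ge c_2$ are used) just spell out the bookkeeping the paper leaves implicit.
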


From now on our goal is to show that $\cup I^x$ covers the interval described in \eqref{eq:sigma_abs_bound}. Upon closer inspection, it is evident that both $I^x_0$ and $I^x_1$ are monotone increasing in $x$. The key to showing that their union covers a large interval, then, is to show that they overlap for consecutive values of $x$.

\begin{claim}
    $I^x \cap I^{x+1} \neq \emptyset$ if and only if $I^{x+1}_0 \leq I^{x}_1$, that is when
    \[ (x+1)(x+2r)+2c_2(n-x-1-r) \leq x(x+2r-1) + 2x(c_1-x-r-+1),\]
    or, equivalently,
    \begin{equation}\label{eq:newxbound}
        x^2-x(c_1+c_2-r)+r + c_2(n-1-r) \leq 0
    \end{equation}
\end{claim}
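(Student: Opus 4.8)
The claim is elementary: it rewrites the condition $I^x\cap I^{x+1}\neq\emptyset$ into the inequality~\eqref{eq:newxbound}. The plan is to carry out this reduction in two small steps, both of which are purely algebraic.

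First I would justify the equivalence ``$I^x\cap I^{x+1}\neq\emptyset$ iff $I^{x+1}_0\leq I^x_1$''. The intervals $I^x=[I^x_0,I^x_1]$ and $I^{x+1}=[I^{x+1}_0,I^{x+1}_1]$ have, by the remark preceding the claim, both endpoints monotone increasing in $x$, so $I^x_0<I^{x+1}_0$ and $I^x_1<I^{x+1}_1$. Two intervals with $I^x_0\le I^{x+1}_0$ overlap precisely when the left endpoint of the second does not exceed the right endpoint of the first, i.e.\ $I^{x+1}_0\le I^x_1$; this is just the standard criterion for overlap of intervals positioned in this order. (One should note in passing that each $I^x$ is nonempty, which follows from~\eqref{eq:a_cons} being a nonempty range, equivalently from $c_2\le x\le c_1-r+1$; this guarantees $I^x_0\le I^x_1$ so the overlap criterion applies.)

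Second I would substitute the definitions from Definition~\ref{eq:Ix}. Writing out $I^{x+1}_0 = r^2/2 + (x+1)(x+1+2r-1) + 2c_2(n-(x+1)-r)$ and $I^x_1 = r^2/2 + x(x+2r-1) + 2x(c_1-x-r+1)$, the common term $r^2/2$ cancels, and the inequality $I^{x+1}_0\le I^x_1$ becomes exactly the displayed line
\[
(x+1)(x+2r)+2c_2(n-x-1-r)\ \le\ x(x+2r-1) + 2x(c_1-x-r+1).
\]
Then I would expand both sides: the left side is $x^2+(2r+1)x+2r+2c_2(n-1-r)-2c_2 x$, and the right side is $x^2+(2r-1)x+2x(c_1-x-r+1)=x^2+(2r-1)x+2c_1 x-2x^2-2rx+2x = -x^2+2x+2c_1 x-x$, and collecting everything on one side and dividing by $2$ yields $x^2-x(c_1+c_2-r)+r+c_2(n-1-r)\le 0$, which is~\eqref{eq:newxbound}.

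There is essentially no obstacle here — the only thing to be careful about is the bookkeeping in the expansion (signs and the factor of $2$), and the one genuinely mathematical point worth stating explicitly is why the simple endpoint-comparison $I^{x+1}_0\le I^x_1$ captures overlap, which relies on the monotonicity of the endpoints already observed. I would present the proof as: (1) recall monotonicity and nonemptiness of the $I^x$, deduce the overlap criterion; (2) plug in and simplify to reach~\eqref{eq:newxbound}.
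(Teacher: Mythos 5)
Your proof is correct and is essentially the argument the paper intends (the paper states this Claim without a separate proof): the standard overlap criterion for two intervals whose endpoints are monotone in $x$, followed by substituting Definition~\ref{eq:Ix} and expanding, and your algebra does reduce $I^{x+1}_0\le I^x_1$ to \eqref{eq:newxbound} after dividing by $2$. One parenthetical slip: nonemptiness of $I^x$ is equivalent to the range \eqref{eq:a_cons} being nonempty, i.e.\ $c_2(n-x-r)\le x(c_1-x-r+1)$, which is \emph{not} equivalent to $c_2\le x\le c_1-r+1$; this is harmless here, since for $c_2\le x\le c_1$ the inequality \eqref{eq:newxbound} itself implies that \eqref{eq:a_cons} has a nonempty range for both $x$ and $x+1$, which is all the ``if'' direction of the overlap criterion requires.
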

\noindent Let $\amin$ and $\amax$ denote the smallest and largest integers satisfying \eqref{eq:newxbound} and let
\begin{align*}
    \Smin &= I^{\amin}_0 = \frac{r^2}{2} +  \amin \cdot (\amin +2r-1)+ 2 c_2 (n-\amin - r) \\
\Smax &= I^{\amax}_1 = \frac{r^2}{2} +  \amax \cdot (\amax +2r-1)+ 2 \amax (c_1-\amax - r+1).
\end{align*}

\begin{corollary}
The simple region $D(n,\Sigma,c_2,c_1)$ is not $P$-stable for
\[ \Smin \leq \Sigma \leq \Smax \]
\end{corollary}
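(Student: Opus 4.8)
The plan is to chain together the corollaries that have already been established. By Corollary~\ref{cor:sigmaint}, each interval $I^x$ with $c_2 \leq x \leq c_1-r+1$ consists of values of $\Sigma$ for which $D(n,\Sigma,c_2,c_1)$ is not $P$-stable, so it suffices to show that $\bigcup_x I^x \supseteq [\Smin, \Smax]$, where the union ranges over the integers $x$ with $\amin \leq x \leq \amax$. First I would observe that every such $x$ lies in the admissible range $[c_2, c_1-r+1]$: the inequality \eqref{eq:newxbound} defining $\amin$ and $\amax$ has roots that can be checked to lie between $c_2$ and $c_1-r+1$ (this uses $Q>0$, i.e. that the discriminant is positive, together with the hypothesis that $r$ is not too large relative to $c_1-c_2$), so Corollary~\ref{cor:sigmaint} applies to each $I^x$ in the range.

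Next I would use the monotonicity of $I^x_0$ and $I^x_1$ in $x$, noted just before the Claim, together with the Claim itself: the Claim shows that $I^x \cap I^{x+1} \neq \emptyset$ precisely when $x$ satisfies \eqref{eq:newxbound}. Since \eqref{eq:newxbound} is a downward parabola in $x$, the set of integers satisfying it is exactly the interval $\{\amin, \amin+1, \dots, \amax\}$ — in particular it is a contiguous block, so for every $x$ with $\amin \leq x < \amax$ we have $I^x \cap I^{x+1} \neq \emptyset$. A contiguous family of intervals $I^{\amin}, I^{\amin+1}, \dots, I^{\amax}$ in which consecutive members overlap has a connected union, namely the single interval $[\min_x \min I^x, \max_x \max I^x]$. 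By monotonicity of $I^x_0$ the left endpoint of the union is $I^{\amin}_0 = \Smin$, and by monotonicity of $I^x_1$ the right endpoint is $I^{\amax}_1 = \Smax$. Hence $\bigcup_{x=\amin}^{\amax} I^x = [\Smin, \Smax]$, and every $\Sigma$ in this range falls into some $I^x$ with $x$ in the admissible window, so $D(n,\Sigma,c_2,c_1)$ is not $P$-stable by Corollary~\ref{cor:sigmaint}.

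The one point requiring a little care — and the place I would expect to spend the most effort — is verifying that $\amin$ and $\amax$ genuinely land inside $[c_2, c_1-r+1]$, so that the $I^x$ we invoke really are of the form covered by Corollary~\ref{cor:sigmaint}; this is a routine but not entirely trivial estimate on the roots of \eqref{eq:newxbound}. Everything else is a soft argument about overlapping intervals. One should also confirm that the range is nonempty, i.e. that \eqref{eq:newxbound} has at least one integer solution, which again follows from $Q>0$ and the bound on $r$; if the excerpt's standing hypotheses already guarantee this it can simply be cited, otherwise it is an immediate consequence of the discriminant computation.
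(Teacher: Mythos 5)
Your argument is correct and is essentially the paper's own (implicit) reasoning: the paper states this corollary without further proof, relying on exactly this chaining of Corollary~\ref{cor:sigmaint}, the overlap Claim, and the monotonicity of $I^x_0$ and $I^x_1$ to conclude that $\bigcup_{x=\amin}^{\amax} I^x \supseteq [\Smin,\Smax]$. The one extra check you flag, that $\amin$ and $\amax$ land in the admissible range $[c_2,\,c_1-r+1]$, is indeed glossed over by the paper but does hold (the quadratic in \eqref{eq:newxbound} is positive at both $x=c_2$ and $x=c_1-r+1$, and its vertex lies between them once $c_1-c_2\geq r$, which the standing hypotheses force in any non-vacuous case), so your proof goes through as written; note only the small slip that \eqref{eq:newxbound} is an upward-opening parabola bounded above by $0$, not a downward one, though the conclusion that its integer solutions form a contiguous block is unaffected.
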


%%%%%%%%%%%
\subsection{Bounding the interval}
%%%%%%%%%%%

\begin{proof}[Proof of Theorem~\ref{thm:sigmabound}] All we need to establish that the interval defined by \eqref{eq:sigma_abs_bound} is contained in $[\Smin,\Smax]$. Or rather, we need to determine a value of $\ep$ that allows us to establish this, and then we need to prove that this $\ep \leq \frac{3(r+3)}{\beta(c_1-c_2)}$.

Using that $\amin$ and $\amax$ satisfy \eqref{eq:newxbound} we can further bound the endpoints of the $[\Smin,$ $\Smax]$ interval:
\begin{align}\nonumber
\Smin = I^{\amin}_0 &\leq I^{\amin}_0 + \amin(c_1+c_2-r)-\amin^2-c_2(n-1-r) \\ \nonumber
    &=\frac{r^2}{2}+ \amin(c_1-c_2+r-1)+c_2(n-r-1) -r\\\nonumber
    &\leq \amin(c_1-c_2) +c_2 n + r\left(\amin - c_2 + \frac{r}{2} \right) \\ \label{eq:sminlower}  &\leq (\amin+r)(c_1-c_2) + nc_2 \\
\nonumber
\Smax =I^{\amax}_1 &\geq I^{\amax}_1 + \amax^2-\amax(c_1+c_2-r) + c_2(n-1-r) \\ \nonumber
    &=\frac{r^2}{2} + \amax(c_1-c_2+r+1)+c_2(n-1-r)+r \\ \nonumber
    &\geq \amax(c_1 - c_2) + c_2 n + r\left(\amax - c_2 + \frac{r}{2} \right)\\ \label{eq:smaxlower}
    &\geq \amax(c_1 - c_2) + c_2 n
\end{align}
Thus, to show $\Smin \leq \Sigma \leq \Smax$ it suffices to prove
\[ \amin +r \leq \frac{\Sigma - n c_2}{(c_1-c_2) } \leq \amax \]
From \eqref{eq:sigma_abs_bound} we know that
\[ (c_1+c_2+1) - (1-\varepsilon)\sqrt{Q} \leq \frac{\Sigma-nc_2}{(c_1-c_2)/2} \leq (c_1+c_2+1)+(1-\ep)\sqrt{Q},\] hence we will be done if we choose $\ep$ such that
\begin{align*}
2r+2\amin&\leq(c_1+c_2+1)-(1-\ep)\sqrt{Q} \;\mbox{ and}\\
2\amax &\geq (c_1+c_2+1)+(1-\ep)\sqrt{Q}
\end{align*}
We know that \(\amax\) is the largest integer solution of \eqref{eq:newxbound} hence
\[\amax \geq \frac{(c_2+c_1-r)+\sqrt{(c_2+c_1-r)^2-4c_2(n-r-1)-4r}}{2} - 1\] so
\begin{align*}
     2\amax - (c_1+c_2+1) &\geq \sqrt{(c_2+c_1-r)^2-4(c_2(n-r-1)+r)} - (r+3)\\
     &= \sqrt{ (c_1-c_2 -r)^2 - 4c_2(n-c_1-1)-4r} - (r+3)
\end{align*}
Similarly, $\amin$ is the smallest integer solution of \eqref{eq:newxbound}, so
\[\amin \leq \frac{(c_2+c_1-r)-\sqrt{(c_2+c_1-r)^2-4c_2(n-r-1)-4r}}{2} + 1\] so
\begin{align*}
     2r + 2\amin - (c_1+c_2+1) &\leq -\sqrt{(c_2+c_1-r)^2-4(c_2(n-r-1)+r)} + (r+1)\\
     &= -\sqrt{ (c_1-c_2 -r)^2 - 4c_2(n-c_1-1)-4r} + (r+1)
\end{align*}
To simplify notation, it will be convenient to introduce:
\[ Q(r) := (c_1-c_2-r)^2 - 4c_2(n-1-c_1)+4r.\]
Now we simply set
\begin{equation}\label{eq:epsilon}
\varepsilon = 1 -\frac{\sqrt{Q(r)}-r-3}{\sqrt{Q}}.
\end{equation}
This is equivalent to
\[ (1-\ep)\sqrt{Q} = \sqrt{Q(r)} -(r+3).\]
Putting everything together, for $\Sigma$ satisfying \eqref{eq:sigma_abs_bound} we can combine all the above steps and get
\begin{align*}
\frac{\Smin - c_2 n}{(c_1-c_2)/2} - (c_1+c_2+1)&\leq 2r+\amin -(c_1+c_2+1)\\ &\leq r+1 - \sqrt{Q(r)}\\ &\leq
-(1-\varepsilon)\sqrt{Q}  \\ &\leq\frac{\Sigma - n c_2}{(c_1-c_2)/2}-(c_1+c_2+1) \\ &\leq (1-\varepsilon)\cdot \sqrt{Q} \\ &= \sqrt{Q(r)}-(r+3)\\ &\leq 2\amax - (c_1+c_2+1) \leq \frac{\Smax - n c_2}{(c_1-c_2)/2}-(c_1+c_2+1)
\end{align*}
and hence $\Smin \leq \Sigma \leq \Smax$

The only task left is to bound $\varepsilon$. To that end, let us calculate
\begin{align*}\sqrt{Q}-(\sqrt{Q(r)}-r-3) &=r+3+ \frac{Q-Q(r)}{\sqrt{Q}+\sqrt{Q(r)}}\\
&= r+3+\frac{(r+1)(2c_1-2c_2+1-r)-4r}{\sqrt{Q}+\sqrt{Q(r)}}\\
&\leq (r+3) \cdot \left(\frac{\sqrt{Q}+2c_1-2c_2+1-r}{\sqrt{Q}}\right) \\
&\leq (r+3)\cdot \left(\frac{3c_1 - 3c_2 +2 -r}{\sqrt{Q}}\right)\\
&\leq 3(r+3) \cdot \frac{(c_1-c_2)}{\sqrt{Q}},
\end{align*}
where we used $\sqrt{Q} \leq c_1 - c_2 +1$ along with some rather crude estimates. Recall that our assumption was $(1-\beta)(c_1-c_2+1)^2 \geq 4c_2(n-1-c_1)$, equivalently $Q > \beta (c_1-c_2+1)^2$. Using this assumption on $Q$ we see that
\[ \varepsilon = \frac{\sqrt{Q}-\sqrt{Q(r)}+r+3}{\sqrt{Q}} \leq 3(r+3) \frac{c_1-c_2}{Q} \leq 3(r+3) \frac{c_1-c_2}{\beta (c_1-c_2+1)^2} \leq \frac{3(r+3)}{\beta(c_1-c_2)}\]
as claimed.
\end{proof}

\section{Future directions}
Infinite $P$-stable degree sequence sets are known to exist for both bipartite and directed graphs. Moreover, for these classes, the associated switch Markov chains are known to be rapidly mixing (see [3]). Nevertheless, our understanding of such infinite sets remains considerably less complete than in the simple (undirected) case, although the known examples display notable structural analogies. Consequently, we intend to extend our investigation to the bipartite and directed settings, where we expect to find analogous relationships between $P$-stable sets and “fully graphic” sets.

\bibliographystyle{plain}

\end{document}